\newtheorem{theorem}{Theorem}[section]
\newtheorem{Luna Slice Theorem}[theorem]{Luna Slice Theorem}
\newtheorem{corollary}[theorem]{Corollary}
\newtheorem{lemma}[theorem]{Lemma}
\newtheorem{proposition}[theorem]{Proposition}
\newtheorem{proposition-definition}[theorem]{Proposition-Definition}
\theoremstyle{definition}
\newtheorem{definition}[theorem]{Definition}
\theoremstyle{remark}
\newtheorem{remark}[theorem]{Remark}
\newcommand{\fatdot}{{\scriptscriptstyle \bullet}}
\newcommand{\rk}{\operatorname{rk}\nolimits}
\newcommand{\GL} {\operatorname{GL}\nolimits}
\newcommand{\End} {\operatorname{End}\nolimits}
\newcommand{\Def} {\operatorname{Def}\nolimits}
\newcommand{\Art} {\operatorname{Art}\nolimits}
\newcommand{\Set} {\operatorname{Set}\nolimits}
\newcommand\CC{{\mathbb C}}
\newcommand\HH{{\mathbb H}}
\newcommand{\RRR}{{\mathcal R}}
\newcommand{\BBB}{{\mathcal B}}
\newcommand{\GGG}{{\mathcal G}}
\newcommand{\LLL}{{\mathcal L}}
\newcommand{\DDD}{{\mathcal D}}
\newcommand{\FFF}{{\mathcal F}}
\newcommand{\KKK}{{\mathcal K}}
\newcommand{\AAA}{{\mathcal A}}
\newcommand{\MMM}{{\mathcal M}}
\newcommand{\CCC}{{\mathcal C}}
\newcommand{\OOO}{{\mathcal O}}
\newcommand{\EEE}{{\mathcal E}}
\renewcommand\phi{\varphi}
\newcommand{\At}{\operatorname{At}\nolimits}
\newcommand{\ud}{\mathrm{d}}
\newcommand{\ad}{\operatorname{ad}\nolimits}
\newcommand{\Spec}{\operatorname{Spec}\nolimits}
\newcommand{\ob}{\operatorname{ob}\nolimits}
\newcommand{\im}{\operatorname{im}\nolimits}
\newcommand{\id}{\operatorname{id}\nolimits}
\newcommand{\pr}{\operatorname{pr}\nolimits}
\newcommand{\res}{\operatorname{res}\nolimits}
\newcommand{\Res}{\operatorname{Res}\nolimits}
\newcommand{\Sym}{\operatorname{Sym}\nolimits}
\newcommand{\Tr}{\operatorname{Tr}\nolimits}
\newcommand{\Ext}{\operatorname{Ext}\nolimits}
\newcommand\lra{{\longrightarrow}}
\newcommand\ra{{\rightarrow}}
\newcommand\rar{{\rightarrow}}
\newcommand{\mapor}[1]{{\stackrel{#1}{\longrightarrow}}}
\newcommand{\mapver}[1]{\Big\downarrow\vcenter{\rlap{$\scriptstyle#1$}}}
\begin{document}
\title{Kuranishi Spaces of meromorphic connections}
\author{Francois-Xavier Machu}
\address{F-X. M.: Department of Mathematical and Statistical Sciences,
632 CAB, University of Alberta, Edmonton, AB T6G 2G1, Canada}
\email{machu@ualberta.ca}
\thanks{Partially supported by grant FWF-AP19667}

\bigskip
\bigskip
\begin{abstract}
We construct the Kuranishi spaces, or in other words, the versal deformations, for the following classes of connections 
with fixed divisor of poles $D$: all such connections, as well as for its subclasses of integrable,
integrable logarithmic and integrable logarithmic connections with a parabolic structure 
over $D$ . The tangent and obstruction spaces of deformation theory are defined as the hypercohomology of an appropriate
complex of sheaves, and the Kuranishi space is a fiber of the formal obstruction map.  
\end{abstract}
\maketitle
\vspace{1 ex}
\begin{center}\begin{minipage}{110mm}\footnotesize{\bf Key words:} connections, deformations, hypercohomology, Kuranishi spaces
obstructions.
\end{minipage}
\end{center}

\begin{center}\begin{minipage}{110mm}\footnotesize{\bf MSC2000:} 14B12, 14F05, 14F40, 14H60, 32G08.

\end{minipage}
\end{center}
\vspace{1 ex}

\section*{Introduction}
We construct the Kuranishi space, or in other words, the versal deformation, of connections
belonging to each one of the following classes:\\
meromorphic connections with fixed divisor of poles $D$;\\
integrable meromorphic connections with fixed divisor of poles $D$;\\
integrable logarithmic connections with fixed divisor of poles $D$;\\
integrable logarithmic connections on curves with parabolic structure at singular points.

The interest in versal deformations is twofold.
First, a versal deformation is a kind of a local moduli space which exists in a much wider
range of situations than the moduli spaces in the proper sense do. Second, versal deformations are usually easier to
write down than the moduli spaces, and one can use the versal deformation to determine the germ of the moduli space
up  to analytic, formal or \'etale equivalence.

Historically, versal deformations were introduced for the first time in late $50$'s in the work of Kodaira and Spencer (\cite{KS-1},\cite{KS-2}), and Kuranishi (\cite{Ku-1},\cite {Ku-2}).
In the beginning, this theory was only concerned with deformations of compact complex manifolds and was viewed as a replacement
for Riemann's insight of moduli of compact complex curves in higher dimensions. But since then the theory has been significantly
formalized and extended to a much wider range of domains: singularities \cite{Ar}, \cite{Schl-2}, \cite{AGZV}, 
vector bundles and sheaves  \cite{Rim-1}, \cite{Rim-2}, \cite{Artam-1}, \cite{Artam-2},
singular complex spaces \cite{Gro}, \cite{Illu-1},\cite{Illu-2}, \cite{Pa-1},\cite{Pa-2}, and morphisms of varieties or complex spaces \cite{Fl}, \cite{Bi}, \cite{Ran-1}, \cite{Ran-2}.

Recently, many people believe that a deformation theory over a field of characteristic $0$ should be taken over
by a differential graded Lie algebra (denoted DGLA). This principle deriving from researches regarding
homotopy theory, quantization, mirror symmetry, etc. (see, for instance, \cite{Kon}).
One prototype example to this principle is the deformation theory of compact complex manifold
via Maurer-Cartan equation on the vector field valued $(0,1)$ forms. This is the Newlander-Nirenberg
theorem (or rather Kuranishi's proof of the existence of the Kuranishi space). If we restrict to infinitesimal deformations, we can describe the situation as a bijection between
\bigskip

$\frac{\{\textrm{Maurer-Cartan solutions in $KS^1_{X}\otimes m_{A}$\}}}{{\textrm{gauge equivalence}}}$ $\simeq$ $\frac{\{\textrm{ deformations of $X$ on $A$\}}}{\textrm{isomorphisms}}$, where $A$ is a local artinian $\CC$-algebra and $KS^{\fatdot}_X=(A_X^{0,\fatdot}(\Theta_X),\partial,[-,-])$
the Kodaira-Spencer algebra on $X$. This isomorphism  is functorial in $A$. The left-hand side is the deformation functor
associated to the Kodaira-Spencer DGLA $KS_X^{\fatdot}$, denoted by $\Def_{KS_{X}}$, and the right-hand side is the usual
deformation functor $\Def_X$ of $X$.

All the constructions are enclosed in the paradigm of the Kuranishi space associated to a "good" deformation theory.   
A "good" deformation theory for some type of object $X$ consists in determining a triple 
$(T^1_X,T^2_X,f)$, where $T^1_X$ is the tangent space to deformations of $X$, $T^2_X$ is the obstruction space,
$f:\hat{T^1_X}\rar\hat{T^2_X}$ a formal map without linear terms, called the Kuranishi map (\ $\hat{}$ denotes the formal completion
at zero). Then the formal scheme $f^{-1}(0)$ is the Kuranishi space, or a formal germ of the versal deformation
of $X$.

We provide the triples $(T^1_X,T^2_X,f)$ for the above four classes of connections. In all the $4$ cases,
$T^i_X=\HH^i(\CCC^{\fatdot})$, the hypercohomology of an appropriate complex of sheaves, and the initial 
component $f_2$ of $f$ is the Yoneda square map. For instance, in the case $X=(\EEE,\nabla)$ is a meromorphic connection
with fixed divisor of poles $D$, the complex $\CCC^{\fatdot}$ is a two-term one and is
$$\CCC^{\fatdot}=[\mathcal{E}nd(\EEE)\xymatrix@1{\ar[r]^{\nabla}&}\mathcal{E}nd(\EEE)\otimes\Omega^1(D)].$$

A similar situation occurs in the deformation theory of Higgs bundles or Hitchin pairs
\cite{B-R}, where 
$T^1_X=\HH^1(\CCC^{\fatdot})$ with complex
\begin{equation*} \CCC^{\fatdot}=[\mathcal{E}nd(\EEE)\xymatrix@1{\ar[r]^{\ad\phi}&}\mathcal{E}nd(\EEE)\otimes\Omega^1(D)]\end{equation*} defined by the Higgs field $\phi:\EEE\rar\EEE\otimes\Omega^1(D)$;
contrary to our case, $\ad\phi$ is $\OOO_X$-linear.

Let $X$ be a complete scheme of finite type over $k$ or a compact complex space (then $k=\CC$). 
The existence of a versal deformation and the theoretical approach to its construction
are known for coherent sheaves on $X$. The construction of the Kuranishi space ($=$
versal deformation) for coherent sheaves is done in using the injective resolutions. We are studying vector bundles $\EEE$ with
an additional structure (a connection $\nabla$), and in this case the deformation theory
of both $\EEE$ and $(\EEE,\nabla)$ can be stated in terms of the \v Cech cohomology of a sufficiently fine open covering of $X$. This approach is easier than the one via injective resolutions. We start by the construction of the Kuranishi
space of vector bundles serving as a model for that of the pairs $(\EEE,\nabla)$.
This is done in Sect. \ref{Kvb}, where it is also explained how the versal deformations can be used to construct
analytic moduli spaces of simple vector bundles. In Sect.  \ref{Connections}, we introduce connections with fixed divisor of poles and show
that their isomorphism classes of first order deformations are classified by the hypercohomology $\HH^1(\CCC^{\fatdot})$ of
some two-term complex of sheaves. In Sect.  \ref{Obstruc}, we show that the first obstruction to lifting the first order deformation
is given by the Yoneda square and construct the Kuranishi space. We also define several versions of the Atiyah class. In Sect.  \ref{Int connec},
we describe the construction of the Kuranishi space for integrable and integrable logarithmic connections.
The last Sect.  \ref{Par connec} treats the Kuranishi space of parabolic connections. 

\subsection{Deformation theory}
In this section, we follow \cite{Ma}, and \cite{H-L} to remind the framework
of the deformation theory.

Let $\boldsymbol{\Art}$ be the category of local artinian $\CC$-algebra $A$ such that $A/m_A\simeq\CC$, where $m_A$
is the maximal ideal of $A$. We mean by a functor of artinian rings \cite{Schl-1} a covariant functor

$D:\boldsymbol{\Art}\rar\boldsymbol{\Set}$ such that $D(\CC)$ is the one-point set.
The tangent space $T_D$ to a functor of artinian rings $D$ is defined by
$T_D=D(\CC[\epsilon])$, where $\CC[\epsilon]$ is the ring of dual numbers $\CC[x]/(x^2)$.

Let $A$, $B$, $C$ be local artinian $\CC$-algebras and
$\eta:D(B\times_{A} C)\rar D(B)\times_{D(A)} D(C)$ be the natural map.
We call a functor of artinian rings $D$ a deformation functor if it satisfies
$(i)$ if $B\rar A$ is onto, so is $\eta$, and
$(ii)$ if $A=\CC$, $\eta$ is bijective \cite{Ma}(Definition 2.5).
Note that these conditions are closely related to Schlessinger's criterion of existence of a hull
(see Remark to Definition 2.7 in \cite{F-M}).

An obstruction theory of a functor of artinian rings $D$ is a pair $(U,\ob(-))$, consisting of a finite dimensional $\CC$-vector space $U$, the obstruction space, and a map $\ob(\alpha):D(A')\rar U\otimes a$, the obstruction map such that for any small extension $$\alpha: 0\ra a\ra A\ra A'\ra 0,$$ with kernel $a$ such that $m_Aa=0$,  the following conditions are satisfied:\\
$1$. If $x'\in D(A')$ lifts to $D(A)$, then $\ob(\alpha)(x')=0$.\\
$2$. For any morphism $\phi$ of small extensions
\[ \begin{array}{cccccccccc}
\alpha_1\colon\quad&0&\mapor{}&a_1&\mapor{}&A_1&\mapor{}&A'_1&\mapor{}&0\\
&&&\mapver{\phi_a}&&\mapver{\phi}&&\mapver{\phi'}&&\\
\alpha_2\colon\quad&0&\mapor{}&a_2&\mapor{}&A_2&\mapor{}&A'_2&\mapor{}&0,\end{array}
\]
we have the compatibility $\ob(\alpha_2)(\phi_{*}'(x'))=(\id_U\otimes\phi_a)(\ob(\alpha_1)(x')),$ for every $x'\in D(A'_1)$.
Moreover, if $\ob(\alpha)(x')=0$ implies the existence of a lifting of $x'$ to $D(A)$, the obstruction is called complete.

In the sequel, we always assume that $k$ is an algebraically closed field or $k=\CC$. 
For instance, if $X$ is a smooth projective variety
over $k$, and let $F$ be a coherent $\OOO_X$-module which is simple. If $A\in\boldsymbol{\Art/k}$, let
$D_F(A)$ be the set of isomorphim classes of pairs $(F_A,\phi)$ where $F_A$ is a flat family
of coherent sheaves on $X$ parameterized by $\Spec(A)$ and $\phi:F_A\otimes_A k\rar F$ is an isomorphism
of $\OOO_X$-modules.
Following \cite{H-L}, the map $D_F(\alpha):D_F(A)\rar D_F(A')$ has for fibers affine spaces with
affine group $\Ext^1(F,F)\otimes_k a$, and the image of $D_F(\alpha)$ lies in the kernel of
the obstruction map $\ob(\alpha):D_F(A')\rar\Ext^2(F,F)\otimes_k a$.
\begin{proposition}(See[\cite{Ma}, Proposition 2.17].)  
Let $D_1$ and $D_2$ be deformation functors and $\phi:D_1\rar D_2$ a morphism
of functors, $(V_1, \ob_{D_1})$ and $(V_2, \ob_{D_2})$ obstruction theories for $D_1$ and $D_2$,
respectively. Assume that \\
$(i)$ $\phi$ induces a surjection (resp. bijection) on the tangent spaces $T_{D_1}\rar T_{D_2}$.\\
$(ii)$ There is an injective linear map between obstruction spaces $\Phi:V_1\rar V_2$ such that $\ob_{D_2}\circ\phi=\Phi\circ\ob_{D_1}$.\\
$(iii)$ The obstruction theory $(V_1, ob_{D_1})$ is complete.\\
Then, the morphism $\phi$ is smooth (resp \'etale).
\end{proposition}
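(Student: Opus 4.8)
The plan is to prove that $\phi$ is \emph{smooth} as a morphism of functors of artinian rings, i.e. that for every surjection $A\ra A'$ the comparison map $D_1(A)\ra D_1(A')\times_{D_2(A')}D_2(A)$ is surjective; since any surjection in $\boldsymbol{\Art}$ is a finite composite of small extensions and smoothness is stable under composition, it is enough to treat a single small extension
\[
\alpha\colon\quad 0\ra a\ra A\ra A'\ra 0 .
\]
For the parenthetical (\'etale) statement I will use that an \'etale morphism is by definition a smooth one inducing a bijection on tangent spaces, so once smoothness is in hand the bijectivity asserted in $(i)$ immediately upgrades the conclusion.

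First I fix $\alpha$ and a pair $(x',y)$ in the fibre product, that is $x'\in D_1(A')$ and $y\in D_2(A)$ with $\phi(x')$ equal to the image of $y$ in $D_2(A')$. The first key step is an obstruction computation. Since $y$ lifts $\phi(x')$ from $D_2(A')$ to $D_2(A)$, property $1$ of the obstruction theory $(V_2,\ob_{D_2})$ gives $\ob_{D_2}(\alpha)(\phi(x'))=0$. Plugging this into the compatibility $(ii)$, namely $\ob_{D_2}(\alpha)\circ\phi=(\Phi\otimes\id_a)\circ\ob_{D_1}(\alpha)$, I obtain $(\Phi\otimes\id_a)(\ob_{D_1}(\alpha)(x'))=0$. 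Because $\Phi$ is $\CC$-linear and injective and $-\otimes_{\CC}a$ is exact, $\Phi\otimes\id_a$ is injective, so $\ob_{D_1}(\alpha)(x')=0$. Now the completeness hypothesis $(iii)$ on $(V_1,\ob_{D_1})$ produces an actual lift $\tilde x\in D_1(A)$ of $x'$.

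This $\tilde x$ solves half the problem; it remains to arrange that it also maps to $y$, which is where I use the tangent action. I will first recall the standard fact, valid for any deformation functor $D$ and obtained from the axioms $(i)$--$(ii)$ via the ring isomorphism $A\times_{A'}A\cong A\oplus a$ (with $a$ a square-zero summand), that $T_D\otimes a$ acts on $D(A)$ transitively along the fibres of $D(A)\ra D(A')$, and that this action is natural in $D$: for a morphism of functors one has $\phi(\xi+v)=\phi(\xi)+(T\phi\otimes\id_a)(v)$, where $T\phi\colon T_{D_1}\ra T_{D_2}$ is the induced tangent map. Both $\phi(\tilde x)$ and $y$ lie over $\phi(x')$, so transitivity in $D_2$ gives $w\in T_{D_2}\otimes a$ with $y=\phi(\tilde x)+w$. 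By $(i)$ the map $T\phi$, hence $T\phi\otimes\id_a$, is surjective, so $w=(T\phi\otimes\id_a)(v)$ for some $v\in T_{D_1}\otimes a$. Setting $x=\tilde x+v$, the fibre-preserving nature of the action shows $x$ still lifts $x'$, while naturality gives $\phi(x)=\phi(\tilde x)+w=y$. Hence $(x',y)$ is in the image, $\phi$ is smooth, and in the bijective case of $(i)$ it is \'etale.

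The genuinely delicate point is not the obstruction bookkeeping but the construction and naturality of the $T_D\otimes a$-action invoked in the last paragraph: this is where the deformation-functor axioms are actually spent, and one must check both that the action is transitive on fibres (so that $\phi(\tilde x)$ and $y$ really differ by a tangent vector) and that it is compatible with $\phi$ through $T\phi$. By contrast, the injectivity of $\Phi\otimes\id_a$ and the formal properties of a complete obstruction theory are routine.
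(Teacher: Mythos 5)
Your proof is correct. Note that the paper itself does not prove this proposition at all; it merely quotes it with a pointer to [Ma, Proposition~2.17], and your argument is precisely the standard one found there: kill the obstruction of $x'$ via the injectivity of $\Phi\otimes\id_a$ and completeness of $(V_1,\ob_{D_1})$ to get some lift $\tilde x$, then use the transitive, $\phi$-equivariant action of $T_{D}\otimes a$ on the fibres of $D(A)\ra D(A')$ (coming from $A\times_{A'}A\cong A\oplus a$ and the deformation-functor axioms) together with surjectivity of $T_{D_1}\ra T_{D_2}$ to correct $\tilde x$ so that it also hits $y$. You are also right to flag the construction and naturality of that tangent action as the only point where the axioms are genuinely spent.
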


\section{Construction of the Kuranishi space in the case of vector bundles over any base.}\label{Kvb}

Let $X$ be a complete scheme of finite type over $k$ or a complex space (then $k=\CC$), $\mathfrak U=(U_{\alpha})$ be an open covering of $X$,
$e_{\alpha}$ a trivialization of $\EEE_{\rvert{U_{\alpha}}}$.
The transition functions $g_{\alpha \beta}$ relate the trivializations by the formula $e_{\beta}=e_{\alpha} g_{\alpha \beta}$ over $U_{\alpha \beta}=U_{\alpha}\cap U_{\beta}$ and satisfy the following relations
\begin{equation}\label{equation}
g_{\alpha \beta}=g^{-1}_{\beta \alpha},\ g_{\alpha \beta}g_{\beta \gamma}g_{\gamma \alpha}=1.
\end{equation}
In other words, $(g_{\alpha \beta})\in\check{C}^{1}(\mathfrak U, \GL(r, \OOO_X))$ is a skew-symmetric multiplicative $1$-cocycle.

\subsection{Construction of the Kuranishi space in the case of simple vector bundles over any base}
\begin{definition}
A vector bundle $\EEE$ on $X$ is simple if and only if $H^0(X,\mathcal{E}nd(\EEE))= k\id$.
\end{definition}
In the case of a simple vector bundle, the versal deformation is in fact universal and this is a local version of the moduli space:
\begin{proposition}\label{svb}
Let $\EEE$ be a simple vector bundle on a scheme $X$ of finite type on $k$ or a complex space (in which case $k=\CC$). Then there exists an analytic  space $M(\EEE)$ with a reference point $*$ and a vector bundle $E$ on $X\times M(\EEE)$ which satisfy the following properties:\\ 
$(1)$ $E\vert_{X\times *}\simeq \EEE$.\\
$(2)$ If $T$ is an analytic space with a reference point $*$ and $E'$ a vector bundle on $X\times T$ such that
$E'\lvert_{X\times *}\simeq\EEE$, then there is a holomorphic mapping $\Phi:T\rar M(\EEE)$ such that $\Phi(*)=*$ and
$E'\simeq(1\times \Phi)^{*}(E)$.\\ 
$(3)$ The above mapping $\Phi$ is unique as a germ of a holomorphic mapping from $(T, *)$ to $(M(\EEE), *).$ 
$(M(\EEE), *)$ and $E$ are called the Kuranishi space and the Kuranishi family of $\EEE$, respectively.
\end{proposition}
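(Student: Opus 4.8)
The plan is to realize $M(\EEE)$ as the fiber over $0$ of a Kuranishi map constructed from the \v{C}ech description of deformed transition functions, and then to extract the universal family and its universal property from that construction, with simplicity of $\EEE$ supplying the uniqueness in $(3)$. First I would fix a sufficiently fine Stein (resp.\ affine) covering $\mathfrak U=(U_\alpha)$ on which $\EEE$ is trivialized, so that $\EEE$ is encoded by the cocycle $(g_{\alpha\beta})\in\check C^1(\mathfrak U,\GL(r,\OOO_X))$ of \eqref{equation}. A deformation of $\EEE$ over a pointed base $(T,*)$ is then a deformed cocycle $(g_{\alpha\beta}(t))$ reducing to $(g_{\alpha\beta})$ at $*$ and still satisfying the relations \eqref{equation}. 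Writing $g_{\alpha\beta}(t)=g_{\alpha\beta}\exp(A_{\alpha\beta}(t))$ with $A_{\alpha\beta}(t)$ valued in $\mathcal{E}nd(\EEE)$ and vanishing at $*$, the relation $g_{\alpha\beta}(t)g_{\beta\gamma}(t)g_{\gamma\alpha}(t)=1$ expanded to first order forces the linear term $a=(a_{\alpha\beta})$ to lie in $Z^1(\mathfrak U,\mathcal{E}nd(\EEE))$, while changing the trivializations $e_\alpha$ modifies $a$ by a coboundary; hence first-order deformations are classified by $T^1_{\EEE}=H^1(X,\mathcal{E}nd(\EEE))$, in accordance with the paradigm of the Introduction. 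Solving the relations order by order, the obstruction to extending an $n$-th order solution to order $n+1$ is a well-defined class in $T^2_{\EEE}=H^2(X,\mathcal{E}nd(\EEE))$ whose quadratic leading part is the Yoneda (cup-product) square, and assembling these obstructions yields the formal Kuranishi map $f:\hat T^1_{\EEE}\to\hat T^2_{\EEE}$ without linear term. Properness (resp.\ compactness) of $X$ guarantees that $T^1_{\EEE}$ and $T^2_{\EEE}$ are finite dimensional, so $f$ is a germ of a map between finite-dimensional spaces.

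Next I would define $(M(\EEE),*)$ as the germ $f^{-1}(0)$ and build the universal cocycle $(g_{\alpha\beta}(t))$ over it. The point requiring genuine work is \emph{convergence}: the recursively constructed series $A_{\alpha\beta}(t)$ and the map $f$ a priori live in formal completions, and to obtain an honest analytic space one must produce convergent representatives. This is the analytic heart of Kuranishi's method; I would invoke the standard elliptic/Hodge-theoretic estimates (choosing a harmonic gauge via a Hermitian metric and the associated Laplacian, as in the Forster--Knorr treatment of deformations of vector bundles) to guarantee that the series converge on a neighbourhood of $*$, so that $(g_{\alpha\beta}(t))_{t\in M(\EEE)}$ glues to a genuine vector bundle $E$ on $X\times M(\EEE)$. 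Property $(1)$ is then immediate, since specializing to $t=*$ returns the cocycle $(g_{\alpha\beta})$, i.e.\ $E\vert_{X\times *}\simeq\EEE$.

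For versality $(2)$, given $(T,*)$ and $E'$ with $E'\vert_{X\times *}\simeq\EEE$, I would pull $E'$ back to the covering, present it by a deformed cocycle, and match it against the universal one: solving the defining relations formally expresses the parameters of $E'$ as power series in the coordinates on $(M(\EEE),*)$, defining a germ $\Phi:(T,*)\to(M(\EEE),*)$ with $E'\simeq(1\times\Phi)^*E$, and convergence of $\Phi$ follows from the same estimates. The crux of the statement is the uniqueness $(3)$, and this is exactly where the hypothesis $H^0(X,\mathcal{E}nd(\EEE))=k\,\id$ enters. Simplicity forces $\Aut(\EEE)=k^{*}$, so the only gauge ambiguity in a deformed cocycle is by scalars, which act trivially on $\mathcal{E}nd(\EEE)$ and hence do not move the classifying data; consequently the differential of $\Phi$ at $*$ is pinned down as the Kodaira--Spencer map $T_{*}T\to T^1_{\EEE}$, and an inductive comparison order by order shows that any two choices of $\Phi$ agree as germs. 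I expect this gauge-rigidity argument, together with the convergence step, to be the main obstacle: once simplicity eliminates the nonscalar automorphisms, the versal deformation becomes automatically universal, which is precisely the uniqueness asserted in $(3)$.
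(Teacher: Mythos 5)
The paper offers no argument of its own for Proposition \ref{svb}: its ``proof'' is the single line ``See \cite{Mu-1}''. So there is no internal argument to compare against; what your sketch does is reconstruct, in outline, the standard Kuranishi-type construction that the citation stands for, and the architecture you describe (\v Cech presentation of deformed cocycles, $T^1=H^1(X,\mathcal{E}nd(\EEE))$, obstructions in $H^2(X,\mathcal{E}nd(\EEE))$ with quadratic leading term the Yoneda square, $M(\EEE)=f^{-1}(0)$) is exactly what the paper itself develops at the purely formal level in Sections \ref{Kvb} and \ref{Obstruc}. The two points where the actual content of the Proposition lives are, however, only invoked rather than carried out. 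First, convergence: the paper's own Theorem \ref{theorem 1} produces only the formal Kuranishi space $\mathbb{K}=\underleftarrow{\lim}K_k$, and the passage to an honest analytic germ $(M(\EEE),*)$ carrying a genuine bundle $E$ is precisely the analytic work of \cite{Mu-1}; you correctly name harmonic-gauge elliptic estimates as the tool but do not run them. Second, completeness: that an \emph{arbitrary} deformation $E'$ over $(T,*)$, not merely a formal one, is induced by a convergent $\Phi$ again requires the estimates, not just the order-by-order matching you describe. On point $(3)$ your gauge-rigidity idea is the right one, but to close the inductive comparison of two candidate maps $\Phi$ you need slightly more than $\Aut(\EEE)=k^{*}$: you need that every infinitesimal deformation of $\EEE$ again has only scalar automorphisms (which does follow from $H^0(X,\mathcal{E}nd(\EEE))=k\id$ via the exact sequence of a small extension, since scalars always lift); without stating this the step ``any two choices of $\Phi$ agree'' is an assertion rather than an argument. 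In sum, your text is an accurate road map at essentially the same level of rigor as the paper's one-line citation, not a self-contained proof.
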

\begin{proof}
See \cite{Mu-1}.
\end{proof}
We define $SV_X$ as the set of isomorphism classes of simple vector bundles on $X$. Using Proposition \ref{svb}, we can endow it with an analytic structure so that $SV_X$ has a universal family only locally in the \'etale or classical topology. Then there exists a sufficiently small open set $U$ of $SV_X$ in the classical or \'etale topology and a vector bundle $E$ on $X\times U$ satisfiying the following property: For any analytic space $S$, there exists a functorial bijection between the sets $\{\textrm{morphisms $S\rar U$\}}$ $\rar$ $\{\textrm{vector bundles $E$ on $X\times S$ such that $\forall s\in S$, $E_s$ is simple and its class belongs to $U$\}}/\sim$ given by $\phi\mapsto(1\times\phi)^{*}(E).$
\begin{proposition}
Let $X,\EEE$ be as in Proposition \ref{svb}. 
Every obstruction to the smoothness of $SV_X$ at $[\EEE]$ lies in $\ker(H^2(\Tr):H^2(X,\mathcal{E}nd(\EEE))\rar H^2(X,\OOO_X))$.
In particular, $SV_X$ is smooth at $[\EEE]$ if $H^2(\Tr)$ is injective.
\end{proposition}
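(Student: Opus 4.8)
The plan is to identify the tangent and obstruction spaces of $SV_X$ at $[\EEE]$ and then to control the obstruction by pushing it forward along the trace. By the construction of the Kuranishi family (Proposition \ref{svb}) and the standard deformation theory of a simple coherent sheaf recalled above, the functor $D_{\EEE}$ of deformations of $\EEE$ has tangent space $T_{D_{\EEE}}=\Ext^1(\EEE,\EEE)=H^1(X,\mathcal{E}nd(\EEE))$ and a complete obstruction theory valued in $\Ext^2(\EEE,\EEE)=H^2(X,\mathcal{E}nd(\EEE))$. Thus an ``obstruction to smoothness of $SV_X$ at $[\EEE]$'' is precisely an obstruction class $\ob(\alpha)(x')\in H^2(X,\mathcal{E}nd(\EEE))$ attached to a small extension $\alpha$ and a lift problem for a deformation $x'$, and smoothness will follow once all such classes are shown to vanish.

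First I would record the shape of the leading obstruction. As indicated in the Introduction, the initial component of the Kuranishi map is the Yoneda square: a first order class represented by a \v Cech cocycle $a=(a_{\alpha\beta})\in\check Z^1(\mathfrak U,\mathcal{E}nd(\EEE))$ is obstructed by the cup product $a\cup a$, whose representative is $a_{\alpha\beta}\circ a_{\beta\gamma}$, the product being composition of endomorphisms. Applying the sheaf map $\Tr\colon\mathcal{E}nd(\EEE)\to\OOO_X$ replaces this non-commutative product by the pairing $(A,B)\mapsto\Tr(A\circ B)$, which is symmetric since $\Tr(AB)=\Tr(BA)$. Because $\OOO_X$ is a commutative sheaf of rings, the cup product on $H^1$ is graded commutative, so for a degree one class the symmetric square satisfies $H^2(\Tr)(a\cup a)=-H^2(\Tr)(a\cup a)$; in characteristic zero this forces $H^2(\Tr)(a\cup a)=0$. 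Equivalently, the trace of a commutator vanishes. Hence the leading obstruction already lies in $\ker H^2(\Tr)$.

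To obtain the statement at all orders, I would argue functorially through the determinant. The assignment $\EEE_A\mapsto\det\EEE_A$ defines a morphism of deformation functors $\det\colon D_{\EEE}\to D_{\det\EEE}$, whose effect on tangent spaces is $H^1(\Tr)$ and, correspondingly, on obstruction spaces is $H^2(\Tr)\colon H^2(X,\mathcal{E}nd(\EEE))\to H^2(X,\OOO_X)=\Ext^2(\det\EEE,\det\EEE)$. The compatibility axiom for obstruction theories then yields $H^2(\Tr)\circ\ob_{\EEE}=\ob_{\det\EEE}\circ\det$. Now $\det\EEE$ is a line bundle, so $D_{\det\EEE}$ is the local deformation functor of a point of $\Pic(X)$; over a field of characteristic zero $\Pic(X)$ is smooth, so $D_{\det\EEE}$ is unobstructed and $\ob_{\det\EEE}\equiv 0$ (the same graded commutativity argument as above shows directly that its leading obstruction $\xi\cup\xi$ vanishes for $\xi\in H^1(X,\OOO_X)$). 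Consequently $H^2(\Tr)(\ob_{\EEE}(\alpha)(x'))=0$ for every small extension and every lift problem, i.e. every obstruction lies in $\ker H^2(\Tr)$.

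Finally, the ``in particular'' is immediate: if $H^2(\Tr)$ is injective then $\ker H^2(\Tr)=0$, so every obstruction class vanishes, and completeness of the obstruction theory guarantees that each deformation lifts along each small extension. This is exactly formal smoothness of $D_{\EEE}$, hence $SV_X$ is smooth at $[\EEE]$. The step I expect to be the main obstacle is the clean verification that the determinant morphism induces precisely $H^2(\Tr)$ on obstruction spaces and is compatible with the two obstruction theories; this, together with the characteristic zero input (smoothness of $\Pic$, equivalently invertibility of $2$ and of the rank in the trace splitting $\mathcal{E}nd(\EEE)=\OOO_X\oplus\mathcal{E}nd_0(\EEE)$), is what makes the argument go through.
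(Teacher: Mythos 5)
Your proposal is essentially the argument of the source the paper relies on: the paper's own ``proof'' of this proposition is simply the citation ``See \cite{Mu-1}'', and what Mukai (and, in the trace-of-obstruction form, Artamkin \cite{Artam-1}) actually proves there is exactly your determinant/trace argument. So you have not diverged from the paper so much as supplied the argument it omits. The outline is correct: the deformation functor of a simple bundle has tangent space $H^1(X,\mathcal{E}nd(\EEE))$ and a complete obstruction theory in $H^2(X,\mathcal{E}nd(\EEE))$; the determinant gives a morphism of deformation functors to the deformations of the line bundle $\det\EEE$, inducing $H^1(\Tr)$ on tangent spaces and $H^2(\Tr)$ on obstruction spaces; and in characteristic zero the target functor is unobstructed because $\Pic(X)$ is smooth, whence every obstruction class is killed by $H^2(\Tr)$. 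Your first-order check via the symmetric pairing $(A,B)\mapsto\Tr(AB)$ and graded commutativity is correct (and is the Yoneda-square shape of the leading obstruction established in Section \ref{Kvb}), though it is subsumed by the general statement.

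Two points deserve emphasis. First, the identity $H^2(\Tr)\circ\ob_{\EEE}=\ob_{\det\EEE}\circ\det$ is not a consequence of the ``compatibility axiom'' in the definition of an obstruction theory (that axiom concerns morphisms of small extensions for a fixed functor); it is precisely hypothesis $(ii)$ of the proposition quoted from \cite{Ma} and has to be \emph{verified} by an explicit computation: if $(G_{\alpha\beta})$ is a lift of the transition cocycle modulo the kernel of the small extension, the obstruction cocycle for $\det\EEE$ is $\det(G_{\alpha\beta}G_{\beta\gamma}G_{\gamma\alpha})$, whose leading term is $1+\Tr(F_{\alpha\beta\gamma})$ in the notation of Theorem \ref{theorem 1}, which is Artamkin's lemma. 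You correctly flag this as the main gap, but as written it remains unproven, so your text is a proof sketch rather than a proof. Second, your argument genuinely uses characteristic zero (smoothness of $\Pic$, and invertibility of $2$ for the symmetric-square vanishing), whereas the standing hypotheses of the paper allow any algebraically closed field $k$; in positive characteristic the Picard scheme can be non-reduced and the statement as printed would require either a hypothesis on $k$ or a different argument. Modulo these two caveats the proposal is sound and matches the intended proof.
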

\begin{proof}
See \cite{Mu-1}.
\end{proof}

Note, however, that $SV_X$, even if it is smooth, is not a nice concept of moduli space: it is non-separated in many examples.
\\

We now treat the case of vector bundles over any base.
\subsection{First order deformations}
Deform the transition functions:
$\tilde{g}_{\alpha \beta}=g_{\alpha \beta}+\epsilon g_{\alpha \beta,1}$, where $g_{\alpha \beta,1}\in\Gamma(U_{\alpha \beta},M_r(\OOO_X))$ and $\epsilon^2=0$.
We have $g_{\alpha \beta,1}=\frac{\ud\tilde{g}_{\alpha \beta}}{\ud\epsilon}$. Differentiating (\ref{equation}), we obtain:
\begin{equation}\label{equation 1}g_{\beta \alpha,1}=\frac{\ud\tilde{g}_{\alpha \beta}^{-1}}{\ud\epsilon}=-g^{-1}_{\alpha \beta}g_{\alpha \beta,1}g^{-1}_{\alpha \beta}, \end{equation}
$$g_{\alpha \beta,1}g_{\beta \gamma}g_{\gamma \alpha}+g_{\alpha \beta}g_{\beta \gamma,1}g_{\gamma \alpha}+g_{\alpha \beta}g_{\beta \gamma}g_{\gamma \alpha,1}=0,$$
and by (\ref{equation 1}), $g_{\gamma \alpha,1}=-g^{-1}_{\alpha \gamma}g_{\alpha \gamma,1}g^{-1}_{\alpha \gamma}.$ 
Plugging this into the previous formula, we get 
$$g_{\alpha \beta,1}g_{\beta \gamma}g_{\gamma \alpha}+g_{\alpha \beta}g_{\beta \gamma,1}g_{\gamma \alpha}=g_{\alpha \beta}g_{\beta \gamma}g^{-1}_{\alpha \gamma}g_{\alpha \gamma,1}g^{-1}_{\alpha \gamma}.$$
Multiply by $g_{\alpha \gamma}$ on the right:
\begin{equation}\label{equation 2}
g_{\alpha \beta,1}g_{\beta \gamma}+g_{\alpha \beta}g_{\beta \gamma,1}=g_{\alpha \gamma,1}.
\end{equation}
We want to represent this in the form $a_{\alpha \beta}+a_{\beta \gamma}=a_{\alpha \gamma}$ for an appropriate additive $1$-cocycle
$a=(a_{\alpha \beta})\in\check{C}^{1}(\mathfrak U,\mathcal{E}nd(\EEE))$, associated with $(g_{\alpha \beta,1})$ and skew-symmetric:
$a_{\alpha \beta}=-a_{\beta \alpha}.$
Define $a_{\alpha \beta}\in\Gamma(U_{\alpha \beta}, \mathcal{E}nd(\EEE))$ by its matrix: $g^{-1}_{\alpha \beta}g_{\alpha \beta,1}$
in the basis $e_{\beta}$ and $g_{\alpha \beta,1}g^{-1}_{\alpha \beta}$ in the basis $e_{\alpha}$. Then (\ref{equation 1}) gives
$g_{\alpha \beta}g_{\beta \alpha,1}+g_{\alpha \beta,1}g^{-1}_{\alpha \beta}=0$, written in terms of matrices with
respect to the basis $e_{\alpha}$, and (\ref{equation 2}) amounts to  $a_{\alpha \beta}+a_{\beta \gamma}=a_{\alpha \gamma}.$
Thus the first order deformations of $\EEE$ are classified by the $1$-cocycles $a=(a_{\alpha \beta})\in\check{C}^{1}(\mathfrak U,\mathcal{E}nd(\EEE))$. Such a deformation is trivial if the vector bundle $\tilde{\EEE}$ defined over $X\times\Spec\CC[\epsilon]/(\epsilon^2)$ by the $1$-cocycle
$\tilde{g}_{\alpha \beta}=g_{\alpha \beta}+\epsilon g_{\alpha \beta,1}$ is isomorphic to $\pr_{1}^{*}(\EEE)$, where
$\pr_1:X\times\Spec\CC[\epsilon]/(\epsilon^2)\rar X$ is the natural projection.
This means that there exists a change of basis 
$e_{\alpha}\mapsto \tilde{e}_{\alpha}=e_{\alpha}(1+\epsilon h_{\alpha})$ which transforms $\tilde{g}_{\alpha \beta}$ into $g_{\alpha \beta}$.
We compute $\tilde{e}_{\beta}=e_{\beta}(1+\epsilon h_{\beta})=e_{\alpha} g_{\alpha \beta}(1+\epsilon h_{\beta})=\tilde{e}_{\alpha}(1-\epsilon h_{\alpha})g_{\alpha \beta}(1+\epsilon h_{\beta})$ and we want that this coincides with  $\tilde{e}_{\beta}=\tilde{e}_{\alpha}\tilde{g}_{\alpha \beta}.$
That is: $g_{\alpha \beta}+\epsilon g_{\alpha \beta,1}=(1-\epsilon h_{\alpha})g_{\alpha \beta}(1+\epsilon h_{\beta})$, or
$g_{\alpha \beta,1}=-h_{\alpha}g_{\alpha \beta}+g_{\alpha \beta}h_{\beta}$. Interpreting $h_{\alpha}$ as the matrix of $b_{\alpha}\in\Gamma(U_{\alpha}, \mathcal{E}nd(\EEE))$ with respect to the basis $e_{\alpha}$, we obtain $a_{\alpha, \beta}=-b_{\alpha}+b_{\beta}$ which
is written in the basis $e_{\alpha}$ in the form $g_{\alpha \beta,1}g^{-1}_{\alpha \beta}=-h_{\alpha}+g_{\alpha \beta}h_{\beta}g^{-1}_{\alpha \beta}.$
Thus the equivalence classes of first order deformations of $\EEE$ over $V=\Spec\CC[\epsilon]/(\epsilon^2)$ are classified by 
$$\check{H}^{1}(\mathfrak U, \mathcal{E}nd(\EEE))=\frac{\{\textrm{$1$-cocycles $(a_{\alpha \beta})\in \check{C}^{1}(\mathfrak U,\mathcal{E}nd(\EEE))$}\}}{\{\textrm{coboundaries $a_{\alpha \beta}=b_{\beta}-b_{\alpha}$, where $(b_{\alpha})\in\check{C}^{0}(\mathfrak U,\mathcal{E}nd(\EEE))$}\}}.$$
\subsection{First obstruction}
We denote $V_k=\Spec \CC[\epsilon]/(\epsilon)^{k+1}$. We will investigate the following question: which of the deformations of $\EEE$ over $V_1$ lift to $V_2$?\\
Let $G_{\alpha \beta}=g_{\alpha \beta,0}+\epsilon g_{\alpha \beta,1}+\epsilon^2 g_{\alpha \beta,2}$ be a deformation of the
cocycle $g_{\alpha \beta}=g_{\alpha \beta,0}$ over $V_2$.\\
We want to prove, in other words that $G_{\alpha \beta}$ gives a valid $2$nd-order deformation if and only if it satisfies
the cocycle condition.

Assume that $G_{\alpha \beta}\mod\epsilon^2$ is a $1$-cocycle, then (\ref{equation 1}) and (\ref{equation 2}) are verified, and compute the coefficient $K_{\alpha\beta\gamma,2}$ of $\epsilon^2$ in $G_{\alpha \beta}G_{\beta \gamma}G_{\gamma \alpha}$, which will be denoted $K_{\alpha \beta \gamma,2}$:\begin{eqnarray}\label{eqnarray g}
K_{\alpha \beta \gamma,2}=g_{\alpha \beta,0}g_{\beta \gamma,1}g_{\gamma \alpha,1}+g_{\alpha \beta,1}g_{\beta \gamma,0}g_{\gamma \alpha,1}+g_{\alpha \beta,1}g_{\beta \gamma,1}g_{\gamma \alpha,0} \\ \nonumber+g_{\alpha \beta,2}g_{\beta \gamma,0}g_{\gamma \alpha,0}+g_{\alpha \beta,0}g_{\beta \gamma,2}g_{\gamma \alpha,0}+g_{\alpha \beta,0}g_{\beta \gamma,0}g_{\gamma \alpha,2}\end{eqnarray}
Similar to the above, introduce the sections $a_{\alpha \beta,i}$, $(i=1, 2)$ of the endomorphism sheaf $\mathcal{E}nd(\EEE_{\rvert(U_{\alpha \beta})})$ having $g_{\alpha \beta,i}g^{-1}_{\alpha \beta}$ for their matrices in the bases $e_{\alpha}$. Then, as above, $g_{\alpha \beta,2}g_{\beta \gamma,0}g_{\gamma \alpha,0}+g_{\alpha \beta,0}g_{\beta \gamma,2}g_{\gamma \alpha,0}+g_{\alpha \beta,0}g_{\beta \gamma,0}g_{\gamma \alpha,2}$ is the matrix of $a_{\alpha \beta,2}+a_{\beta \gamma,2}+a_{\gamma \alpha,2}$ in the basis $e_{\alpha}$, and
$g_{\alpha \beta,0}g_{\beta \gamma,1}g_{\gamma \alpha,1}+g_{\alpha \beta,1}g_{\beta \gamma,0}g_{\gamma \alpha,1}+g_{\alpha \beta,1}g_{\beta \gamma,1}g_{\gamma \alpha,0}$ is the matrix of \begin{equation}\label{equation 3}
a_{\beta \gamma,1}a_{\gamma \alpha,1}+a_{\alpha \beta,1}a_{\gamma \alpha,1}+a_{\alpha \beta,1}a_{\beta \gamma,1} \end{equation} in the basis $e_{\alpha}$.
Let $a_1$ denote the cocycle $(a_{\alpha \beta,1})$ and $[a_1]$ its class in $\check{H}^{1}(\mathfrak U, \mathcal{E}nd(\EEE))$. Then 
$a_{\beta \gamma,1}a_{\gamma \alpha,1}=c_{\beta \gamma \alpha}$ represents the Yoneda product $[a_1]\circ[a_1]=[c]\in \check{H}^{2}(\mathfrak U, \mathcal{E}nd(\EEE))$; see for instance $10.1.1.$ of \cite{H-L} for the definition of the Yoneda product
$$\check{H}^{i}(\mathfrak U, \mathcal{E}nd(\EEE))\times\check{H}^{j}(\mathfrak U, \mathcal{E}nd(\EEE))\rar\check{H}^{i+j}(\mathfrak U, \mathcal{E}nd(\EEE)).$$
The whole expression (\ref{equation 3}) is the skew-symmetrization $\hat{c}_{\alpha \beta \gamma}$ of $c_{\beta \gamma \alpha}$, hence
it represents the same cohomology class $[c]$. Let also $a_2$ denote the \v Cech cochain $(a_{\alpha \beta,2})$. 
We can rewrite $K_{2}=(K_{\alpha \beta \gamma,2})$ in the form \begin{equation}\label{equation a}K_{2}=\hat{c}+\check{d}a_2.\end{equation}
We now see that we can find $a_2$ in such a way that $(G_{\alpha \beta})$ is a cocycle over $V_2$ if and
only if $\hat{c}$ is $\check{d}$-exact. We have proved:
\begin{proposition}\label{proposition 1}
Let $X$ be a complete scheme of finite type over $k$ or a complex space (and then $k=\CC$), $\EEE$ a vector bundle on $X$, $[a]\in H^{1}(X, \mathcal{E}nd(\EEE))$. Then the first order deformation of $\EEE$ over $V_1$ defined by $[a]$ lifts to a deformation over $V_2$ if and only if the Yoneda square $[a]\circ [a]$ is zero in $H^{2}(X, \mathcal{E}nd(\EEE))$.
\end{proposition}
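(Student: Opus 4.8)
The plan is to read ``the first-order deformation defined by $[a]$ lifts to $V_2$'' as a concrete cocycle-extension problem and to isolate its single obstruction. A deformation of $\EEE$ over $V_2=\Spec\CC[\epsilon]/(\epsilon)^3$ is a \v Cech $1$-cochain $G_{\alpha\beta}=g_{\alpha\beta,0}+\epsilon g_{\alpha\beta,1}+\epsilon^2 g_{\alpha\beta,2}$ subject to the multiplicative cocycle condition $G_{\alpha\beta}G_{\beta\gamma}G_{\gamma\alpha}=1$ modulo $\epsilon^3$, and it lifts the given first-order deformation exactly when its reduction modulo $\epsilon^2$ is the prescribed cocycle attached to $[a]$. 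First I would note that the $\epsilon^0$ and $\epsilon^1$ coefficients of $G_{\alpha\beta}G_{\beta\gamma}G_{\gamma\alpha}$ vanish automatically: the $\epsilon^0$ part is the original relation (\ref{equation}), and the $\epsilon^1$ part is precisely (\ref{equation 2}), which holds because the cochain $a_1=(a_{\alpha\beta,1})$ representing $[a]$ is a cocycle. Hence the cocycle condition over $V_2$ collapses to the vanishing of the single $\epsilon^2$ coefficient $K_2=(K_{\alpha\beta\gamma,2})$ of (\ref{eqnarray g}).

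The equivalence then falls straight out of the normal form (\ref{equation a}), namely $K_2=\hat c+\check d a_2$. Here $a_2=(a_{\alpha\beta,2})$ is the only free datum, the $\epsilon^2$-part of the deformation, $\check d a_2$ is its \v Cech coboundary, and $\hat c$ is the fixed $2$-cochain (\ref{equation 3}), the skew-symmetrization of $c_{\beta\gamma\alpha}=a_{\beta\gamma,1}a_{\gamma\alpha,1}$, representing the Yoneda square $[a]\circ[a]$. Solving $K_2=0$ for $a_2$ is thus an affine equation whose linear part is the \v Cech differential, and it is solvable if and only if $\hat c$ is a coboundary. Concretely, for the ``if'' direction, if $[a]\circ[a]=0$ then $\hat c=\check d b$ for some $1$-cochain $b$, and the choice $a_2=-b$ yields $K_2=0$, so $(G_{\alpha\beta})$ is a genuine cocycle over $V_2$ and furnishes the lift; conversely, a lift supplies some $a_2$ with $K_2=0$, whence $\hat c=-\check d a_2$ is exact and $[a]\circ[a]=[\hat c]=0$.

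Two matters complete the proof. The criterion must depend on $[a]$ alone: this is automatic because the Yoneda product descends to cohomology, so $[\hat c]=[a]\circ[a]$ is a function of the class, independent of the representative cocycle and of the auxiliary trivializations. Finally, the computation naturally lives in $\check H^2(\mathfrak U,\mathcal{E}nd(\EEE))$, whereas the statement is phrased in $H^2(X,\mathcal{E}nd(\EEE))$; to pass between them I would take $\mathfrak U$ acyclic for $\mathcal{E}nd(\EEE)$, an affine covering in the algebraic case and a Stein covering in the analytic case, so that Leray's theorem gives $\check H^i(\mathfrak U,\mathcal{E}nd(\EEE))\cong H^i(X,\mathcal{E}nd(\EEE))$ compatibly with the Yoneda products. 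The one genuinely delicate point, already carried out in the paragraphs preceding the statement, is the bookkeeping that brings $K_2$ into the form (\ref{equation a}) and certifies that its quadratic part $\hat c$ is exactly the Yoneda square and not merely something cohomologous to it; granting (\ref{equation a}), the assertion reduces to the one-line solvability statement above.
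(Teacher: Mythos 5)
Your proposal is correct and follows essentially the same route as the paper: reduce the lifting problem to the vanishing of the $\epsilon^2$-coefficient $K_2$ of the multiplicative cocycle condition, invoke the normal form $K_2=\hat c+\check d a_2$ with $\hat c$ representing the Yoneda square, and observe that $K_2=0$ is solvable for $a_2$ precisely when $\hat c$ is exact. Your added remarks on independence of the representative of $[a]$ and on choosing a Leray covering to identify $\check H^{\fatdot}(\mathfrak U,\mathcal{E}nd(\EEE))$ with $H^{\fatdot}(X,\mathcal{E}nd(\EEE))$ are points the paper leaves implicit, but they do not change the argument.
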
 
\begin{definition}
The map 
\begin{eqnarray}\label{eqnarray.0..}%
H^{1}(X, \mathcal{E}nd(\EEE)) & \  \ra &  
H^{2}(X, \mathcal{E}nd(\EEE))\\ \nonumber
([a]) &  \mapsto\ & [a]\circ[a]
\end{eqnarray}    
will be called first obstruction, and denoted $ob^{(2)}$.\end{definition} 
Thus $ob^{(2)}$ is the map of taking the Yoneda square. We will now construct a universal first order deformation of $\EEE$ on $X$.
Let $W=H^{1}(X, \mathcal{E}nd(\EEE))$, $t_1,\dots,t_N$ a coordinate system on $W$, $W_k=\Spec k[t_1,\dots,t_N]/(t_1,\dots,t_N)^{k+1}$
the $k$-th infinitesimal neighborhood of the origin in $W$. The universal first order deformation $\EEE_1$ of $\EEE$ over $W_1$ can be
described as follows.

Choose an open covering of $X$ as above, so that $\EEE$ is defined by a $1$-cocyle $(g_{\alpha \beta})$. We deform
$\EEE$ by specifying a family $G_{\alpha \beta}(t_1,\dots,t_N)$ of $1$-cocyles over $X\times W_1$. Pick up $N$ cocycles $a_i=(a^{(i)}_{\alpha\beta})\in
\check{C}^1(\mathfrak{U},\mathcal{E}nd(\EEE))$ whose cohomology classes $[a_1],\dots,[a_N]$ form a basis of $W$ dual to the coordinates $t_1,\dots,t_N$.
Then we set $g^{(i)}_{\alpha\beta}=a^{(i)}_{\alpha\beta}g_{\alpha\beta}$, where $a^{(i)}_{\alpha\beta}$ is represented by its matrix in the basis $e_{\alpha}$ and write $G_{\alpha \beta}(t_1,\dots,t_N)=g_{\alpha \beta}+\sum_{i=1}^{N} g^{(i)}_{\alpha \beta}t_i$. 
Then $G_{\alpha \beta}$ is a $1$-cocycle and defines a vector bundle $\EEE_1$ over $X\times W_1$ called a universal first order deformation of $\EEE$. The whole universal deformation over $W_1$ cannot be lifted to a deformation on $W_2$. Proposition \ref{proposition 1} implies:
\begin{proposition}\label{proposition 2}
There is a maximal subscheme $K_2\subset W_2$ with the property that $\EEE_1$ extends as a vector bundle from $X\times W_1$ to $X\times K_2$.
This maximal subscheme $K_2$ is the (second infinitesimal neighborhood of the origin in the cone) defined by the equation $ob^{(2)}(z)=0$ in $W_2$.
\end{proposition}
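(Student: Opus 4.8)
The plan is to reduce the assertion to a multi-parameter version of the single obstruction computation already carried out in the proof of Proposition \ref{proposition 1}, and then to extract the maximal subscheme by a linear-algebra argument on quadratic forms. First I would observe that a closed subscheme $K_2$ with $W_1\subset K_2\subset W_2$ is the same datum as an ideal $I$ of $k[t_1,\dots,t_N]$ with $\mathfrak{m}^3\subset I\subset\mathfrak{m}^2$, where $\mathfrak{m}=(t_1,\dots,t_N)$; equivalently, it is a linear subspace $\bar I=I/\mathfrak{m}^3$ of the space of quadrics $\mathfrak{m}^2/\mathfrak{m}^3\cong S^2(W^*)$. Under this dictionary, and the order-reversing correspondence between subschemes and ideals, the largest $K_2$ corresponds to the smallest subspace $\bar I$, so the task becomes to find the smallest $\bar I$ over which $\EEE_1$ extends.

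Next I would write a putative extension over $X\times K_2$ as a family of transition cocycles $G_{\alpha\beta}(t)=g_{\alpha\beta}+\sum_i g^{(i)}_{\alpha\beta}t_i+(\text{degree-}2)$ lifting the universal first-order cocycle, and impose the cocycle condition modulo $I$. Since $K_2$ is a second-order thickening, only the degree-$2$ part of $G_{\alpha\beta}G_{\beta\gamma}G_{\gamma\alpha}=1$ carries information: the degree-$0$ and degree-$1$ parts vanish because $g_{\alpha\beta}$ is a cocycle and each $a_i$ is a $1$-cocycle. Repeating the computation that produced equation (\ref{equation a}), now with the polynomial parameters $t_i$ in place of $\epsilon$, the degree-$2$ obstruction takes the form $\hat c(t)+\check d\,a_2(t)$, where $\hat c(t)=\sum_{i,j}\hat c_{ij}\,t_it_j$ and $\hat c_{ij}$ is a \v Cech $2$-cocycle representing the Yoneda product $[a_i]\circ[a_j]\in\check H^2(\mathfrak U,\mathcal{E}nd(\EEE))$, while $a_2(t)$ is the freely chosen degree-$2$ part of the lift. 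Hence $\EEE_1$ extends over $K_2$ if and only if one can choose $a_2(t)$ with $\hat c(t)+\check d\,a_2(t)\equiv 0\pmod I$, that is, if and only if the class of $\hat c(t)$ vanishes in $\check H^2(\mathfrak U,\mathcal{E}nd(\EEE))\otimes(S^2(W^*)/\bar I)$.

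Finally I would translate this cohomological vanishing into the equation of the cone. Collecting the Yoneda products into a single obstruction element $ob=\sum_{i,j}([a_i]\circ[a_j])\otimes t_it_j\in H^2(X,\mathcal{E}nd(\EEE))\otimes S^2(W^*)$ gives precisely the quadratic map $ob^{(2)}$ of taking the Yoneda square. Writing $ob=\sum_k[b_k]\otimes Q_k$ for a basis $\{[b_k]\}_{k=1}^{M}$ of $H^2(X,\mathcal{E}nd(\EEE))$ and quadrics $Q_k\in S^2(W^*)$, the vanishing of $\hat c(t)$ modulo $\bar I$ reads $\sum_k[b_k]\otimes\bar Q_k=0$, and by linear independence of the $[b_k]$ this is equivalent to $Q_k\in\bar I$ for all $k$. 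Thus extension over $K_2$ holds exactly when $\bar I\supseteq\mathrm{span}\{Q_1,\dots,Q_M\}$; the smallest such $\bar I$ is this span itself, giving the maximal $K_2$ defined by the ideal $\mathfrak{m}^3+(Q_1,\dots,Q_M)$. Since $ob^{(2)}(z)=0$ is exactly the system $Q_1(z)=\dots=Q_M(z)=0$, this $K_2$ is the second infinitesimal neighborhood of the origin in the cone $\{ob^{(2)}=0\}$, as claimed. The step requiring the most care is the faithful bookkeeping of the tensor factor $S^2(W^*)$ throughout, namely verifying that the multi-parameter obstruction assembles exactly into the Yoneda-square quadratic map and that the freedom in $a_2(t)$ accounts for all of the relevant coboundaries; once this is established, the maximality is a formal consequence of the order-reversing correspondence between subschemes and ideals.
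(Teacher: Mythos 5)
Your argument is correct and is essentially the paper's: the paper derives Proposition~\ref{proposition 2} directly from Proposition~\ref{proposition 1}, and the multi-parameter bookkeeping you supply (the cocycle $F_{\alpha\beta\gamma,2}=\sum_{i,j}(a^{(i)}_{\beta\gamma}a^{(j)}_{\gamma\alpha}+a^{(i)}_{\alpha\beta}a^{(j)}_{\gamma\alpha}+a^{(i)}_{\alpha\beta}a^{(j)}_{\beta\gamma})t_it_j$ and the ideal generated by the quadrics $Q_k$) is exactly what appears in the $k=1$ step of the proof of Theorem~\ref{theorem 1}, where your $\mathrm{span}\{Q_1,\dots,Q_M\}$ is the image of the adjoint map $f^{(2)*}:H^{2}(X,\mathcal{E}nd(\EEE))^*\rar\Sym^2(W^*)$.
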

We will now prove the following theorem, providing a construction of the formal Kuranishi space:
\begin{theorem}\label{theorem 1}
Let $X,\EEE$ be as above, $W=H^{1}(X, \mathcal{E}nd(\EEE))$, $(\delta_1,\dots,\delta_N)$ a basis of $W$ and $(t_1\dots,t_N)$ the dual coordinates on $W$. Let $W_k=\Spec k[t_1,\dots,t_N]/(t_1,\dots,t_N)^{k+1}$ be the $k$-th infinitesimal neighborhood of the origin in $W$, $\EEE_1$ a universal first order deformation of $\EEE$ over $X\times W_1$ as above. Then there exists a formal power series 
$$f(t_1,\dots,t_N)=\sum_{k=2}^{\infty} f_{k}(t_1\dots,t_N)\in H^{2}(X, \mathcal{E}nd(\EEE))[[t_1,\dots,t_N]],$$
where $f_k$ is  homogeneous of degree $k$, with the following property.
Let $I$ be the ideal of $k[[t_1,\dots,t_N]]$ generated by the image of the map $f^*:H^{2}(X, \mathcal{E}nd(\EEE))^*\rar k[[t_1,\dots,t_N]]$, adjoint to $f$.
Then for any $k\geq 2$, the universal first deformation $\EEE_1$ of $\EEE$ over $X\times W_1$ extends to a vector bundle $\EEE_k$ on $X\times K_k$, where $K_k$ is a closed subscheme of $W_k$ defined by the ideal $I\otimes k[[t_1,\dots,t_N]]/(t_1,\dots,t_N)^{k+1}.$
\end{theorem}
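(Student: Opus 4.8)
The plan is to construct the formal transition cocycle defining the families $\EEE_k$ order by order in the variables $t_1,\dots,t_N$, reading off at each stage the homogeneous component $f_{k+1}$ of $f$ as the cohomology class of the obstruction to passing from order $k$ to order $k+1$. Concretely, I would look for a formal multiplicative $1$-cocycle
$$G_{\alpha\beta}(t)=g_{\alpha\beta}+\sum_{|\nu|\geq 1} g_{\alpha\beta,\nu}\,t^{\nu}\in\Gamma(U_{\alpha\beta},M_r(\OOO_X))[[t_1,\dots,t_N]],$$
whose linear part is the universal first order deformation $\sum_i g^{(i)}_{\alpha\beta}t_i$ appearing before Proposition \ref{proposition 2}, and which satisfies $G_{\alpha\beta}G_{\beta\gamma}G_{\gamma\alpha}=1$ modulo the ideal $I$ that the construction will simultaneously produce. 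Passing as before to the additive endomorphism-valued cochains $a_{\alpha\beta,\nu}$ (the sections of $\mathcal{E}nd(\EEE)$ with matrices $g_{\alpha\beta,\nu}g^{-1}_{\alpha\beta}$ in the basis $e_\alpha$), the multiplicative cocycle condition becomes, in each total degree, an additive equation of the shape $\check d a_{k+1}=-\hat c_{k+1}$, exactly as in (\ref{equation a}), where $\hat c_{k+1}\in\check C^2(\mathfrak U,\mathcal{E}nd(\EEE))$ is a fixed $2$-cochain built from the data already fixed in degrees $\leq k$.

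The induction then runs as follows. The base case $k=2$ is Proposition \ref{proposition 2}: here $\hat c_2$ represents the Yoneda square of the universal first order class, so its cohomology class is the $H^2$-valued quadratic form $f_2(t)=\sum_{i,j}([a_i]\circ[a_j])\,t_it_j$, and $K_2$ is cut out by $f_2=0$. For the inductive step, assume the family $\EEE_k$ has been built over $X\times K_k$, so that $G_{\alpha\beta}(t)$ is a cocycle modulo $I_k+(t)^{k+1}$, where $I_k=(f_2,\dots,f_k)$ is the ideal generated by the scalar components of the lower homogeneous pieces. The degree-$(k+1)$ part of $G_{\alpha\beta}G_{\beta\gamma}G_{\gamma\alpha}$ furnishes the $2$-cochain $\hat c_{k+1}$; I would first verify that $\hat c_{k+1}$ is $\check d$-closed, and then set $f_{k+1}\in H^2(X,\mathcal{E}nd(\EEE))$ to be its cohomology class (after fixing once and for all a vector-space splitting $\check Z^2=\check B^2\oplus\HHH^2$ with $\HHH^2\simeq H^2$, so that the class has a canonical cochain representative). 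On the locus where $f_{k+1}$ vanishes modulo $I_k$, the cochain $\hat c_{k+1}$ becomes $\check d$-exact, so one can solve $\check d a_{k+1}=-\hat c_{k+1}$ for $a_{k+1}$, hence determine $g_{\alpha\beta,(k+1)}$ and extend the family to $X\times K_{k+1}$, where $K_{k+1}$ is the subscheme of $W_{k+1}$ defined by $I_{k+1}=(f_2,\dots,f_{k+1})$. Assembling all the homogeneous pieces gives the formal power series $f=\sum_{k\geq 2}f_k$, and $I$ is the ideal it generates via the adjoint map $f^*$; since $f_{k+1}$ and higher pieces vanish modulo $(t)^{k+1}$, the truncation of $I$ coincides with that of $I_k$, so $K_k$ is exactly the subscheme of $W_k$ cut out by the image of $I$, which is the assertion.

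The main obstacle is the verification that, at each order, the obstruction $2$-cochain $\hat c_{k+1}$ is genuinely $\check d$-closed, so that the class $f_{k+1}\in H^2(X,\mathcal{E}nd(\EEE))$ is well defined; this is what allows the obstruction to be absorbed into the defining equations of the cone rather than forcing an inconsistency. The closedness is a ``Bianchi-type'' identity coming from the associativity of the products of transition functions, encoded in the four-index compatibility $(G_{\alpha\beta}G_{\beta\gamma})G_{\gamma\delta}=G_{\alpha\beta}(G_{\beta\gamma}G_{\gamma\delta})$, and one extracts it by comparing degree-$(k+1)$ parts after invoking the inductive hypothesis that all lower-degree cocycle relations hold on $K_k$. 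The remaining delicate bookkeeping is the translation between the multiplicative data $(g_{\alpha\beta,\nu})$ in the overlapping trivializations $e_\alpha$, $e_\beta$ and the additive $\mathcal{E}nd(\EEE)$-valued cochains $a_{\alpha\beta,\nu}$, together with the multi-index accounting needed to see that the ``mixed'' lower-order terms assemble into a coboundary while only the genuinely new obstruction survives as $f_{k+1}$; this is the higher-order analogue of the computation leading to (\ref{eqnarray g}) and (\ref{equation a}).
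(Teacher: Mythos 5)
Your proposal is correct and follows essentially the same route as the paper: an order-by-order construction of the multiplicative transition cocycle, translation to additive $\mathcal{E}nd(\EEE)$-valued cochains via $a_{\alpha\beta,\nu}=g_{\alpha\beta,\nu}g^{-1}_{\alpha\beta}$, fixed splittings of cocycles/coboundaries playing the role of the paper's cross-sections $\sigma_i$ and $\tau$, and the closedness of the degree-$(k+1)$ obstruction cochain established by the four-index associativity identity, which is exactly the content of the paper's Lemma \ref{lemma 1}. The only difference is presentational (multi-index power series versus homogeneous components), not mathematical.
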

\begin{definition}
The inverse limit $\mathbb{K}=\underleftarrow{\lim}K_k$ is called the formal Kuranishi space of $\EEE$, and $\boldsymbol{\EEE}=\underleftarrow{\lim} \EEE_k$ the formal universal bundle over $\mathbb{K}$.
\end{definition}
\begin{proof}
Let $\mathfrak U=(U_k)$ be an open covering, sufficiently fine so that $\EEE_{\vert_{U_{\alpha}}}$ is trivialized
by a basis $e_{\alpha}$, and the groups $H^{i}(X,\End(\EEE))$ are computed by the $\check{C}$ech complex $(\check{C}^{\fatdot}(\mathfrak U, \mathcal{E}nd(\EEE)),\check{d})$. Let $\check{Z}^{i}(\mathfrak U, \mathcal{E}nd(\EEE)), \check{B}^{i}(\mathfrak U, \mathcal{E}nd(\EEE))$ denote the subspaces of cocycles and coboundaries in $\check{C}^{i}(\mathfrak U, \mathcal{E}nd(\EEE))$ respectively. 
Let us fix some cross-sections $\sigma_i:H^{i}(X,\mathcal{E}nd(\EEE))\rar\check{Z}^{i}(\mathfrak U,\mathcal{E}nd(\EEE))$ and $\tau:\check{B}^{2}(\mathfrak U, \mathcal{E}nd(\EEE))\rar\check{C}^{1}(\mathfrak U, \mathcal{E}nd(\EEE))$ of the natural maps in the opposite
direction. Let $a_i=(a^{(i)}_{\alpha \beta})=\sigma_1(\delta_i)$, and denote, as above, by $(g_{\alpha \beta})$ the $1$-cocycle defining
$\EEE$, so that $e_{\beta}=e_{\alpha}g_{\alpha \beta}$. We will construct by induction on $k\geq 0$ the homogeneous forms of degree $k$ in $t_1,\dots,t_N$
\begin{eqnarray}\label{eqnarray.1..}%
G_{\alpha \beta, k}(t_1,\dots,t_N)\in\Gamma(U_{\alpha \beta},M_r(\OOO_X))\otimes k[t_1,\dots,t_N], & \\ \nonumber
F_{\alpha \beta \gamma, k}(t_1,\dots,t_N)\in\Gamma(U_{\alpha \beta \gamma},\mathcal{E}nd(\EEE))\otimes k[t_1,\dots,t_N], & \\ \nonumber
f_k(t_1,\dots,t_N)\in H^{2}(X,\mathcal{E}nd(\EEE))\otimes k[t_1,\dots,t_N]\end{eqnarray}
with the following properties:\\ 
$(i)$ $G_{\alpha\beta,0}=g_{\alpha\beta},G_{\alpha\beta,1}=\sum_{i=1}^{N}a^{(i)}_{\alpha\beta}g_{\alpha\beta}t_i$, where $a^{(i)}_{\alpha\beta}$ are represented by their matrices in the basis $e_{\alpha}$.\\
$(ii)$ $f_k=0, F_{\alpha \beta \gamma, k}=0$ for $k=0,1$.\\
$(iii)$ For each k $\geq 1$, let $f^{(k)}=\sum_{i\leq k}f_i$, and let $I^{(k+1)}$ be the ideal generated by $(t_1,\dots,t_N)^{k+2}$ and the image of the adjoint map $f^{(k)*}:H^{2}(X, \mathcal{E}nd(\EEE))^*\rar k[t_1,\dots,t_N]$. Then $(F_{\alpha \beta \gamma, k+1})$ is
a cocycle modulo $I^{(k+1)}$ and $f_{k+1}$ is a lift to $H^{2}(X, \mathcal{E}nd(\EEE))\otimes k[t_1,\dots,t_N]$ of the cohomology class
$[(F_{\alpha \beta \gamma, k+1}\mod I^{(k+1)})]\in H^{2}(X, \mathcal{E}nd(\EEE))\otimes k[t_1,\dots,t_N]/I^{(k+1)}$.\\
$(iv)$ For any $k\geq 1$, set $G^{(k)}_{\alpha \beta}=\sum_{i\leq k}G_{\alpha \beta, i}$. Then $G^{(k)}_{\alpha \beta}G^{(k)}_{\beta \gamma}G^{(k)}_{\gamma \alpha}\equiv (1+F_{\alpha \beta \gamma, k+1})\mod I^{(k+1)}.$
Properties $(i), (ii)$ determine $G_{\alpha \beta, k}, F_{\alpha \beta \gamma, k}$ for $k\leq 1$.

The proof of Proposition \ref{proposition 1}
allows us to see that $(iii), (iv)$ are verified for $k=1$ with
$$F_{\alpha \beta \gamma, 2}=\sum_{i,j=1}^{N}(a^{(i)}_{\beta \gamma}a^{(j)}_{\gamma \alpha}+a^{(i)}_{\alpha \beta}a^{(j)}_{\gamma \alpha}+
a^{(i)}_{\alpha \beta}a^{(j)}_{\beta \gamma})t_it_j.$$ and to determine $G_{\alpha \beta, 2}$ we proceed as follows.
Let $f_2=[(F_{\alpha\beta \gamma,2})]$, and $I^{(2)}$ be the ideal of $K_2$, that is the ideal generated by $(t_1,\dots,t_N)^3$ and the image of 
the adjoint map $f^{(2)*}:H^{2}(X, \End(\EEE))^*\rar k_2[t_1,\dots,t_N]=\Sym^{2}(W^{*})$ (the degree-$2$ homogeneous part of $k[t_1,\dots,t_N]$).
Then the reduction $\mod I^{(2)}$ of $F_2=(F_{\alpha\beta \gamma,2})$ is an element $\bar{F_2}=(F_{\alpha\beta \gamma,2})\mod I^{(2)}\in\check{B}^{2}(\mathfrak U, \mathcal{E}nd(\EEE))\otimes (\Sym^{2}(W^{*})/I^{(2)}\cap\Sym^{2}(W^{*})).$ We define a skew-symmetric $1$-cochain $a_2=a_{\alpha \beta,2}\in\check{C}^{1}(\mathfrak U,\mathcal{E}nd(\EEE))\otimes\Sym^{2}(W^{*})$ as an arbitrary lift of $(\tau\otimes\id)(\bar{F_2})\in\check{C}^{1}(\mathfrak U,\mathcal{E}nd(\EEE))\otimes(\Sym^{2}(W^{*})/I^{(2)}\cap\Sym^{2}(W^{*}))$ under the quotient map. Next we define $G_{\alpha \beta,2}$ by $G_{\alpha \beta,2}=a_{\alpha \beta,2}g_{\alpha \beta}$, where the matrix of $a_{\alpha \beta,2}$ is taken in the basis $e_{\alpha}$.

Likewise, assuming that $G^{(k-1)}_{\alpha \beta}$, $F^{(k)}_{\alpha \beta}$ are already fixed, we
can choose $F_{\alpha \beta \gamma, k+1}$ and $G_{\alpha \beta,k}$ as follows. By the induction hypothesis, we
have $G^{(k-1)}_{\alpha \beta}G^{(k-1)}_{\beta \gamma}G^{(k-1)}_{\gamma \alpha}\equiv(1+F_{\alpha \beta \gamma,k})\mod I^{(k)}.$
Then $(F_{\alpha \beta \gamma, k})$ is a cocycle modulo $I^{(k)}$, and is a coboundary modulo $I^{(k+1)}$:
$\bar{F_k}=(F_{\alpha \beta \gamma, k}\mod I^{(k+1)})\in\check{B}^{2}(\mathfrak U, \mathcal{E}nd(\EEE))\otimes (\Sym^{k}(W^{*})/I^{(k+1)}\cap\Sym^{k}(W^{*})).$
We define $G_{\alpha \beta,k}=a_{\alpha \beta,k}g_{\alpha \beta}$ with $(a_{\alpha \beta,k})\in\check{C}^{1}(\mathfrak U,\End(\EEE))\otimes\Sym^{k}(W^{*})$ an arbitrary skew-symmetric lift to $\Sym^{k}(W^{*})$ of $(\tau\otimes\id)(\bar{F_k}).$
Then $G^{(k)}_{\alpha \beta}G^{(k)}_{\beta \gamma}G^{(k)}_{\gamma \alpha}\equiv 1\mod (I^{(k+1)}+(t_1,\dots,t_N)^{(k+1)}),$
and we can define $F_{\alpha \beta \gamma, k+1}$ as the degree-$(k+1)$ homogeneous component of $G^{(k)}_{\alpha \beta}G^{(k)}_{\beta \gamma}G^{(k)}_{\gamma \alpha}.$
To end this inductive construction of the sequences $G_{\alpha \beta,k}$, $F_{\alpha \beta \gamma, k+1}$, we need only to
prove that $F_{k+1}=(F_{\alpha \beta \gamma,k+1})$ is a $2$-cocycle modulo $I^{(k+1)}$ with values in $\mathcal{E}nd(\EEE)$.
\end{proof}
The latter is proved in Lemma \ref{lemma 1} below.
\begin{lemma}\label{lemma 1}
The $2$-cochain $(F_{\alpha \beta \gamma, k+1})$, constructed in the proof of Theorem \ref{theorem 1} as the
degree-$(k+1)$ homogeneous component of $G^{(k)}_{\alpha \beta}G^{(k)}_{\beta \gamma}G^{(k)}_{\gamma \alpha}$, is
a $2$-cocycle modulo $I^{(k+1)}$ with values in $\mathcal{E}nd(\EEE)$.
\end{lemma}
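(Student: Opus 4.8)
The plan is to prove that the Čech coboundary $\check{d}F_{k+1}$ vanishes on every quadruple overlap $U_{\alpha\beta\gamma\delta}$ modulo $I^{(k+1)}$, where $\check{d}$ is the differential of the complex of sections of $\mathcal{E}nd(\EEE)$. Write $\Phi_{\alpha\beta\gamma}=G^{(k)}_{\alpha\beta}G^{(k)}_{\beta\gamma}G^{(k)}_{\gamma\alpha}$, read as the matrix in the frame $e_\alpha$ of a section of $\mathcal{E}nd(\EEE)$ over $U_{\alpha\beta\gamma}$. By properties $(iii)$--$(iv)$, every homogeneous component of $\Phi_{\alpha\beta\gamma}$ of degree $\le k$ is killed by $I^{(k+1)}$, so $\Phi_{\alpha\beta\gamma}\equiv 1+F_{\alpha\beta\gamma,k+1}\pmod{I^{(k+1)}}$ with $F_{\alpha\beta\gamma,k+1}$ homogeneous of degree $k+1$. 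Thus the goal reduces to the additive identity $F_{\beta\gamma\delta,k+1}-F_{\alpha\gamma\delta,k+1}+F_{\alpha\beta\delta,k+1}-F_{\alpha\beta\gamma,k+1}\equiv 0$ on $U_{\alpha\beta\gamma\delta}$.

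The engine is the associativity of matrix multiplication applied to the fourfold product $P=G^{(k)}_{\alpha\beta}G^{(k)}_{\beta\gamma}G^{(k)}_{\gamma\delta}G^{(k)}_{\delta\alpha}$, evaluated in two ways. I will use the approximate inverse relations $G^{(k)}_{\mu\nu}G^{(k)}_{\nu\mu}\equiv 1\pmod{I^{(k+1)}}$; these are the higher-order analogues of (\ref{equation 1}) and follow from the skew-symmetric choice of the cochains $a_{\alpha\beta,i}$ made in the construction. Regrouping $P$ as $(G_{\alpha\beta}G_{\beta\gamma})(G_{\gamma\delta}G_{\delta\alpha})$ and inserting the approximate identity $G_{\gamma\alpha}G_{\alpha\gamma}\equiv 1$ splits off two triple products and yields $P\equiv\Phi_{\alpha\beta\gamma}\Phi_{\alpha\gamma\delta}$; regrouping instead as $G_{\alpha\beta}(G_{\beta\gamma}G_{\gamma\delta})G_{\delta\alpha}$ and inserting the analogous identities yields $P\equiv(G^{(k)}_{\alpha\beta}\Phi_{\beta\gamma\delta}G^{(k)}_{\beta\alpha})\Phi_{\alpha\beta\delta}$. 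Equating the two readings gives the exact relation
$$\Phi_{\alpha\beta\gamma}\Phi_{\alpha\gamma\delta}\equiv(G^{(k)}_{\alpha\beta}\Phi_{\beta\gamma\delta}G^{(k)}_{\beta\alpha})\Phi_{\alpha\beta\delta}\pmod{I^{(k+1)}}.$$

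It remains to linearise. Substituting $\Phi=1+F_{\cdot,k+1}$, every product of two $F$-terms is homogeneous of degree $2(k+1)\ge k+2$, hence lies in $(t_1,\dots,t_N)^{k+2}\subseteq I^{(k+1)}$ and drops out, so both sides become affine in the $F$'s. The left side is $1+F_{\alpha\beta\gamma,k+1}+F_{\alpha\gamma\delta,k+1}$. On the right the only delicate term is $G^{(k)}_{\alpha\beta}F_{\beta\gamma\delta,k+1}G^{(k)}_{\beta\alpha}$: since $F_{\beta\gamma\delta,k+1}$ is already homogeneous of degree $k+1$ while $G^{(k)}_{\alpha\beta}-g_{\alpha\beta}$ has no constant term, replacing $G^{(k)}_{\alpha\beta}$ by its leading part $g_{\alpha\beta}$ alters this term only in degrees $\ge k+2$, so modulo $I^{(k+1)}$ it equals $g_{\alpha\beta}F_{\beta\gamma\delta,k+1}g_{\beta\alpha}$, which is exactly the frame-$e_\alpha$ matrix $[F_{\beta\gamma\delta,k+1}]_{e_\alpha}$ of the section $F_{\beta\gamma\delta,k+1}$ of $\mathcal{E}nd(\EEE)$, i.e.\ the change of frame from $e_\beta$ to $e_\alpha$ already built into the Čech complex. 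Comparing the two affine expressions gives precisely $\check{d}F_{k+1}\equiv 0$ as sections of $\mathcal{E}nd(\EEE)$.

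I expect the main obstacle to be bookkeeping rather than any conceptual difficulty. Two points must be pinned down: first, the approximate inverse relations $G^{(k)}_{\mu\nu}G^{(k)}_{\nu\mu}\equiv 1\pmod{I^{(k+1)}}$, deduced from the skew-symmetry of the $a_{\alpha\beta,i}$, which legitimise all the insertions of approximate identities to the required order; and second, a rigorous tracking of the frame in which each matrix product is written, so that the conjugation by $G^{(k)}_{\alpha\beta}$ is correctly recognised as the transition $e_\beta\to e_\alpha$ for a genuine section of $\mathcal{E}nd(\EEE)$ and thereby matches the identification in $\check{d}$. Once these degree-counting and frame-tracking points are settled, the cocycle identity is forced by associativity alone.
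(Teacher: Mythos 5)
Your proof is correct, and it runs on the same engine as the paper's: the approximate inverse relations $G^{(k)}_{\mu\nu}G^{(k)}_{\nu\mu}\equiv 1$, the observation that products of cochains of the form $1+(\text{degree }k+1)+I^{(k+1)}$ behave additively in degree $k+1$, and the identification of the conjugate $g_{\alpha\beta}F_{\beta\gamma\delta}g_{\beta\alpha}$ with the frame-$e_\alpha$ matrix of the section $F_{\beta\gamma\delta}$ of $\mathcal{E}nd(\EEE)$. The decomposition, however, is organized differently. The paper proves the cyclic identity $F_{\alpha\beta\gamma}+F_{\alpha\gamma\delta}+F_{\alpha\delta\beta}+F_{\beta\delta\gamma}\equiv 0$ of (\ref{equation 5}) by writing the sum of the four $F$'s as the degree-$(k+1)$ component of the \emph{sum} of four triple products, replacing that sum by the \emph{product} of the four (legitimate to leading order), and telescoping that product to $1$ via the approximate inverses; the alternating \v Cech condition (\ref{equation 4}) is then recovered by observing that skew-symmetry of $(F_{\alpha\beta\gamma})$ is the special case $\delta=\gamma$. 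You instead read the single fourfold product $G^{(k)}_{\alpha\beta}G^{(k)}_{\beta\gamma}G^{(k)}_{\gamma\delta}G^{(k)}_{\delta\alpha}$ in two ways by associativity and insertion of approximate identities, then linearize; this lands directly on the alternating identity with no detour through skew-symmetry, a mild but genuine streamlining. One point to tighten: both arguments need $G^{(k)}_{\mu\nu}G^{(k)}_{\nu\mu}\equiv 1\bmod I^{(k+1)}$, which the paper simply carries as an explicit hypothesis of the lemma, whereas you derive it from the skew-symmetric choice of the $a_{\mu\nu,i}$. Skew-symmetry only kills the terms linear in the $a$'s, so you should add a sentence explaining why the quadratic terms such as $a_{\mu\nu}a_{\nu\mu}$ land in $I^{(k+1)}$, or else carry this relation along as part of the induction, as the paper does.
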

\begin{proof}
The hypotheses, under which we have to prove the assertion of lemma \ref{lemma 1}, are the following:
$G^{(k)}_{\alpha \beta}=\sum_{i=0}^{k} G_{\alpha \beta,i}\in\Gamma(U_{\alpha \beta}, M_r(\OOO_X))\otimes k[t_1,\dots,t_N]$ 
are the matrix polynomials of degree $\leq k$ in $t_1,\dots,t_N$ and there is an ideal $J\subset(t_1,\dots,t_N)^{2}$
such that $G^{(k)}_{\alpha \beta}G^{(k)}_{\beta \alpha}\equiv 1\mod J$ and $G^{(k)}_{\alpha \beta}G^{(k)}_{\beta \gamma}G^{(k)}_{\gamma \alpha}
\equiv 1\mod(J+(t_1,\dots,t_N)^{k+1}).$
The ideal $J$ in Theorem \ref{theorem 1} is $I^{(k+1)}$. The collection $(F_{\alpha \beta \gamma, k})$ is considered not as a $2$-cochain
in $M_r(\OOO_X)$, but as a $2$-cochain in $\mathcal{E}nd(\EEE)$, $\EEE$ being defined by the multiplicative cocycle $(g_{\alpha \beta})=G_{\alpha \beta,0}\in\check{Z}^{1}(\mathfrak U, \GL_r(\OOO_X))$. Thus $F_{\alpha \beta \gamma}=F_{\alpha \beta \gamma, k+1}$ is a certain section of $\mathcal{E}nd(\EEE)$ over $U_{\alpha \beta \gamma}$ given by its matrix in the basis $e_{\alpha}$ of $\EEE_{\vert U_{\alpha \beta \gamma}}$.
We want to show that 
\begin{equation}\label{equation 4} F_{\alpha \beta \gamma}-F_{\alpha \beta \delta}+F_{\alpha \gamma \delta}-F_{\beta \gamma \delta}\equiv 0\mod J
\end{equation}
We will replace it by a slightly different identity \begin{equation}\label{equation 5} F_{\alpha \beta \gamma}+F_{\alpha \gamma \delta}+F_{\alpha  \delta \beta}+F_{\beta  \delta \gamma}\equiv 0\mod J,
\end{equation}
which is the same as (\ref{equation 4}) as soon as we know that $(F_{\alpha \beta \gamma})$ is skew symmetric. We have:
\begin{multline*} F_{\alpha \beta \gamma}=[G_{\alpha \beta}G_{\beta \gamma}G_{\gamma \alpha}]_{k+1},
F_{\alpha \gamma \delta}=[G_{\alpha \gamma}G_{\gamma \delta}G_{\delta \alpha}]_{k+1},
F_{\alpha \delta \beta}=[G_{\alpha \delta}G_{\delta \beta}G_{\beta \alpha}]_{k+1},\\
F_{\beta \delta \gamma}=G_{\alpha \beta,0}([G_{\beta \delta}G_{\delta \gamma}G_{\gamma \beta}]_{k+1})G^{-1}_{\alpha \beta,0}=[G_{\alpha \beta} G_{\beta \delta}G_{\delta \gamma}G_{\gamma \beta}G_{\beta \alpha}]_{k+1},\end{multline*}
where we omitted the superscript $k$ in $G^{(k)}_{\alpha \beta}$, $[\dots]_{k+1}$ stands for the homogeneous component 
of degree $k+1$ in $t_1,\dots,t_N$, and all the four terms are given by their matrices in the basis $e_{\alpha}$.
Now \begin{align*} F_{\alpha \beta \gamma}+F_{\alpha \gamma \delta}+F_{\alpha  \delta \beta}+F_{\beta  \delta \gamma}=[G_{\alpha \beta}G_{\beta \gamma}G_{\gamma \alpha}+G_{\alpha \gamma}G_{\gamma \delta}G_{\delta \alpha}+G_{\alpha \delta}G_{\delta \beta}G_{\beta \alpha}+&\\ \nonumber G_{\alpha \beta} G_{\beta \delta}G_{\delta \gamma}G_{\gamma \beta}G_{\beta \alpha}]_{k+1}\equiv[G_{\alpha \beta}G_{\beta \gamma}G_{\gamma \alpha}\times G_{\alpha \gamma}G_{\gamma \delta}G_{\delta \alpha}\times G_{\alpha \delta}G_{\delta \beta}G_{\beta \alpha}&\\ \nonumber\times G_{\alpha \beta} G_{\beta \delta}G_{\delta \gamma}G_{\gamma \beta}G_{\beta \alpha}]_{k+1}\equiv 0\mod J.\end{align*}
The skew symmetry of $(F_{\alpha \beta \gamma})$ is a particular case of (\ref{equation 5}) when $\delta=\gamma$.
\end{proof}

\section{Connections}\label{Connections}
Let $X,\EEE$ be as above. A rational (or meromorphic in the case when $X$ is a complex space) connection on $\EEE$
is a $k$-linear morphism of sheaves $\nabla:\EEE\rar\EEE\otimes\Omega^{1}_X (D)$ satisfying the Leibniz rule:
$$\forall p \in X,  \forall f\in\OOO_p,  \forall s\in\EEE_p, \nabla(fs)=f\nabla s+s\otimes\ud f.$$
We assume that $D$ is an effective Cartier divisor and call $D$ the divisor of poles of $\nabla$.
We can extend $\nabla$ in a natural way to \begin{displaymath}\EEE\otimes\Omega^{\bullet}(*D)=\raisebox{-1.em}{$\stackrel{\displaystyle\underrightarrow\lim}{\scriptstyle n}\:$}\oplus_{i\geq 0}\EEE\otimes\Omega^{i}(nD)\end{displaymath}
as a $k$-linear map $\nabla:\EEE\otimes\Omega^{i}(*D)\rar\EEE\otimes\Omega^{i+1}(*D)$ satisfying the Leibniz rule
$\nabla(s\otimes\omega)=\nabla s\wedge\omega+s\otimes\ud\omega.$
The connection is integrable if $\nabla^2=0.$ In this case, $\nabla$ defines the generalized de Rham complex
\begin{equation} 0\ra \EEE(*D)\xymatrix@1{\ar[r]^{\nabla}&}\EEE\otimes\Omega^{1}(*D)\xymatrix@1{\ar[r]^{\nabla}&} \EEE\otimes\Omega^{2}(*D) \xymatrix@1{\ar[r]^{\nabla}&}\dots,\end{equation}
If $X$ is smooth at all the points of $X\setminus{D}$, then this complex is exact over $X\setminus{D}$ in all degrees different from $0$ by
the Poincar\'e lemma. Under the same assumption, the subsheaf $\EEE^h$ of sections $s$ of $\EEE\vert_{X\setminus{D}}$
satisfying $\nabla(s)=0$ is a local system of rank $r$, that is a vector bundle with constant transition functions, and
$\EEE\vert_{X\setminus{D}}=\EEE^h\otimes\OOO_{X\setminus{D}}$; the sections of $\EEE^h$ are called horizontal
sections of $(\EEE, \nabla)$. The complex defined above, when restricted to $X\setminus{D}$, is a resolution of $\EEE^h$. 

A connection $\nabla$ on $\EEE$ induces natural connections on $\EEE^{*}, \mathcal{E}nd(\EEE), (\EEE^*)^{\otimes m}\otimes\EEE^{\otimes n}$,
and more generally, on any Schur functor of $\EEE$ or $\EEE^*$. We will use in the sequel the induced connection $\nabla_{\mathcal{E}nd (\EEE)}$ on
$\mathcal{E}nd(\EEE)$. Taking a local section $\phi$ of $\mathcal{E}nd(\EEE)$, we can think of $\phi$ as a sheaf homomorphism $\EEE\rar\EEE$ over an open set $U\subset X$
, and $\nabla_{\mathcal{E}nd (\EEE)}$ is defined by 
\begin{eqnarray}\label{eqnarray.2..}%
\nonumber\nabla_{\mathcal{E}nd(\EEE)} (\phi)=\nabla\circ\phi-\phi\circ\nabla & \\ \nonumber
\nabla_{\mathcal{E}nd(\EEE)}:\mathcal{E}nd(\EEE)\rar\mathcal{E}nd(\EEE)\otimes\Omega^{1}(D)  & \\ \nonumber
\end{eqnarray}
If $\nabla$ is integrable, then $\nabla_{\mathcal{E}nd(\EEE)}$ is also integrable, and $\mathcal{E}nd(\EEE)^h=\mathcal{E}nd(\EEE^h)$.

Let now $\mathfrak U=(U_{\alpha})$ be a sufficiently fine open covering of $X$, $e_{\alpha}$ a trivialization of $\EEE$ 
over $U_{\alpha}$, $(g_{\alpha \beta})$ the transition functions of $\EEE$ with respect to the trivilizations $(e_{\alpha})$.
The connection matrices $A_{\alpha}\in\Gamma(U_{\alpha}, M_r(\OOO_X)\otimes\Omega^{1}(D))$ of $\nabla$ are defined by
$\nabla(e_{\alpha})=e_{\alpha}A_{\alpha}$. The transition rule for the matrices $A_{\alpha}$ is
\begin{equation}\label{equation 6}
A_{\beta}=g^{-1}_{\alpha \beta}\ud g_{\alpha \beta}+ g^{-1}_{\alpha \beta}A_{\alpha}g_{\alpha \beta}
\end{equation} over $U_{\alpha \beta}$.
This equation can be given a cohomological interpretation. To this end, introduce the cochains $\AAA=(\AAA_{\alpha})\in\Check{C}^{0}(\mathfrak U, \mathcal{E}nd(\EEE)\otimes\Omega^{1}(D))$, $\GGG=(\GGG_{\alpha \beta})\in\Check{C}^{1}(\mathfrak U, \mathcal{E}nd(\EEE)\otimes\Omega^{1})$ by saying that the matrix of $\AAA_{\alpha}$ (resp. $\GGG_{\alpha \beta}$) in the basis $e_{\alpha}$ is $A_{\alpha}$ (resp. $\ud g_{\alpha \beta}g^{-1}_{\alpha \beta})$.
Then $\GGG$ is a cocycle.
\begin{definition}
The cohomology class $[\GGG]$ of $\GGG$ in $H^{1}(X, \mathcal{E}nd(\EEE)\otimes\Omega^{1})$ does not depend on the
choice of trivializations $(e_{\alpha})$ and is called the Atiyah class of $\EEE$.
We will denote this class by $\At(\EEE)$ and its image in $H^{1}(X, \mathcal{E}nd(\EEE)\otimes\Omega^{1}(D))$, in $H^{1}(X, \mathcal{E}nd(\EEE)\otimes\Omega^{1}(*D))$ by $\At^D(\EEE)$,(resp. $\At^{*D}(\EEE)$).
\end{definition} 
Now we can write (\ref{equation 6}) in the form 
$$\GGG=\Check{d}\AAA,$$ and we get the following assertion: 
\begin{proposition}
Let $X, \EEE$ be as above, $D$ an effective Cartier divisor in $X$. Then $\EEE$ 
admits a connection with divisor of poles $D$ if and only if $\At^{D}(\EEE)$ vanishes in $H^{1}(X, \mathcal{E}nd(\EEE)\otimes\Omega^{1}(D))$.
\end{proposition}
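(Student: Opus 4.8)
The plan is to read the cohomological reformulation of the transition rule (\ref{equation 6}) established just above the statement, namely $\GGG=\check{d}\AAA$, in both directions. The one point that needs care is bookkeeping about poles: the cochain $\GGG$ has entries $\ud g_{\alpha\beta}g^{-1}_{\alpha\beta}$ with no poles, so it lives in $\check{C}^{1}(\mathfrak U,\mathcal{E}nd(\EEE)\otimes\Omega^1)$ and its class is $\At(\EEE)$; the connection matrices $A_\alpha$, on the other hand, are allowed poles along $D$, so the $0$-cochain $\AAA=(\AAA_\alpha)$ they assemble lies in $\check{C}^{0}(\mathfrak U,\mathcal{E}nd(\EEE)\otimes\Omega^1(D))$. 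Hence the identity $\GGG=\check{d}\AAA$ is to be read in the $\Omega^1(D)$-valued \v Cech complex, after the inclusion $\Omega^1\into\Omega^1(D)$, and it says precisely that $\At^{D}(\EEE)$, the image of $[\GGG]$ in $H^{1}(X,\mathcal{E}nd(\EEE)\otimes\Omega^1(D))$, is a coboundary.

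For the forward implication, suppose $\EEE$ carries a connection $\nabla$ with divisor of poles $D$. First I would form its connection matrices by $\nabla(e_\alpha)=e_\alpha A_\alpha$ and assemble the $0$-cochain $\AAA\in\check{C}^{0}(\mathfrak U,\mathcal{E}nd(\EEE)\otimes\Omega^1(D))$. By the computation preceding the proposition, the rule (\ref{equation 6}) is equivalent to $\GGG=\check{d}\AAA$: relative to $e_\alpha$, the section $\AAA_\beta$ has matrix $g_{\alpha\beta}A_\beta g^{-1}_{\alpha\beta}$, so $(\check{d}\AAA)_{\alpha\beta}=\AAA_\beta-\AAA_\alpha$ has matrix $g_{\alpha\beta}A_\beta g^{-1}_{\alpha\beta}-A_\alpha=\ud g_{\alpha\beta}g^{-1}_{\alpha\beta}$, which is exactly the matrix of $\GGG_{\alpha\beta}$. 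Therefore $\At^{D}(\EEE)=[\GGG]=0$ in $H^{1}(X,\mathcal{E}nd(\EEE)\otimes\Omega^1(D))$.

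For the converse, assume $\At^{D}(\EEE)=0$. Since $\mathfrak U$ is taken fine enough that \v Cech cohomology computes sheaf cohomology (refining $\mathfrak U$ if necessary), the vanishing of the class produces an honest coboundary: there is $\AAA=(\AAA_\alpha)\in\check{C}^{0}(\mathfrak U,\mathcal{E}nd(\EEE)\otimes\Omega^1(D))$ with $\GGG=\check{d}\AAA$. Reading off the matrix $A_\alpha$ of $\AAA_\alpha$ in the basis $e_\alpha$, the identity $\GGG=\check{d}\AAA$ becomes, term by term, exactly (\ref{equation 6}). I would then define local connections on $U_\alpha$ by $\nabla(e_\alpha s)=e_\alpha(\ud s+A_\alpha s)$ and check that (\ref{equation 6}) is precisely the condition for them to agree on the overlaps: writing a section as $e_\alpha s_\alpha=e_\beta s_\beta$ with $s_\alpha=g_{\alpha\beta}s_\beta$, the two candidate values differ by $e_\alpha(\ud g_{\alpha\beta}+A_\alpha g_{\alpha\beta}-g_{\alpha\beta}A_\beta)s_\beta$, which vanishes by (\ref{equation 6}). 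The glued $\nabla$ is a connection on $\EEE$ with poles on $D$.

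I do not expect a serious obstacle, as the construction is short. The two places that genuinely require care are the basis-change conjugation by $g_{\alpha\beta}$ in identifying $\check{d}\AAA$ with $\GGG$ (this is exactly what forces the class to be read in the $\Omega^1(D)$-valued group, i.e.\ as $\At^{D}(\EEE)$, rather than as $\At(\EEE)$ itself), and the remark that the chosen covering is fine enough for a vanishing \v Cech class to be an actual coboundary, so that the local connection matrices genuinely exist and patch to a global $\nabla$.
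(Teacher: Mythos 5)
Your proposal is correct and follows exactly the route the paper intends: the paper states the proposition as an immediate consequence of rewriting the transition rule (\ref{equation 6}) as $\GGG=\check{d}\AAA$ in the $\Omega^1(D)$-valued \v Cech complex, and offers no further argument. Your write-up simply supplies the details of both directions (the conjugation bookkeeping identifying $(\check{d}\AAA)_{\alpha\beta}$ with $\ud g_{\alpha\beta}g^{-1}_{\alpha\beta}$, and the gluing of the local connections in the converse), all of which check out.
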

Informally speaking, this property is expressed by saying that the Atiyah class is the obstruction to the existence of 
a connection on a vector bundle.
For future use, we also provide the integrability condition of $\nabla$ in terms of the local data $A_{\alpha}$:
\begin{equation}\label{equation 7}
\ud A_{\alpha}+A_{\alpha}\wedge A_{\alpha}=0
\end{equation} 
\subsection{First order deformations of connections with fixed divisor of poles $D$}
Let $(\EEE, \nabla)$ be defined as above and $V_1=\Spec k[\epsilon]/(\epsilon^{2})$.
We represent the deformed pair $(\tilde{\EEE}, \tilde{\nabla})$ over $V_1$ by the local data
$$\tilde{g}_{\alpha \beta}=g_{\alpha \beta}+\epsilon g_{\alpha \beta,1},\ 
\tilde{A_{\alpha}}=A_{\alpha}+\epsilon A_{\alpha,1}$$ 
We have already studied the compatibility conditions which guarantee that $\tilde{g}_{\alpha \beta}$
ia a cocycle; they can be stated by saying that the cochain $a=(a_{\alpha \beta})\in\check{C}^{1}(\mathfrak U,\mathcal{E}nd(\EEE))$,
defined over $U_{\alpha \beta}$ by the matrix $g_{\alpha \beta,1}g^{-1}_{\alpha \beta}$ in the basis $e_{\alpha}$, is a cocycle.
Now, we fix this cocycle and search for a cochain $(\AAA_{\alpha,1})$ compatible with $a$. 
Expanding (\ref{equation 6}) to order $1$, we obtain:
\begin{equation}\label{equation 8}
A_{\beta,1}=g_{\beta \alpha,1}\ud g_{\alpha \beta}+g_{\beta \alpha}\ud g_{\alpha \beta,1}
+g_{\beta \alpha,1}A_{\alpha}g_{\alpha \beta}+ g_{\beta \alpha}A_{\alpha,1}g_{\alpha \beta}+
g_{\beta \alpha}A_{\alpha}g_{\alpha \beta,1}
\end{equation}
\begin{lemma}
Define the $0$-cochain $\AAA_1=(\AAA_{\alpha,1})$ in $\mathcal{E}nd(\EEE)\otimes\Omega^{1}_X (D)$ whose
matrix over $U_{\alpha}$ is $A_{\alpha,1}$ in the basis $e_{\alpha}$.
Then (\ref{equation 8}) implies:
\begin{equation}\label{equation 9}
(\check{d}\AAA_1)_{\alpha \beta}=\AAA_{\beta,1}-\AAA_{\alpha,1}=\ud a_{\alpha \beta}+[A_{\alpha}, a_{\alpha \beta}]
\end{equation}
\end{lemma}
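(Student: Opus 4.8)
The plan is to verify (\ref{equation 9}) as an identity of matrices in the basis $e_\alpha$ over $U_{\alpha \beta}$, produced directly from the first-order transition rule (\ref{equation 8}). The first step is to read both sides in a single trivialization. The subtle point is that $\AAA_{\beta,1}$ is prescribed by its matrix $A_{\beta,1}$ in the basis $e_\beta$, so to form the \v Cech difference $(\check{d}\AAA_1)_{\alpha \beta}=\AAA_{\beta,1}-\AAA_{\alpha,1}$ as a genuine section of $\mathcal{E}nd(\EEE)\otimes\Omega^{1}(D)$ over $U_{\alpha \beta}$ one must first rewrite $\AAA_{\beta,1}$ in the basis $e_\alpha$. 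Since $e_\beta=e_\alpha g_{\alpha \beta}$, the change-of-basis for endomorphisms is conjugation, so the matrix of $\AAA_{\beta,1}$ in the basis $e_\alpha$ is $g_{\alpha \beta}A_{\beta,1}g^{-1}_{\alpha \beta}$; hence the matrix of the left-hand side of (\ref{equation 9}) is $g_{\alpha \beta}A_{\beta,1}g^{-1}_{\alpha \beta}-A_{\alpha,1}$. On the right-hand side, the notation $\ud a_{\alpha \beta}+[A_\alpha,a_{\alpha \beta}]$ is exactly the matrix of $\nabla_{\mathcal{E}nd(\EEE)}(a_{\alpha \beta})$ in the basis $e_\alpha$, where $a_{\alpha \beta}$ has matrix $g_{\alpha \beta,1}g^{-1}_{\alpha \beta}$, since $\nabla_{\mathcal{E}nd(\EEE)}$ sends a section with matrix $M$ to one with matrix $\ud M+[A_\alpha,M]$.

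Next I would start from (\ref{equation 8}) and conjugate it by $g_{\alpha \beta}$, that is, multiply on the left by $g_{\alpha \beta}$ and on the right by $g^{-1}_{\alpha \beta}$, using $g_{\beta \alpha}=g^{-1}_{\alpha \beta}$ together with the first-order inversion relation (\ref{equation 1}), namely $g_{\beta \alpha,1}=-g^{-1}_{\alpha \beta}g_{\alpha \beta,1}g^{-1}_{\alpha \beta}$. After the conjugate of the $A_{\alpha,1}$-term collapses to $A_{\alpha,1}$ (which is then moved to the left), the remaining four terms of (\ref{equation 8}) regroup into two packages: the two terms carrying derivatives yield $\ud g_{\alpha \beta,1}\,g^{-1}_{\alpha \beta}-g_{\alpha \beta,1}g^{-1}_{\alpha \beta}\,\ud g_{\alpha \beta}\,g^{-1}_{\alpha \beta}$, while the two terms carrying $A_\alpha$ yield $A_\alpha\,g_{\alpha \beta,1}g^{-1}_{\alpha \beta}-g_{\alpha \beta,1}g^{-1}_{\alpha \beta}\,A_\alpha$.

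Finally I would identify each package with a term on the right of (\ref{equation 9}). The $A_\alpha$-package is visibly the commutator $[A_\alpha,\,g_{\alpha \beta,1}g^{-1}_{\alpha \beta}]$, and the derivative package is exactly $\ud(g_{\alpha \beta,1}g^{-1}_{\alpha \beta})$ by the Leibniz rule combined with $\ud(g^{-1}_{\alpha \beta})=-g^{-1}_{\alpha \beta}\,\ud g_{\alpha \beta}\,g^{-1}_{\alpha \beta}$. Together these match the matrix of $\nabla_{\mathcal{E}nd(\EEE)}(a_{\alpha \beta})$ computed above, which closes the argument. The computation itself is routine; the only real obstacle is the bookkeeping of the trivialization, i.e. remembering that the coboundary of the $0$-cochain $\AAA_1$ must be formed \emph{after} conjugating $A_{\beta,1}$ into the basis $e_\alpha$, rather than by naively subtracting the matrices $A_{\beta,1}$ and $A_{\alpha,1}$, and that the formal symbols $\ud a_{\alpha \beta}$ and $[A_\alpha,a_{\alpha \beta}]$ on the right denote the induced connection $\nabla_{\mathcal{E}nd(\EEE)}$ applied to $a_{\alpha \beta}$.
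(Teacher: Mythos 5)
Your proposal is correct and follows essentially the same route as the paper: conjugate (\ref{equation 8}) by $g_{\alpha\beta}$, read $g_{\alpha\beta}A_{\beta,1}g_{\alpha\beta}^{-1}$ and $A_{\alpha,1}$ as the matrices of $\AAA_{\beta,1}$ and $\AAA_{\alpha,1}$ in the basis $e_\alpha$, and group the remaining terms into the commutator $[A_\alpha,a_{\alpha\beta}]$ and the exact term $\ud a_{\alpha\beta}$. The only cosmetic difference is that you obtain the derivative package directly as $\ud\bigl(g_{\alpha\beta,1}g_{\alpha\beta}^{-1}\bigr)$ via $\ud(g_{\alpha\beta}^{-1})=-g_{\alpha\beta}^{-1}\,\ud g_{\alpha\beta}\,g_{\alpha\beta}^{-1}$, whereas the paper expands $\ud g_{\alpha\beta,1}$ from $g_{\alpha\beta,1}=a_{\alpha\beta}g_{\alpha\beta}$ and cancels; these are the same computation.
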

\begin{proof}
Conjugate (\ref{equation 8}) by $g_{\alpha \beta}$:
\begin{equation}\label{equation 10}
g_{\alpha \beta}A_{\beta,1}g^{-1}_{\alpha \beta}=g^{-1}_{\beta \alpha}g_{\beta \alpha,1}\ud g_{\alpha \beta}g^{-1}_{\alpha \beta}+
\ud g_{\alpha \beta,1}g^{-1}_{\alpha \beta}+g_{\alpha \beta}g_{\beta \alpha,1}A_{\alpha}+A_{\alpha,1}+
A_{\alpha}g_{\alpha \beta,1}g^{-1}_{\alpha \beta}
\end{equation}

Then $g_{\alpha \beta}A_{\beta,1}g^{-1}_{\alpha \beta}$, $A_{\alpha,1}$ are the 
matrices of $\AAA_{\beta,1},\AAA_{\alpha,1}$ respectively in the basis $e_{\alpha}$;
we will also interprete all the remaining terms of (\ref{equation 10}) as matrices of some
sections of $\mathcal{E}nd(\EEE)\otimes\Omega^{1}(D)$. We have
\begin{equation}\label{equation 11}
g^{-1}_{\beta \alpha}g_{\beta \alpha,1}=a_{\beta \alpha}=-a_{\alpha \beta} ; g_{\alpha \beta,1}g^{-1}_{\beta \alpha}=a_{\alpha \beta},
\end{equation} so that

\begin{equation}\label{equation 12}
g_{\alpha \beta}g_{\beta \alpha,1}A_{\alpha}+A_{\alpha}g_{\alpha \beta,1}g^{-1}_{\alpha \beta}=[A_{\alpha}, a_{\alpha \beta}].\end{equation}
Next, $g_{\alpha \beta,1}=a_{\alpha \beta}g_{\alpha \beta}$ , so that
\begin{equation}\label{equation 13}
\ud g_{\alpha \beta,1}=\ud a_{\alpha \beta}g_{\alpha \beta}+a_{\alpha \beta}\ud g_{\alpha \beta} 
.\end{equation}
Further, by (\ref{equation 11}), \begin{equation}\label{equation 14}
g^{-1}_{\beta \alpha}g_{\beta \alpha,1}\ud g_{\alpha \beta}g^{-1}_{\alpha \beta}=-a_{\alpha \beta}\ud g_{\alpha \beta}g^{-1}_{\alpha \beta}
\end{equation}
Combining (\ref{equation 13}), (\ref{equation 14}), we obtain
\begin{equation}\label{equation 15}
g^{-1}_{\beta \alpha}g_{\beta \alpha,1}\ud g_{\alpha \beta}g^{-1}_{\alpha \beta}+
\ud g_{\alpha \beta,1}g^{-1}_{\alpha \beta}=-a_{\alpha \beta}\ud g_{\alpha \beta}g^{-1}_{\alpha \beta}+\ud a_{\alpha \beta}+
a_{\alpha \beta}\ud g_{\alpha \beta}g^{-1}_{\alpha \beta}=\ud a_{\alpha \beta}\end{equation}

Substituing (\ref{equation 12}), (\ref{equation 15}) into (\ref{equation 10}), we obtain (\ref{equation 9}).
\end{proof}
\begin{corollary}\label{corollary 1}
The pair $(\tilde{g}_{\alpha \beta}),(\tilde{\AAA_{\alpha}})$ defines a first order deformation
of $(\EEE, \nabla)$ if and only if the cochains $a=(a_{\alpha \beta})=(g_{\alpha \beta,1}g^{-1}_{\alpha \beta}),
\AAA_{\alpha,1}=A_{\alpha,1}$ (both given in the basis $e_{\alpha}$) satisfy the relations 
$\check{d}(a_{\alpha \beta})=0, \check{d}(\AAA_{\alpha,1})=( \ud a_{\alpha \beta}+[A_{\alpha}, a_{\alpha \beta}]).$
\end{corollary}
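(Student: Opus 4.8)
The plan is to assemble the Corollary from two ingredients that are already at hand: the cocycle characterization obtained in the treatment of first order deformations of the underlying bundle $\EEE$, and the Lemma just proved, which rewrites the first order transition rule for the connection matrices as the identity (\ref{equation 9}). By definition, the local data $(\tilde{g}_{\alpha \beta},\tilde{A_{\alpha}})$ define a first order deformation of the pair $(\EEE,\nabla)$ exactly when two conditions hold: first, the $\tilde{g}_{\alpha \beta}$ satisfy the multiplicative cocycle condition modulo $\epsilon^{2}$, so that they glue to a bundle $\tilde{\EEE}$ on $X\times V_1$ restricting to $\EEE$; and second, the $\tilde{A_{\alpha}}$ obey the transition rule (\ref{equation 6}) with respect to $\tilde{g}_{\alpha \beta}$ modulo $\epsilon^{2}$, so that they glue to a connection $\tilde{\nabla}$ on $\tilde{\EEE}$ with divisor of poles $D$ restricting to $\nabla$. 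The strategy is simply to show that these two defining conditions are equivalent to $\check{d}(a_{\alpha \beta})=0$ and (\ref{equation 9}), respectively.

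First I would recall from the analysis leading to (\ref{equation 2}) that, with $a_{\alpha \beta}=g_{\alpha \beta,1}g^{-1}_{\alpha \beta}$ in the basis $e_{\alpha}$, the first order cocycle condition for $\tilde{g}_{\alpha \beta}$ is precisely the additive cocycle condition $a_{\alpha \beta}+a_{\beta \gamma}=a_{\alpha \gamma}$, i.e. $\check{d}(a_{\alpha \beta})=0$ in $\check{C}^{1}(\mathfrak U,\mathcal{E}nd(\EEE))$. This settles the first condition and simultaneously produces the bundle $\tilde{\EEE}$. For the second condition, the transition rule (\ref{equation 6}) for the deformed data, expanded to order one in $\epsilon$, is exactly (\ref{equation 8}); and the Lemma identifies (\ref{equation 8}) with (\ref{equation 9}), namely $(\check{d}\AAA_1)_{\alpha \beta}=\ud a_{\alpha \beta}+[A_{\alpha},a_{\alpha \beta}]$. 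Combining the two equivalences yields the stated biconditional.

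The one point that requires genuine care, and which I regard as the main (if modest) obstacle, is that the Lemma is phrased as a one-sided implication ``(\ref{equation 8}) implies (\ref{equation 9})'', whereas the Corollary demands an equivalence. I would therefore observe that every manipulation in the proof of the Lemma — conjugation by $g_{\alpha \beta}$ to pass from (\ref{equation 8}) to (\ref{equation 10}), and the substitutions (\ref{equation 11})--(\ref{equation 15}) — is a reversible matrix identity over $U_{\alpha \beta}$, so that (\ref{equation 8}) and (\ref{equation 9}) are in fact logically equivalent. Granting this, the ``only if'' direction reads off $\check{d}(a_{\alpha \beta})=0$ from the cocycle condition and (\ref{equation 9}) from the Lemma, while the ``if'' direction reconstructs $\tilde{\EEE}$ from $\check{d}(a_{\alpha \beta})=0$ and recovers the transition rule (\ref{equation 8}) for $\tilde{A_{\alpha}}$ from (\ref{equation 9}) by reversing the Lemma. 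Once the reversibility is noted, the Corollary is immediate.
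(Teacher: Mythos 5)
Your proof is correct and follows essentially the same route as the paper: the corollary is obtained by combining the additive cocycle condition $\check{d}(a)=0$ from the earlier analysis of first order deformations of $\EEE$ with the Lemma identifying the expanded transition rule (\ref{equation 8}) with (\ref{equation 9}). Your explicit remark that the manipulations (\ref{equation 10})--(\ref{equation 15}) are reversible, so that the Lemma's one-sided implication upgrades to the equivalence the corollary requires, is exactly the point left implicit in the paper.
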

We will interprete the latter result in terms of the induced connection on $\mathcal{E}nd{(\EEE)}$.
As we saw, given  a connection $\nabla:\EEE\rar\EEE\otimes\Omega^{1}(D)$ on $\EEE$, we can define a connection
$\nabla_{\mathcal{E}nd(\EEE)}:\mathcal{E}nd(\EEE)\rar\mathcal{E}nd(\EEE)\otimes\Omega^{1}(D)$ by
$\nabla_{\mathcal{E}nd(\EEE)}(\phi)=\nabla\circ\phi-\phi\circ\nabla.$
If we represent $\phi$ by its matrix $M_{\alpha}$ in the basis $e_{\alpha}$, then
$\nabla_{\mathcal{E}nd(\EEE)}(\phi)=\ud M_{\alpha}+[A_{\alpha}, M_{\alpha}].$
Now, we can reformulate Corollary \ref{corollary 1} as follows.
\begin{proposition}\label{proposition 3}
The first order deformations of $(\EEE, \nabla)$ with fixed divisor of poles $D$ are classified
by the pairs $(a,\AAA_1)\in \check{C}^{1}(\mathfrak U, \mathcal{E}nd(\EEE))\times\check{C}^{0}(\mathfrak U,\mathcal{E}nd(\EEE)\otimes\Omega^{1}(D))$
such that \begin{equation}\label{equation 16}\check{d}(a)=0, \check{d}(\AAA_1)=\nabla_{\mathcal{E}nd(\EEE)}(a).\end{equation}
\end{proposition}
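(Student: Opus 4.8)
The plan is to obtain Proposition \ref{proposition 3} as an immediate reformulation of Corollary \ref{corollary 1}: the first defining relation $\check{d}(a)=0$ is already present verbatim, so the only work is to recognize the right-hand side of the second relation, $\ud a_{\alpha\beta}+[A_\alpha,a_{\alpha\beta}]$, as the induced connection $\nabla_{\mathcal{E}nd(\EEE)}$ applied to the cochain $a$.

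First I would recall from Corollary \ref{corollary 1} that a pair defines a first order deformation of $(\EEE,\nabla)$ exactly when the associated cochains $a=(a_{\alpha\beta})=(g_{\alpha\beta,1}g^{-1}_{\alpha\beta})$ and $\AAA_1=(\AAA_{\alpha,1})$, both written in the basis $e_\alpha$, satisfy $\check{d}(a)=0$ together with $(\check{d}\AAA_1)_{\alpha\beta}=\ud a_{\alpha\beta}+[A_\alpha,a_{\alpha\beta}]$, this last being equation (\ref{equation 9}). This hands over the first relation of (\ref{equation 16}) with nothing further to prove.

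Next I would invoke the local description of the induced connection recorded just above the proposition: for a local section $\phi$ of $\mathcal{E}nd(\EEE)$ with matrix $M_\alpha$ in the basis $e_\alpha$, one has $\nabla_{\mathcal{E}nd(\EEE)}(\phi)=\ud M_\alpha+[A_\alpha,M_\alpha]$. Applying this componentwise to the $1$-cochain $a$, each $a_{\alpha\beta}$ being a section of $\mathcal{E}nd(\EEE)$ over $U_{\alpha\beta}$, produces a $1$-cochain $\nabla_{\mathcal{E}nd(\EEE)}(a)$ valued in $\mathcal{E}nd(\EEE)\otimes\Omega^1(D)$ whose $(\alpha\beta)$-component has matrix $\ud a_{\alpha\beta}+[A_\alpha,a_{\alpha\beta}]$ in the basis $e_\alpha$. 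Comparing with (\ref{equation 9}) then gives $\check{d}(\AAA_1)=\nabla_{\mathcal{E}nd(\EEE)}(a)$, which is precisely the second relation of (\ref{equation 16}), completing the equivalence.

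The one point that deserves care, and the only, modest obstacle, is the bookkeeping check that the componentwise formula really computes $\nabla_{\mathcal{E}nd(\EEE)}(a_{\alpha\beta})$: over $U_{\alpha\beta}$ the matrix $g_{\alpha\beta,1}g^{-1}_{\alpha\beta}$ of $a_{\alpha\beta}$ and the connection matrix $A_\alpha$ must be taken in one and the same basis $e_\alpha$ for the intrinsic operator $\nabla\circ\phi-\phi\circ\nabla$ to be evaluated correctly. Since every term in equations (\ref{equation 9})--(\ref{equation 15}) is already expressed in $e_\alpha$, this consistency is automatic and no additional computation is required; Proposition \ref{proposition 3} is thus the basis-free restatement of Corollary \ref{corollary 1}.
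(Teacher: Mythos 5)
Your proposal is correct and follows exactly the paper's route: the paper states Proposition \ref{proposition 3} as an immediate reformulation of Corollary \ref{corollary 1}, using the local formula $\nabla_{\mathcal{E}nd(\EEE)}(\phi)=\ud M_{\alpha}+[A_{\alpha},M_{\alpha}]$ recorded just before the proposition to identify the right-hand side of (\ref{equation 9}) with $\nabla_{\mathcal{E}nd(\EEE)}(a)$. Your added remark on the consistency of working throughout in the basis $e_{\alpha}$ is the only point the paper leaves implicit, and it is handled correctly.
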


Now, let us assume in addition that the initial connection is integrable. Then the condition that
the deformed connection $(\tilde{\EEE}, \tilde{\nabla})$, given by the data $(a,\AAA_1)$ as in Proposition \ref{proposition 3}
, remains integrable, can be written in the form:
\begin{equation}\label{equation h}\ud A_{\alpha,1}=-A_{\alpha,1}\wedge A_{\alpha}-A_{\alpha}\wedge A_{\alpha,1},\end{equation}
or in an invariant form, $\nabla_{\mathcal{E}nd(\EEE)}(A_1)=0$.
We remark that here we consider $\nabla_{\mathcal{E}nd(\EEE)}$ extended to $\mathcal{E}nd(\EEE)\otimes\Omega^{\fatdot}(*D)$ in the
same way as was explained for $\nabla=\nabla_{\EEE}$.
\begin{proposition}\label{proposition 4}
The first order deformations of integrable connections $(\EEE, \nabla)$ with fixed divisor of poles $D$
are classified by the pairs $(a, \AAA_1)$ as above satisfying three relations
\begin{equation}\label{equation 17}
\check{d}(a)=0, \check{d}(\AAA_1)=\nabla_{\mathcal{E}nd(\EEE)}(a), \nabla_{\mathcal{E}nd(\EEE)}(\AAA_1)=0.
\end{equation}
\end{proposition}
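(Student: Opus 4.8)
The plan is to append a single equation to the two relations furnished by Proposition \ref{proposition 3}. By that proposition the conditions $\check{d}(a)=0$ and $\check{d}(\AAA_1)=\nabla_{\mathcal{E}nd(\EEE)}(a)$ of (\ref{equation 16}) are exactly what is needed for the data $(\tilde{g}_{\alpha \beta},\tilde{A}_{\alpha})$ to define a first order deformation of the pair $(\EEE,\nabla)$ with divisor of poles $D$. So it remains only to translate the extra requirement that $\tilde{\nabla}$ be integrable into a third relation and to identify that relation with $\nabla_{\mathcal{E}nd(\EEE)}(\AAA_1)=0$. Since integrability constrains the connection matrices alone, I expect it to bear on $\AAA_1$ only, leaving the two conditions on $a$ and on $\check{d}(\AAA_1)$ untouched.

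First I would write out the integrability of $\tilde{\nabla}$ through the local criterion (\ref{equation 7}) applied to $\tilde{A}_{\alpha}=A_{\alpha}+\epsilon A_{\alpha,1}$, and expand $\ud\tilde{A}_{\alpha}+\tilde{A}_{\alpha}\wedge\tilde{A}_{\alpha}$ in powers of $\epsilon$. The coefficient of $\epsilon^{0}$ is $\ud A_{\alpha}+A_{\alpha}\wedge A_{\alpha}$, which vanishes because the undeformed $\nabla$ is integrable. The coefficient of $\epsilon^{1}$ is $\ud A_{\alpha,1}+A_{\alpha,1}\wedge A_{\alpha}+A_{\alpha}\wedge A_{\alpha,1}$, so requiring it to vanish is precisely (\ref{equation h}). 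Because $\tilde{A}_{\alpha}$ obeys the transition rule (\ref{equation 8}), the first order part of the curvature is a global section transforming by conjugation, and $\tilde{\nabla}^{2}=0$ holds if and only if (\ref{equation h}) holds on every $U_{\alpha}$; this is a consistent, frame independent demand.

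Next I would cast (\ref{equation h}) in invariant form. Recalling that $\nabla_{\mathcal{E}nd(\EEE)}$ extends to $\mathcal{E}nd(\EEE)\otimes\Omega^{\fatdot}(*D)$ by the graded Leibniz rule, its value on a $1$-form valued endomorphism with matrix $A_{\alpha,1}$ in the basis $e_{\alpha}$ is $\ud A_{\alpha,1}+A_{\alpha}\wedge A_{\alpha,1}+A_{\alpha,1}\wedge A_{\alpha}$. This coincides with the left hand side of (\ref{equation h}) once the latter is rearranged, so (\ref{equation h}) says exactly $\nabla_{\mathcal{E}nd(\EEE)}(\AAA_{\alpha,1})=0$ on each $U_{\alpha}$, that is $\nabla_{\mathcal{E}nd(\EEE)}(\AAA_1)=0$ for the $0$-cochain $\AAA_1$. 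Together with (\ref{equation 16}) this gives the three relations (\ref{equation 17}).

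The step I expect to demand the most care is the bookkeeping of the graded signs: the ordinary commutator with $A_{\alpha}$ that governs $\nabla_{\mathcal{E}nd(\EEE)}$ on $0$-forms in (\ref{equation 9}) must be replaced on $1$-forms by the graded commutator $\omega\mapsto A_{\alpha}\wedge\omega+\omega\wedge A_{\alpha}$, whose relative sign is reversed, and a slip here would alter (\ref{equation 17}). A secondary point is to note that $\nabla_{\mathcal{E}nd(\EEE)}(\AAA_1)=0$ is imposed component by component on the $0$-cochain $\AAA_1$, so that matching the local expression (\ref{equation h}) on each $U_{\alpha}$ suffices, with no further gluing hypothesis on the $\AAA_{\alpha,1}$.
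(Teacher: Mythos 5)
Your argument is correct and follows essentially the same route as the paper, which derives the third relation by expanding the local integrability condition (\ref{equation 7}) for $\tilde{A}_{\alpha}=A_{\alpha}+\epsilon A_{\alpha,1}$, isolating the $\epsilon$-coefficient to obtain (\ref{equation h}), and restating it invariantly as $\nabla_{\mathcal{E}nd(\EEE)}(\AAA_1)=0$ via the extension of $\nabla_{\mathcal{E}nd(\EEE)}$ to $\mathcal{E}nd(\EEE)\otimes\Omega^{\fatdot}(*D)$. Your explicit check that the graded commutator $\omega\mapsto A_{\alpha}\wedge\omega+\omega\wedge A_{\alpha}$ is the correct form of the induced connection on $1$-form valued endomorphisms supplies a detail the paper leaves implicit, but it is the same proof.
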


\subsection{Hypercohomology}
Let $K^{\fatdot}=(K^p,d_K)$ be a complex of sheaves over $X$, and $\mathfrak U=(U_{\alpha})$ a sufficiently fine open
covering of $X$. The $\check{C}$ech complex of $K^{\bullet}$ is the double complex \begin{equation}\label{equation 18} 
(\check{C}^{p}(\mathfrak U, K^q), \check{d}, (-1)^{p} d_K).\end{equation}
The hypercohomology group $\HH^{i}(X, K^{\bullet})$ is by definition the $i$-th cohomology of the simple complex $(L^{\bullet}, D)$
associated to (\ref{equation 18}):
$$L^{n}=\oplus_{p+q=n} \check{C}^p(\mathfrak U, K^q), D_{\vert_{\check{C}^p(\mathfrak U, K^q)}}=\check{d}+(-1)^{p} d_K,$$
$$\HH^{i}(X,K^{\fatdot}):=H^{i}(L^{\fatdot}, D).$$
A hypercohomology class $c\in\HH^{i}(X,K^{\bullet})$ is represented by a cocycle $c\in L^i$,
$c=(\dots,c^{p-1, q+1}, c^{p, q},c^{p+1, q-1}, \dots)$, where $p+q=i$, and the cocycle condition is
$(\dots, \check{d}c^{p-1, q+1}+(-1)^{p}d_K c^{p,q}=0, \check{d}c^{p,q}+(-1)^{p+1}d_K c^{p+1,q-1}=0,\dots).$
A cocycle $(c^{p,q})_{p+q=n}$ is a coboundary if there exists a cochain $(b^{p,q})_{p+q=n-1}$ such that
$$c^{p,q}=\check{d}b^{p-1,q}+(-1)^{p}d_K b^{p,q-1}.$$
We denote the $i$-cocycles $\check{Z}^{i}(\mathfrak U, K^{\bullet})$ and the $i$-coboundaries $\check{B}^{i}(\mathfrak U, K^{\bullet})$,
so that $$\HH^{i}(X, K^{\fatdot})=\check{Z}^{i}(\mathfrak U, K^{\fatdot})/\check{B}^{i}(\mathfrak U, K^{\fatdot}).$$
Let now come back to the setting of Proposition \ref{proposition 3}. Define the two-term complex of sheaves
\begin{equation}\label{equation c} \CCC^{\fatdot}=[\CCC^0\rar\CCC^1],\end{equation} where $\CCC^0=\mathcal{E}nd(\EEE)$, $\CCC^1=\mathcal{E}nd(\EEE)\otimes\Omega^1(D)$, and differential $d_{\CCC}=\nabla_{\mathcal{E}nd(\EEE)}.$
Then the equations (\ref{equation 16}) express the fact that  $(a, \AAA_1)\in \check{Z}^{1}(\mathfrak U, \CCC^{\fatdot}).$
Changing the bases $e_{\alpha}$ over $V_1=\Spec k[\epsilon]/(\epsilon^2)$ by the rule $\tilde{e}_{\alpha}=e_{\alpha}(1+\epsilon h_{\alpha})$,
where $h=(h_{\alpha})\in\check{C}^{0}(\mathfrak U, \mathcal{E}nd(\EEE))=\check{C}^{0}(\mathfrak U, \CCC^0)$, we obtain the transformation rule
of the cocycle $(a, \AAA_1)$ in the following form:
$(a,\AAA_1)\rar(a+\check{d}h, \AAA_1+d_{\CCC}h)$, so that isomorphic first order deformations differ by a $1$-coboundary.
We deduce:
\begin{theorem}\label{theorem 1.1}
Let $X$ be a complete scheme of finite type over $k$ or a complex space (then $k=\CC$). Let $\EEE$ be a vector bundle on $X$
and $\nabla$ a rational (or meromorphic) connection on $\EEE$ with divisor of poles $D$.
Then the isomorphism classes of first order deformations of $(\EEE, \nabla)$ with fixed divisor of poles are classified by 
$\HH^{1}(X, \CCC^{\fatdot})$.
\end{theorem}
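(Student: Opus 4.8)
The plan is to obtain the theorem almost for free from Proposition~\ref{proposition 3}, by recognizing its two conditions (\ref{equation 16}) as the degree-one cocycle condition for the total $\check{C}$ech complex (\ref{equation 18}) of $\CCC^{\fatdot}$, and matching the ambiguity of trivializations with the degree-one coboundaries. First I would unwind $\HH^1(X, \CCC^{\fatdot})$ for the two-term complex (\ref{equation c}), in which $\CCC^0 = \mathcal{E}nd(\EEE)$ and $\CCC^1 = \mathcal{E}nd(\EEE)\otimes\Omega^1(D)$. A degree-one element of the associated simple complex is a pair $(c^{1,0}, c^{0,1})$ with $c^{1,0}\in\check{C}^1(\mathfrak U, \CCC^0)$ and $c^{0,1}\in\check{C}^0(\mathfrak U, \CCC^1)$; applying $D = \check{d} + (-1)^p d_{\CCC}$ gives three degree-two components, the one valued in $\check{C}^0(\mathfrak U, \CCC^2) = 0$ being vacuous, while the remaining two read $\check{d}c^{1,0} = 0$ and $\check{d}c^{0,1} = d_{\CCC}c^{1,0}$. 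Setting $c^{1,0} = a$, $c^{0,1} = \AAA_1$ and $d_{\CCC} = \nabla_{\mathcal{E}nd(\EEE)}$, these are exactly (\ref{equation 16}), so Proposition~\ref{proposition 3} already puts the first order deformations of $(\EEE, \nabla)$ with fixed $D$ in bijection with $\check{Z}^1(\mathfrak U, \CCC^{\fatdot})$.

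The step I expect to require the most care is identifying the coboundaries, i.e. checking that two cocycles yield isomorphic deformations precisely when they differ by $Dh$ for some $h = (h_\alpha)\in\check{C}^0(\mathfrak U, \CCC^0) = L^0$. The bundle datum is treated as in Section~\ref{Kvb} and transforms by $a \mapsto a + \check{d}h$ under the gauge change $\tilde{e}_\alpha = e_\alpha(1 + \epsilon h_\alpha)$; the genuinely new computation is the effect on the connection datum. Here I would substitute $\tilde{e}_\alpha = e_\alpha(1 + \epsilon h_\alpha)$ into $\tilde{\nabla}(\tilde{e}_\alpha) = \tilde{e}_\alpha \tilde{A}'_\alpha$, expand with the Leibniz rule modulo $\epsilon^2$, and solve for the new first-order matrix, obtaining $A'_{\alpha,1} = A_{\alpha,1} + \ud h_\alpha + [A_\alpha, h_\alpha]$. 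Since $\ud h_\alpha + [A_\alpha, h_\alpha]$ is by definition the matrix of $\nabla_{\mathcal{E}nd(\EEE)}(h_\alpha)$ in the basis $e_\alpha$, this says $\AAA_1 \mapsto \AAA_1 + d_{\CCC}h$. Hence the joint transformation is $(a, \AAA_1) \mapsto (a + \check{d}h, \AAA_1 + d_{\CCC}h) = (a, \AAA_1) + Dh$, exactly the degree-one coboundary attached to $h\in L^0$.

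Finally I would assemble the two computations: the isomorphism classes of first order deformations are $\check{Z}^1(\mathfrak U, \CCC^{\fatdot})$ modulo the image of $D$ on $L^0$, that is $\check{Z}^1(\mathfrak U, \CCC^{\fatdot})/\check{B}^1(\mathfrak U, \CCC^{\fatdot}) = \HH^1(X, \CCC^{\fatdot})$, which is the assertion. The only standing hypothesis to keep in view is that $\mathfrak U$ be fine enough for the $\check{C}$ech double complex to compute the hypercohomology.
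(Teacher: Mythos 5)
Your proposal is correct and follows essentially the same route as the paper: identify the two conditions of Proposition~\ref{proposition 3} with the degree-one cocycle condition for the simple complex associated to the \v Cech double complex of $\CCC^{\fatdot}$, and match the gauge change $\tilde{e}_{\alpha}=e_{\alpha}(1+\epsilon h_{\alpha})$ with the coboundary $(\check{d}h, d_{\CCC}h)$. You actually supply slightly more detail than the paper does on the transformation $A'_{\alpha,1}=A_{\alpha,1}+\ud h_{\alpha}+[A_{\alpha},h_{\alpha}]$, which the paper only asserts.
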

In order to characterize the first order deformations of integrable connections, we introduce two other
complexes:
$$\RRR^{\fatdot}=[\mathcal{E}nd(\EEE)\rar\mathcal{E}nd(\EEE)\otimes\Omega^{1}(D)\rar\mathcal{E}nd(\EEE)\otimes\Omega^{2}(*D)\rar\dots]$$
with differential $d_{\RRR}=\nabla_{\End(\EEE)}$, and
\begin{equation} \FFF^{\fatdot}=[\FFF^0\xymatrix@1{\ar[r]^{d_{\FFF}}&\FFF^{1}]},\end{equation}
where $\FFF^0=\mathcal{E}nd(\EEE)$, $d_{\FFF}=\nabla_{\mathcal{E}nd(\EEE)}$, and $\FFF^{1}=\ker(\mathcal{E}nd(\EEE)\otimes\Omega^{1}(D))\rar\mathcal{E}nd(\EEE)\otimes\Omega^{2}(*D))$.
It is easy to see that these complexes have the same $1$-cocycles and $1$-coboundaries, so that
$$\HH^{1}(X, \FFF^{\fatdot})=\HH^{1}(X, \RRR^{\fatdot}).$$
The formulas (\ref{equation 14}) express the fact that the pair $(a, \AAA_1)$ is a $1$-cocycle in 
either one of the complexes $\FFF^{\fatdot}, \RRR^{\fatdot}$.
\begin{theorem}\label{theorem 1.2}
Let $X$ be a scheme of finite type over $k$ or a complex space (then $k=\CC$). Let $\EEE$ a vector bundle on $X$
and $\nabla$ a rational (or meromorphic) integrable connection on $\EEE$ with fixed divisor of poles $D$.
Then the isomorphism classes of first order deformations of $(\EEE, \nabla)$ in the class of integrable
connections with fixed divisor of poles $D$ are classified by 
$$\HH^{1}(X, \FFF^{\fatdot})=\HH^{1}(X, \RRR^{\fatdot}).$$
\end{theorem}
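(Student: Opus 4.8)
The plan is to reduce the statement to the explicit classification of integrable first order deformations already obtained in Proposition~\ref{proposition 4}, and then to recognize the three relations (\ref{equation 17}) as precisely the degree-one cocycle conditions for the two complexes $\RRR^\fatdot$ and $\FFF^\fatdot$. Granting Proposition~\ref{proposition 4}, the only real content is a matching of data at the level of the double complex.

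First I would unwind $\HH^1(X,\RRR^\fatdot)$ through the double complex $(\check{C}^p(\mathfrak U,\RRR^q),\check{d},(-1)^p\nabla_{\End(\EEE)})$ of the shape (\ref{equation 18}). A total $1$-cochain is a pair $(c^{1,0},c^{0,1})$ with $c^{1,0}\in\check{C}^1(\mathfrak U,\mathcal{E}nd(\EEE))$ and $c^{0,1}\in\check{C}^0(\mathfrak U,\mathcal{E}nd(\EEE)\otimes\Omega^1(D))$. Applying the total differential $D=\check{d}+(-1)^p\nabla_{\End(\EEE)}$ and separating the image into its components in $\check{C}^2(\RRR^0)$, $\check{C}^1(\RRR^1)$ and $\check{C}^0(\RRR^2)$, the vanishing $D(c^{1,0}+c^{0,1})=0$ becomes the three equations $\check{d}(a)=0$, $\check{d}(\AAA_1)=\nabla_{\End(\EEE)}(a)$ and $\nabla_{\End(\EEE)}(\AAA_1)=0$ after writing $a=c^{1,0}$ and $\AAA_1=c^{0,1}$. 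These are exactly the relations (\ref{equation 17}) of Proposition~\ref{proposition 4}, so the $1$-cocycles of $\RRR^\fatdot$ are identified with the pairs classifying integrable first order deformations.

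Next I would compare $\RRR^\fatdot$ with $\FFF^\fatdot$ in degree one. Since $\FFF^\fatdot$ is a two-term complex, its degree-one cocycle conditions are only $\check{d}(a)=0$ and $\check{d}(\AAA_1)=d_\FFF(a)$; but the very definition $\FFF^1=\ker(\mathcal{E}nd(\EEE)\otimes\Omega^1(D)\rar\mathcal{E}nd(\EEE)\otimes\Omega^2(*D))$ forces a cochain valued in $\FFF^1$ to satisfy $\nabla_{\End(\EEE)}(\AAA_1)=0$, which is the third relation of (\ref{equation 17}); and integrability, i.e. $\nabla_{\End(\EEE)}^2=0$, ensures that $d_\FFF(a)=\nabla_{\End(\EEE)}(a)$ lands in $\FFF^1$, so that $d_\FFF$ is well defined. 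Hence the $1$-cocycles of $\FFF^\fatdot$ are the same pairs $(a,\AAA_1)$ as those of $\RRR^\fatdot$. For the $1$-coboundaries, in both complexes the relevant degree-zero term is $\check{C}^0(\mathfrak U,\mathcal{E}nd(\EEE))$ and $D$ sends $h$ to $(\check{d}h,\nabla_{\End(\EEE)}(h))$, with $\nabla_{\End(\EEE)}(h)\in\FFF^1$ automatically; this is precisely the change-of-basis rule $(a,\AAA_1)\mapsto(a+\check{d}h,\AAA_1+d_\CCC h)$ recorded before Theorem~\ref{theorem 1.1}. Thus cocycles and coboundaries agree, giving $\HH^1(X,\FFF^\fatdot)=\HH^1(X,\RRR^\fatdot)$ and realizing this common group as the quotient that classifies integrable first order deformations.

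The step requiring genuine care is the sign bookkeeping in extracting the $\check{C}^1(\RRR^1)$ component of $D(c^{1,0}+c^{0,1})$: it is there that the mixed term $-\nabla_{\End(\EEE)}(a)+\check{d}(\AAA_1)$ appears, and one must check it reproduces the second relation of (\ref{equation 17}) with the correct sign, the factor $(-1)^p$ in the total differential accounting for the minus in front of $\nabla_{\End(\EEE)}(a)$. Everything else is a direct comparison of the cocycle and coboundary conditions with the data furnished by Proposition~\ref{proposition 4}.
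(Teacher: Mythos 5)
Your proposal is correct and follows essentially the same route as the paper: the paper also derives the theorem by observing that the three relations (\ref{equation 17}) of Proposition~\ref{proposition 4} are exactly the total degree-one cocycle conditions for $\RRR^{\fatdot}$ (equivalently $\FFF^{\fatdot}$, since the two complexes share the same $1$-cocycles and $1$-coboundaries), and that isomorphic deformations differ by the coboundary $(\check{d}h,\nabla_{\mathcal{E}nd(\EEE)}h)$ coming from the change of basis $\tilde e_\alpha=e_\alpha(1+\epsilon h_\alpha)$. Your explicit sign check on the mixed $\check{C}^1(\RRR^1)$ component and the remark that integrability is what makes $d_{\FFF}$ land in $\FFF^1$ are welcome details that the paper leaves implicit.
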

\section{Obstructions}\label{Obstruc}
\subsection{First obstruction}
Let $X, \EEE, \nabla,(a, \AAA_1)$ be as in Theorem \ref{theorem 1.1}, and let $(\EEE_1,\nabla_1)$ be the
first order deformation of $(\EEE, \nabla)$ over $V_1$ associated to $(a,\AAA_1)$. We want to determine the obstruction
to extend $(\EEE_1,\nabla_1)$ to $(\EEE_2,\nabla_2)$ over $V_2=\Spec k[\epsilon]/(\epsilon^3)$.
As before, we only consider deformations with fixed divisor of poles $D$.
We search for the extended data
$$G_{\alpha \beta}=(1+\epsilon a_{\alpha \beta}+\epsilon^2 a_{\alpha\beta,2})g_{\alpha \beta}=g_{\alpha \beta}+\epsilon g_{\alpha \beta,1}+\epsilon^2 g_{\alpha \beta,2}$$
$$\tilde{A_{\alpha}}=A_{\alpha}+\epsilon A_{\alpha,1}+\epsilon^2 A_{\alpha,2},\ \AAA_{\alpha,1}=A_{\alpha,1},$$
with respect to the basis $e_{\alpha}$.
We assume that they satisfy the cocycle condition modulo $\epsilon^2$. Then the cocycle condition modulo $\epsilon^3$ has two
counterparts: the one expressing the extendability of $\EEE_1$, which we have already treated in Section $2$, and the other expressing the
extendability of the connection. The latter has the following form: 
\begin{eqnarray}\label{eqnarray.3..}%
A_{\beta,2}=g_{\beta \alpha,2}\ud g_{\alpha \beta}+g_{\beta \alpha,1}\ud g_{\alpha \beta,1}+g_{\beta \alpha}\ud g_{\alpha \beta,2}& \\ \nonumber
+g_{\beta \alpha,2}A_{\alpha}g_{\alpha \beta}+g_{\beta \alpha}A_{\alpha,2}g_{\alpha \beta}+g_{\beta \alpha}A_{\alpha}g_{\alpha \beta,2}& \\ \nonumber+ g_{\beta \alpha,1}A_{\alpha,1}g_{\alpha \beta}+g_{\beta \alpha,1}A_{\alpha}g_{\alpha \beta,1}+g_{\beta \alpha}A_{\alpha,1}g_{\alpha \beta,1}
\end{eqnarray}
Introduce the cochain $\AAA_2\in\check{C}^0(\mathfrak U, \mathcal{E}nd(\EEE)\otimes\Omega^1 (D))$ given  over $U_{\alpha}$ by the matrix $A_{\alpha,2}$ in the basis $e_{\alpha}$. By transformations similar to those used in the proof of $(10)$, and in using formulas $(22)$ and
$a_{\beta \alpha,2}-(a_{\alpha \beta,1})^{2}+a_{\alpha \beta,2}=0$, we reduce (\ref{eqnarray.3..}) to the following equation:
\begin{eqnarray}\label{eqnarray.4..}%
\nabla_{\mathcal{E}nd(\EEE)}(a_{\alpha \beta,2})-\nabla_{\mathcal{E}nd(\EEE)}(a_{\alpha \beta,1})a_{\alpha \beta,1}-[a_{\alpha \beta,1},\AAA_{\beta,1}]& \\ \nonumber =\nabla_{\mathcal{E}nd(\EEE)}(a_{\alpha \beta,2})+\AAA_{\alpha,1}a_{\alpha \beta,1}-a_{\alpha \beta,1}\AAA_{\beta,1} =\AAA_{\beta,2}-\AAA_{\alpha,2}
\end{eqnarray}
Let us denote \begin{equation}\label{equation 19}k_{\alpha \beta}=\nabla_{\mathcal{E}nd(\EEE)}(a_{\alpha \beta,2})+\AAA_{\alpha,1}a_{\alpha \beta,1}-a_{\alpha \beta,1}\AAA_{\beta,1}.\end{equation}
We consider $k=(k_{\alpha \beta})$ as a cochain in $\check{C}^1(\mathfrak U, \mathcal{E}nd(\EEE)\otimes\Omega^1 (D)).$
\begin{lemma}
$k$ is a skew-symmetric cocycle.
\end{lemma}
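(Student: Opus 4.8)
The plan is to verify two things about the cochain $k=(k_{\alpha \beta})$ defined in (\ref{equation 19}): that it is skew-symmetric, and that it is $\check{d}$-closed in the sheaf $\mathcal{E}nd(\EEE)\otimes\Omega^1(D)$. For skew-symmetry, I would exploit the relation $a_{\beta \alpha,2}-(a_{\alpha \beta,1})^{2}+a_{\alpha \beta,2}=0$ already used to derive (\ref{eqnarray.4..}), together with the first-order skew-symmetry $a_{\beta \alpha,1}=-a_{\alpha \beta,1}$ from (\ref{equation 11}). The point is that $k_{\alpha \beta}$ is built from the second-order term $\nabla_{\mathcal{E}nd(\EEE)}(a_{\alpha \beta,2})$ plus the quadratic correction $\AAA_{\alpha,1}a_{\alpha \beta,1}-a_{\alpha \beta,1}\AAA_{\beta,1}$; swapping $\alpha \leftrightarrow \beta$ and applying $\nabla_{\mathcal{E}nd(\EEE)}$ to the quadratic relation for $a_{\cdot,2}$ should produce $k_{\beta \alpha}=-k_{\alpha \beta}$ after the squared term is absorbed. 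I expect this to be a direct, if slightly tedious, algebraic manipulation.

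For the cocycle property, the cleanest strategy mirrors the proof of Lemma \ref{lemma 1}: rather than computing $\check{d}k$ head-on, I would recognize $k$ as the genuinely obstruction-theoretic object sitting in the connection-component of the hypercohomology cocycle condition. Concretely, the pair $(K_2, k)$---where $K_2$ is the bundle-side obstruction from Section \ref{Kvb} and $k$ is the connection-side obstruction just introduced---should form a $2$-cocycle in the total complex associated to the double complex (\ref{equation 18}) for $\CCC^{\fatdot}$. The equations defining a hypercohomology $2$-cocycle are $\check{d}K_2 + d_{\CCC}(\text{something}) = 0$ and $\check{d}k = 0$ (the top-degree component), so proving $\check{d}k=0$ is exactly the statement that the $(1,1)$-level Čech differential of $k$ vanishes. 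I would therefore write out $(\check{d}k)_{\alpha \beta \gamma}=k_{\beta \gamma}-k_{\alpha \gamma}+k_{\alpha \beta}$, substitute (\ref{equation 19}), and group terms into those involving $\nabla_{\mathcal{E}nd(\EEE)}$ applied to $\check{d}a_2$ and those that are purely quadratic in the first-order data $a_{\cdot,1}, \AAA_{\cdot,1}$.

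The quadratic terms are where the main obstacle lies. After applying $\nabla_{\mathcal{E}nd(\EEE)}$, one must use the Leibniz rule $\nabla_{\mathcal{E}nd(\EEE)}(\phi\psi)=\nabla_{\mathcal{E}nd(\EEE)}(\phi)\psi \pm \phi\,\nabla_{\mathcal{E}nd(\EEE)}(\psi)$ to move the connection past the products $\AAA_{\alpha,1}a_{\alpha \beta,1}$, and then invoke the first-order cocycle relation $\check{d}(\AAA_1)=\nabla_{\mathcal{E}nd(\EEE)}(a)$ from (\ref{equation 16}) to rewrite $\nabla_{\mathcal{E}nd(\EEE)}(a_{\alpha \beta,1})$ in terms of $\AAA_{\beta,1}-\AAA_{\alpha,1}$. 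The expectation is that these substitutions, combined with the additive $1$-cocycle identity $a_{\alpha \beta,1}+a_{\beta \gamma,1}=a_{\alpha \gamma,1}$ (equation (\ref{equation 2}) transcribed to $\mathcal{E}nd(\EEE)$), force a complete cancellation of the quadratic triple products. The delicate bookkeeping is keeping the noncommutativity of the $\mathcal{E}nd(\EEE)$-valued sections straight while the wedge product on $\Omega^1(D)$ interacts with the sign conventions in $D=\check{d}+(-1)^p d_{\CCC}$; I anticipate that the hard part is ensuring every cross-term pairing an $\AAA_{\cdot,1}$ factor against an $a_{\cdot,1}$ factor finds its exact opposite, which is what ultimately realizes $k$ as representing the connection-component of the Yoneda-square obstruction in $\HH^2(X, \CCC^{\fatdot})$.
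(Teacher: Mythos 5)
Your proposal is correct and follows essentially the same route as the paper, whose proof is just the one-line remark that the claim is a direct computation using the relation $a_{\alpha \beta,2}+a_{\beta \gamma,2}+a_{\gamma \alpha,2}=-a_{\alpha \beta,1}a_{\beta \gamma,1}-a_{\beta \gamma,1}a_{\gamma \alpha,1}-a_{\alpha \beta,1}a_{\gamma \alpha,1}$ together with the Leibniz rule for $\nabla_{\mathcal{E}nd(\EEE)}$; your plan of substituting these, invoking $\check{d}(\AAA_1)=\nabla_{\mathcal{E}nd(\EEE)}(a_1)$ and $\check{d}(a_1)=0$, and watching the quadratic cross-terms collapse to $[\AAA_{\alpha,1},\,a_{\alpha\beta,1}+a_{\beta\gamma,1}+a_{\gamma\alpha,1}]=0$ is exactly that computation spelled out. (The only inessential point is your worry about wedge-product signs: no wedge products occur here, since each product pairs a $1$-form-valued term with a $0$-form-valued endomorphism.)
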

\begin{proof}
A straightforward calculation using the relations
\begin{eqnarray}\label{eqnarray.5..}%
a_{\alpha \beta,2}+a_{\beta \gamma,2}+a_{\gamma \alpha,2}=-a_{\alpha \beta,1}a_{\beta \gamma,1}-a_{\beta \gamma,1}a_{\gamma \alpha,1}-a_{\alpha \beta,1}a_{\gamma \alpha,1}& \\ \nonumber \end{eqnarray}
and $\nabla_{\mathcal{E}nd(\EEE)}(XY)=\nabla_{\mathcal{E}nd(\EEE)}(X)Y+Y\nabla_{\mathcal{E}nd(\EEE)}(X)$,
for any local sections $X,Y$ of $\mathcal{E}nd(\EEE)$
\end{proof}

\begin{proposition}\label{proposition 1.2}
Let $(a,\AAA_1)\in\check{Z}^{1}(\mathfrak U, \CCC^{\fatdot})$, and let $(\EEE_1,\nabla_1)$ be the deformation 
of $(\EEE, \nabla)$ over $V_1$ defined by $(a, \AAA_1)$. Then $(\EEE_1,\nabla_1)$ extends to a deformation
$(\EEE_2,\nabla_2)$ over $V_2$ if and only if the following two conditions are verified:\\
$(i)$ The Yoneda square $[a_1]\circ[a_1]\in H^{2}(X,\mathcal{E}nd(\EEE))$ vanishes.\\
$(ii)$ Provided $(i)$ holds, let $a_2=(a_{\alpha \beta,2})\in\check{C}^{1}(\mathfrak U,\mathcal{E}nd(\EEE))$
be a solution of (\ref{eqnarray.5..}), and let $k=(k_{\alpha \beta})$ be the cocycle (\ref{equation 19}) determined by this choice of $a_2$. Then
$[k]\in H^{1}(X,\mathcal{E}nd(\EEE)\otimes\Omega^{1}(D))$ vanishes.
\end{proposition}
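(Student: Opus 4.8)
The plan is to analyze the cocycle condition for the extended data modulo $\epsilon^3$ and to observe that it decouples into a purely bundle-theoretic condition and a connection condition. An extension $(\EEE_2,\nabla_2)$ over $V_2$ amounts to transition matrices $G_{\alpha \beta}=(1+\epsilon a_{\alpha \beta}+\epsilon^2 a_{\alpha \beta,2})g_{\alpha \beta}$ forming a multiplicative $1$-cocycle modulo $\epsilon^3$, together with connection matrices $\tilde{A}_{\alpha}=A_{\alpha}+\epsilon A_{\alpha,1}+\epsilon^2 A_{\alpha,2}$ satisfying the transition rule (\ref{equation 6}) modulo $\epsilon^3$. The first requirement involves only the $a_{\alpha \beta,\bullet}$, while the second additionally constrains the $A_{\alpha,\bullet}$; I would treat the two in turn.

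For the bundle part, the analysis is exactly that of Section \ref{Kvb}. By Proposition \ref{proposition 1}, the bundle $\EEE_1$ extends to $\EEE_2$ over $V_2$ if and only if the Yoneda square $[a_1]\circ[a_1]$ vanishes in $H^{2}(X,\mathcal{E}nd(\EEE))$; this is condition $(i)$. When $(i)$ holds, the computation leading to (\ref{equation a}) shows that the relevant obstruction cocycle is a coboundary, so one may solve the relation (\ref{eqnarray.5..}) for a skew-symmetric $1$-cochain $a_2=(a_{\alpha \beta,2})$. Any such choice of $a_2$ pins down the second-order extension of the transition functions, hence fixes the bundle $\EEE_2$.

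For the connection part, I would expand the transition rule (\ref{equation 6}) to second order in $\epsilon$, obtaining (\ref{eqnarray.3..}). Conjugating by $g_{\alpha \beta}$ and rewriting each term as the matrix of a section of $\mathcal{E}nd(\EEE)\otimes\Omega^{1}(D)$ in the basis $e_{\alpha}$---using the relations (\ref{equation 11}) together with the defining relation (\ref{eqnarray.5..}) for $a_2$ to absorb the contributions coming from the bundle---reduces (\ref{eqnarray.3..}) to the cohomological form (\ref{eqnarray.4..}), namely $\check{d}(\AAA_2)=k$ with $k=(k_{\alpha \beta})$ given by (\ref{equation 19}). By the preceding Lemma, $k$ is a cocycle, so the class $[k]\in H^{1}(X,\mathcal{E}nd(\EEE)\otimes\Omega^{1}(D))$ is well defined, and a $0$-cochain $\AAA_2$ with $\check{d}(\AAA_2)=k$ exists if and only if $k$ is a coboundary, i.e. if and only if $[k]=0$; this is condition $(ii)$. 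Reading the equivalence in both directions then gives the proposition: if an extension exists it supplies an $a_2$ solving (\ref{eqnarray.5..}) and an $\AAA_2$ trivializing $k$, forcing $(i)$ and $(ii)$; conversely, given $(i)$ and $(ii)$, choosing such $a_2$ and $\AAA_2$ and setting $A_{\alpha,2}$ to be the matrix of $\AAA_{\alpha,2}$ produces the desired $(\EEE_2,\nabla_2)$.

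The main obstacle will be the second-order reduction of (\ref{eqnarray.3..}) to (\ref{eqnarray.4..}): one must track the three cross-terms $g_{\beta \alpha,1}A_{\alpha,1}g_{\alpha \beta}$, $g_{\beta \alpha,1}A_{\alpha}g_{\alpha \beta,1}$, $g_{\beta \alpha}A_{\alpha,1}g_{\alpha \beta,1}$ and invoke (\ref{eqnarray.5..}) at precisely the right place so that the purely bundle-theoretic contributions cancel, leaving only the terms appearing in (\ref{equation 19}). A secondary subtlety, already reflected in the conditional phrasing, is that $k$ depends on the chosen $a_2$ through (\ref{equation 19}); hence $[k]$ is only defined once $\EEE_2$ (equivalently, a solution $a_2$ of (\ref{eqnarray.5..})) has been fixed, which is exactly why $(ii)$ is stated under the standing assumption that $(i)$ holds.
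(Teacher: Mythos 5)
Your proposal is correct and follows essentially the same route as the paper: the mod-$\epsilon^3$ condition decouples into the bundle part, settled by Proposition \ref{proposition 1}, and the connection part, where (\ref{eqnarray.3..}) is reduced to $\check{d}(\AAA_2)=k$ so that solvability is exactly $[k]=0$, with the preceding Lemma guaranteeing that $k$ is a cocycle. Your closing remark that $[k]$ is only defined after a solution $a_2$ of (\ref{eqnarray.5..}) has been fixed is exactly the right reading of the conditional phrasing of $(ii)$.
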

The expression $\AAA_{\alpha,1}a_{\alpha \beta,1}-a_{\alpha \beta,1}\AAA_{\beta,1}$ entering (\ref{equation 19}) is a component
$c^{1,1}$ of the $\check{C}$ech cocycle $(c^{1,1}, c^{2,0})\in\check{Z}^{2}(\mathfrak U, \CCC^{\fatdot})$ representing 
the Yoneda square $[a_1,\AAA_{1}]\circ[a_1,\AAA_{1}]$. The other component is $c^{2,0}_{\alpha \beta \gamma}=a_{\alpha \beta,1}
a_{\beta \gamma,1}+a_{\beta \gamma,1}a_{\gamma \alpha,1}+a_{\alpha \beta,1}a_{\gamma \alpha,1}.$
Hence we have:
\begin{proposition}
Under the assumptions of Prop. (\ref{proposition 1.2}), $(\EEE_1, \nabla_1)$ extends to $(\EEE_2,\nabla_2)$ over $V_2$
with fixed divisor of poles $D$ if and only if the Yoneda square $[a_1,\AAA_{1}]\circ[a_1,\AAA_{1}]$ vanishes in $\HH^{2}(X,\CCC^{\fatdot})$.
\end{proposition}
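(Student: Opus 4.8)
The plan is to pass from the two separate conditions $(i)$ and $(ii)$ of Proposition \ref{proposition 1.2} to the single vanishing statement by unwinding the definition of $\HH^2(X,\CCC^\fatdot)$ through the \v Cech double complex. Since $\CCC^\fatdot=[\CCC^0\to\CCC^1]$ is concentrated in degrees $0,1$, with $\CCC^0=\mathcal{E}nd(\EEE)$, $\CCC^1=\mathcal{E}nd(\EEE)\otimes\Omega^1(D)$ and $d_\CCC=\nabla_{\mathcal{E}nd(\EEE)}$, the degree-$2$ piece of the total complex is $L^2=\check{C}^2(\mathfrak U,\CCC^0)\oplus\check{C}^1(\mathfrak U,\CCC^1)$ (the summand $\check{C}^0(\mathfrak U,\CCC^2)$ being zero), so a hypercohomology $2$-cocycle is a pair $(c^{2,0},c^{1,1})$ with $c^{2,0}\in\check{C}^2(\mathfrak U,\mathcal{E}nd(\EEE))$ and $c^{1,1}\in\check{C}^1(\mathfrak U,\mathcal{E}nd(\EEE)\otimes\Omega^1(D))$, subject to $\check{d}c^{2,0}=0$ and the mixed relation $d_\CCC c^{2,0}=\pm\,\check{d}c^{1,1}$ coming from the $(-1)^p$ sign in $D$.

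First I would write down the representative of the Yoneda square singled out before the statement, namely the bundle part $c^{2,0}_{\alpha\beta\gamma}=a_{\alpha\beta,1}a_{\beta\gamma,1}+a_{\beta\gamma,1}a_{\gamma\alpha,1}+a_{\alpha\beta,1}a_{\gamma\alpha,1}$ and the connection part $c^{1,1}_{\alpha\beta}=\AAA_{\alpha,1}a_{\alpha\beta,1}-a_{\alpha\beta,1}\AAA_{\beta,1}$, and check that $(c^{2,0},c^{1,1})\in\check{Z}^2(\mathfrak U,\CCC^\fatdot)$. The relation $\check{d}c^{2,0}=0$ is the vector-bundle computation already used in Section \ref{Kvb} for Proposition \ref{proposition 1}, while the mixed cocycle relation is exactly the content of the Lemma preceding the statement: using $\check{d}a_2=-c^{2,0}$ from (\ref{eqnarray.5..}) together with $k=d_\CCC a_2+c^{1,1}$ from (\ref{equation 19}), one gets $\check{d}k=-d_\CCC c^{2,0}+\check{d}c^{1,1}$, so that the proven cocycle property $\check{d}k=0$ is precisely $d_\CCC c^{2,0}=\check{d}c^{1,1}$. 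This legitimizes writing $[a_1,\AAA_1]\circ[a_1,\AAA_1]=[(c^{2,0},c^{1,1})]\in\HH^2(X,\CCC^\fatdot)$.

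The core of the argument is then the observation that a hypercohomology $2$-coboundary is a pair $(b^{1,0},b^{0,1})\in\check{C}^1(\mathfrak U,\CCC^0)\oplus\check{C}^0(\mathfrak U,\CCC^1)$ with $c^{2,0}=\check{d}b^{1,0}$ and $c^{1,1}=\check{d}b^{0,1}-d_\CCC b^{1,0}$ (signs as dictated by $D$). I would read these two equations as the two stages of a zig-zag. The first says exactly that $c^{2,0}$ is $\check{d}$-exact, i.e. that the bundle Yoneda square $[a_1]\circ[a_1]$ vanishes in $H^2(X,\mathcal{E}nd(\EEE))$ --- this is condition $(i)$, and the choice of a primitive $b^{1,0}$ is, up to sign, the choice of a solution $a_2$ of (\ref{eqnarray.5..}). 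For that choice the residual $(1,1)$-term is, up to sign, precisely the cocycle $k$ of (\ref{equation 19}), so the second equation is solvable in $b^{0,1}$ if and only if $[k]=0$ in $H^1(X,\mathcal{E}nd(\EEE)\otimes\Omega^1(D))$ --- this is condition $(ii)$. Conversely, primitives furnished by $(i)$ and $(ii)$ assemble into a hypercohomology coboundary. Combined with Proposition \ref{proposition 1.2}, which already equates extendability of $(\EEE_1,\nabla_1)$ to $(\EEE_2,\nabla_2)$ with the conjunction of $(i)$ and $(ii)$, this yields the stated criterion.

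The part requiring the most care is the sign bookkeeping of the cross-differential $d_\CCC=\nabla_{\mathcal{E}nd(\EEE)}$: one must fix the representative of the Yoneda square (for instance the sign of the $(2,0)$-component, or equivalently replace $a_2$ by $-a_2$) so that the primitive $b^{1,0}$ solving $\check{d}b^{1,0}=c^{2,0}$ coincides with the $a_2$ appearing in (\ref{equation 19}) and the residual $(1,1)$-cocycle comes out equal to $k$ rather than to $k$ shifted by a multiple of $d_\CCC a_2$. The accompanying conceptual point, which is in fact the reason the obstruction is best placed in $\HH^2$, is that the secondary obstruction $[k]$ depends on the chosen solution $a_2$: two solutions differ by a $\check{d}$-cocycle $z$, changing $k$ by $d_\CCC z$, whose class is the image of $[z]$ under the induced map $d_\CCC\colon H^1(X,\mathcal{E}nd(\EEE))\to H^1(X,\mathcal{E}nd(\EEE)\otimes\Omega^1(D))$ and need not vanish. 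This ambiguity is exactly the coboundary freedom in $b^{1,0}$, so the hypercohomology class $[(c^{2,0},c^{1,1})]$ packages $(i)$ and $(ii)$ into one choice-independent invariant, and its vanishing is equivalent to the existence of some admissible $a_2$ with $[k]=0$, hence to the extendability of $(\EEE_1,\nabla_1)$.
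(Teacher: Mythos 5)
Your proof is correct and follows essentially the same route as the paper: the paper's own argument consists precisely of identifying $(c^{2,0},c^{1,1})$ with $c^{2,0}_{\alpha\beta\gamma}=a_{\alpha\beta,1}a_{\beta\gamma,1}+a_{\beta\gamma,1}a_{\gamma\alpha,1}+a_{\alpha\beta,1}a_{\gamma\alpha,1}$ and $c^{1,1}_{\alpha\beta}=\AAA_{\alpha,1}a_{\alpha\beta,1}-a_{\alpha\beta,1}\AAA_{\beta,1}$ as a representative of the Yoneda square in $\check{Z}^{2}(\mathfrak U,\CCC^{\fatdot})$ and then invoking Proposition \ref{proposition 1.2}, leaving the coboundary zig-zag implicit. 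Your explicit unwinding of the hypercohomology coboundary condition into conditions $(i)$ and $(ii)$, together with the remark on the dependence of $[k]$ on the choice of $a_2$, supplies exactly the details the paper omits.
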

\subsection{Infinitesimal deformations of the Atiyah class}
We fix a vector bundle $\EEE$ on $X$ given by a cocycle $g_{\alpha \beta}$. Recall that we defined the Atiyah class of $\EEE$
as the cohomology class of the cocycle $\GGG_{\alpha \beta}=\ud g_{\alpha \beta}g^{-1}_{\alpha \beta}$ (here $\GGG_{\alpha \beta}$
is considered as a section of $\mathcal{E}nd(\EEE)\otimes\Omega^{1}(D)$ given by the matrix $\ud g_{\alpha \beta}g^{-1}_{\alpha \beta}$
in the basis $e_{\alpha}$).  

If $\EEE_i$ is an extension of $\EEE$ (as a vector bundle) to $X\times V_i$, where $V_i=\Spec k[\epsilon]/(\epsilon^{i+1})$,
then we can define the Atiyah class $\At(\EEE_i)\in H^{1}(X,\mathcal{E}nd(\EEE_i)\otimes\Omega^1)$ by the cocycle $\GGG_{i,\alpha \beta}=
\ud g_{i,\alpha \beta}g^{-1}_{i,\alpha \beta}$, where $(g_{i,\alpha \beta})$ is a cocycle defining $\EEE_i$, $g_{i,\alpha \beta}\in\Gamma(U_{\alpha \beta}, M_r(\OOO_X)\otimes k[\epsilon]/(\epsilon^{i+1}))$.
The following assertion is obvious.
\begin{lemma}
Assume that $\EEE$ admits a connection $\nabla$ with fixed divisor of poles $D$. Then $\nabla$ extends to a connection $\nabla_i$ on $\EEE_i$ with
fixed divisor of poles $D$ if and only if the image $\At^D(\EEE_i)$ of $\At(\EEE_i)$ in $H^{1}(X,\mathcal{E}nd(\EEE_i)\otimes\Omega^1 (D))$ is zero.
\end{lemma}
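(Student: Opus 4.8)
The plan is to apply the Atiyah-class criterion proved above (the Proposition asserting that $\EEE$ carries a connection with poles $D$ if and only if $\At^D(\EEE)=0$) to the bundle $\EEE_i$ over the base $X\times V_i$, and then to normalise the resulting connection so that its reduction on the central fibre is the prescribed $\nabla$. First I would transcribe that Proposition at the cochain level for $\EEE_i$: a connection on $\EEE_i$ with poles $D$ is a $0$-cochain $\AAA_i=(A_{i,\alpha})$ of matrices in $\Gamma(U_\alpha,M_r(\OOO_X)\otimes\Omega^1(D)\otimes k[\epsilon]/(\epsilon^{i+1}))$ obeying the transition rule (\ref{equation 6}) for the cocycle $(g_{i,\alpha\beta})$, and, exactly as in the identity $\GGG=\check{d}\AAA$ used before, this rule is equivalent to $\GGG_i=\check{d}\AAA_i$, where $\GGG_{i,\alpha\beta}=\ud g_{i,\alpha\beta}\,g_{i,\alpha\beta}^{-1}$ represents $\At^D(\EEE_i)$. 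Hence $\EEE_i$ admits a connection with poles $D$ exactly when $\GGG_i$ is a $\check{d}$-coboundary, that is, exactly when $\At^D(\EEE_i)=0$.

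The direct implication of the lemma is then immediate: an extension $\nabla_i$ of $\nabla$ is in particular a connection on $\EEE_i$ with poles $D$, so $\At^D(\EEE_i)=0$. For the converse, suppose $\At^D(\EEE_i)=0$ and pick connection matrices $\AAA_i'=(A_{i,\alpha}')$ for $\EEE_i$. Reducing modulo $\epsilon$, both the reduction $\overline{\AAA_i'}$ and the matrices $\AAA=(A_\alpha)$ of $\nabla$ solve $\check{d}(\cdot)=\GGG$, so their difference is a $0$-cocycle, i.e. a global section $\phi\in H^0(X,\mathcal{E}nd(\EEE)\otimes\Omega^1(D))$. If $\phi$ lifts to $\Phi\in H^0(X\times V_i,\mathcal{E}nd(\EEE_i)\otimes\Omega^1(D))$, then $\AAA_i:=\AAA_i'+\Phi$ still satisfies $\check{d}\AAA_i=\GGG_i$, because adding a global endomorphism-valued form leaves every transition rule unchanged, and now reduces to $\overline{A'_{\alpha}}+\phi=A_\alpha$; the corresponding $\nabla_i$ is the sought extension of $\nabla$.

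The one step that is not purely formal, and which I expect to be the crux, is the liftability of $\phi$ from the central fibre to $X\times V_i$; everything else is a transcription of the earlier criterion. I would filter $\mathcal{E}nd(\EEE_i)\otimes\Omega^1(D)$ by powers of $\epsilon$. Flatness of $\EEE_i$ makes each graded piece isomorphic to $\mathcal{E}nd(\EEE)\otimes\Omega^1(D)$, since the top $\epsilon$-coefficient transforms by ordinary conjugation by $g_{\alpha\beta}$ while the higher terms of $g_{i,\alpha\beta}$ drop out; consequently the reduction map sits in a long exact sequence whose obstruction to lifting lives in $H^1(X,\mathcal{E}nd(\EEE)\otimes\Omega^1(D))$. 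The remaining work is to check that these connecting homomorphisms annihilate the classes through which $\phi$ must be lifted, after which the normalisation above goes through and the lemma follows. I would emphasise that it is precisely the passage from \emph{admitting} a connection to \emph{extending the given} $\nabla$ that forces this lifting step; the bare equivalence with the vanishing of $\At^D(\EEE_i)$ is nothing more than the Atiyah criterion applied to $\EEE_i$.
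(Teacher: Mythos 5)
Your opening two paragraphs, up to and including the observation that $\EEE_i$ carries a connection with poles $D$ exactly when the cocycle $\GGG_i=(\ud g_{i,\alpha\beta}\,g_{i,\alpha\beta}^{-1})$ is a $\check d$-coboundary, is precisely the content the paper intends: it states the lemma with the remark ``the following assertion is obvious'' and supplies no proof, because it is nothing but the earlier Proposition ($\EEE$ admits a connection with poles $D$ iff $\At^D(\EEE)=0$) transcribed verbatim for the cocycle $(g_{i,\alpha\beta})$ over $k[\epsilon]/(\epsilon^{i+1})$. Had you stopped there, reading ``$\nabla$ extends to a connection $\nabla_i$ on $\EEE_i$'' loosely as ``$\EEE_i$ admits a connection with poles $D$,'' you would have a complete proof matching the paper's intent.

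The difficulty is with the normalisation step you add in order to make the restriction of $\nabla_i$ equal the \emph{given} $\nabla$. You correctly isolate the crux: one must lift $\phi=\overline{\AAA_i'}-\AAA\in H^0(X,\mathcal{E}nd(\EEE)\otimes\Omega^1(D))$ along the restriction map $\res_{i0}:H^0(\mathcal{E}nd(\EEE_i)\otimes\Omega^1(D))\rar H^0(\mathcal{E}nd(\EEE)\otimes\Omega^1(D))$. But the hope that ``the connecting homomorphisms annihilate the classes through which $\phi$ must be lifted'' cannot be realised in general: this map need not be surjective, and the paper itself constructs a counterexample immediately after the Corollary following this lemma. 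There, $X$ is an elliptic curve, $D=0$, $\EEE=\OOO_X^{\oplus 2}$, and $\EEE_1$ is the nontrivial extension with transition matrix $\bigl(\begin{smallmatrix}1&\epsilon f\\0&1\end{smallmatrix}\bigr)$; one has $\At(\EEE_1)=0$, yet $\res_{10}$ fails to hit the global endomorphism $\bigl(\begin{smallmatrix}0&0\\0&1\end{smallmatrix}\bigr)$, so there are regular connections on $\EEE$ that do not extend to $\EEE_1$. Hence the strong form of the ``if'' direction that your third paragraph sets out to prove is actually false, and no refinement of the $\epsilon$-adic filtration argument can rescue it; the correct quantitative statement is the Corollary's part (ii): a given connection on $\EEE_i$ extends to $\EEE_j$ iff $\res_{ji}$ is surjective. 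You should either weaken your reading of the lemma to the existence statement (for which your first two paragraphs, minus the normalisation, already suffice) or flag that the literal wording overstates what is true.
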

\begin{corollary}
Let $j>0$, and assume $\EEE$ extends to a vector bundle $\EEE_j$ over $X\times V_j$.
For any $i\geq 0, i\leq j$, denote by $\EEE_i$ the restriction of $\EEE_j$ to $X\times V_i$.
The following assertions hold:\\
$(i)$ if $\nabla_j$ is a connection with fixed divisor $D$ of poles on $\EEE_j$, then $\nabla_i=\nabla_{j}\vert_{\EEE_i}$ is
such a connection on $\EEE_i$. Thus $\At^D (\EEE_j)=0\Rightarrow \At^D(\EEE_i)=0 (i\leq j$).\\
$(ii)$ Let $\At^D (\EEE_j)=0$. Introduce the natural restriction map 
$$\res_{ji}:H^{0}(\mathcal{E}nd(\EEE_j)\otimes\Omega^1 (D))\rar  H^{0}(\mathcal{E}nd(\EEE_i)\otimes\Omega^1 (D))$$
$$\phi\mapsto \phi\otimes k[\epsilon]/(\epsilon^{i+1})$$
Then any connection with fixed divisor of poles $D$ on $\EEE_i$ extends to such a connection on $\EEE_j$ if and
only if $\res_{ji}$ is surjective. 
\end{corollary}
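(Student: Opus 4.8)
The plan is to handle the two assertions separately: (i) follows directly from the local transition rule for connection matrices, and (ii) from the fact that connections with a fixed pole divisor form a torsor. For (i), I would show that restriction sends connections to connections. Fix a trivialization and let $A_{\alpha}^{(j)}$ be the connection matrices of $\nabla_j$ over the Artinian ring $R_j=k[\ep]/(\ep^{j+1})$, so that they satisfy the transition rule (\ref{equation 6}) over $R_j$. Reduction modulo $\ep^{i+1}$ is a ring homomorphism $R_j\rar R_i=k[\ep]/(\ep^{i+1})$ that commutes with $\ud$ and with inversion of invertible matrices, so the reduced matrices $A_{\alpha}^{(i)}=A_{\alpha}^{(j)}\bmod\ep^{i+1}$ again satisfy (\ref{equation 6}) and the Leibniz rule over $R_i$; hence $\nabla_i=\nabla_j\vert_{\EEE_i}$ is a connection on $\EEE_i$ with the same pole divisor $D$. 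The implication $\At^D(\EEE_j)=0\Rightarrow\At^D(\EEE_i)=0$ then follows from the preceding Lemma: vanishing of $\At^D(\EEE_j)$ produces a $\nabla_j$ extending $\nabla$, its restriction $\nabla_i$ extends $\nabla$ on $\EEE_i$, and so $\At^D(\EEE_i)=0$ again by the Lemma. Equivalently, the reduction carries the cocycle $\GGG_{j,\alpha\beta}$ to $\GGG_{i,\alpha\beta}$, so the induced map on $H^1$ sends $\At^D(\EEE_j)$ to $\At^D(\EEE_i)$.

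For (ii), the key structural fact I would record first is that, for a fixed bundle, the set $\Conn(\EEE_j)$ of connections with pole divisor $D$ is, when nonempty, an affine space over $H^0(\mathcal{E}nd(\EEE_j)\otimes\Omega^1(D))$: the difference of two such connections annihilates the Leibniz term and is therefore $\OOO$-linear, i.e.\ a global section of $\mathcal{E}nd(\EEE_j)\otimes\Omega^1(D)$, while adding such a section to a connection yields a connection. The same holds over $R_i$. Reading this off the local matrices as in (i), the restriction map $r\colon\Conn(\EEE_j)\rar\Conn(\EEE_i)$ is affine over the linear map $\res_{ji}$, that is $r(\nabla+t)=r(\nabla)+\res_{ji}(t)$ for every $t\in H^0(\mathcal{E}nd(\EEE_j)\otimes\Omega^1(D))$.

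With this in hand the corollary is immediate. Since $\At^D(\EEE_j)=0$, the space $\Conn(\EEE_j)$ is nonempty; fix $\nabla_j^0\in\Conn(\EEE_j)$ and set $\nabla_i^0=r(\nabla_j^0)\in\Conn(\EEE_i)$. Because $r$ is affine over $\res_{ji}$, its image is the coset $\nabla_i^0+\im(\res_{ji})$, which exhausts the whole torsor $\Conn(\EEE_i)=\nabla_i^0+H^0(\mathcal{E}nd(\EEE_i)\otimes\Omega^1(D))$ exactly when $\res_{ji}$ is surjective. Concretely: if every connection on $\EEE_i$ extends, then for arbitrary $t$ the connection $\nabla_i^0+t$ lifts to some $\nabla_j^0+s$, forcing $\res_{ji}(s)=t$ and hence surjectivity; conversely, if $\res_{ji}$ is surjective, any $\nabla_i=\nabla_i^0+t$ is hit by $\nabla_j^0+s$ with $\res_{ji}(s)=t$.

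I expect the only point requiring real care to be the torsor and base-change bookkeeping over the non-reduced Artinian rings $R_j$: one must check that $\mathcal{E}nd(\EEE_i)\otimes\Omega^1(D)$ is genuinely the $R_i$-reduction of $\mathcal{E}nd(\EEE_j)\otimes\Omega^1(D)$ (using flatness of $\EEE_j$ over $R_j$) and that $\res_{ji}$ is precisely the induced map on global sections, so that ``affine over $\res_{ji}$'' holds literally. The geometric content—that a map of nonempty affine spaces which is affine over a linear map is surjective iff that linear map is surjective—is then routine.
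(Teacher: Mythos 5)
Your proposal is correct and follows essentially the same route as the paper: the paper's own (very terse) proof of $(ii)$ rests on exactly the observation you make, namely that the difference of two connections with fixed pole divisor $D$ lies in $H^{0}(\mathcal{E}nd(\EEE_j)\otimes\Omega^1(D))$ and that restriction of connections is compatible with $\res_{ji}$ on these differences, so that the extension question reduces to surjectivity of $\res_{ji}$. Your treatment of $(i)$ via reduction of the transition rule modulo $\epsilon^{i+1}$, and your explicit torsor bookkeeping, simply spell out what the paper leaves implicit.
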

\begin{proof}
$(i)$ is obvious. To prove $(ii)$, we use the following observation:
for two connections $\nabla_j, \nabla'_j$ on $\EEE_j$ with fixed divisor $D$ of poles, the difference
$\nabla_j-\nabla'_j$ is an element of $H^{0}(\mathcal{E}nd(\EEE_j)\otimes\Omega^1 (D))$ and $(\nabla_j-\nabla'_j)\vert_{\EEE_i}=
\res_{ji}(\nabla_j-\nabla'_j)\in H^{0}(\mathcal{E}nd(\EEE_i)\otimes\Omega^1 (D))$.
\end{proof}

In this Corollary, it is possible that both $\EEE_i, \EEE_j$ admit 
connections with fixed divisor of poles $D$, but not every connection with the same $D$
on $\EEE_i$ extends to such a connection on $\EEE_j$.
To produce an example, set $D=0, i=0, j=1, X$ an elliptic curve, $\EEE=\OOO_X^{\oplus 2}$.
Define $\EEE_1$ as a nontrivial extension of vector bundles
\begin{eqnarray}\label{eqnarray a}%
 0\ra\OOO_{X\times V_1}\xymatrix@1{\ar[r]^{\mu}&}\EEE_1\xymatrix@1{\ar[r]^{\nu}&}\OOO_{X\times V_1}\ra 0
\end{eqnarray}
Such extensions are classified by $\Ext^{1}(\OOO_{X\times V_1},\OOO_{X\times V_1})=H^{1}(\OOO_{X\times V_1})\simeq k[\epsilon]/(\epsilon^2)$,
and we choose an extension class in the form $\epsilon[f]$, so that the extension is trivial modulo $\epsilon^2$.
We can describe $[f]$ and the associated extension explicitly as follows.
Let $\mathfrak U=\{U_{+-}\}$ be an open covering of X, and $f\in\Gamma(U_{\pm},\OOO_X)$ 
a function whose cohomology class $[f]$ generates $H^{1}(X,\OOO_X)$.
Let $e_{\pm}=(e_{\pm 1},e_{\pm 2})$ be a basis of $\EEE\vert_{U_{+-}}$, and define the transition matrix
over $U_{+-}$ by 
\begin{eqnarray}
\left( \begin{array}{cc} 1 & \epsilon f \\ 0 & 1
\end{array}\right).
\end{eqnarray}
Define the maps $\mu, \nu$ in (\ref{eqnarray a}) by
$\mu:1\mapsto e_{\pm 1}$, $\nu:(e_{\pm 1},e_{\pm 2})\mapsto (0,1)$.
To be more explicit, we will give $X$ by the Legendre equation $$y^2=x(x-1)(x-t)\ (t\in k\setminus\{0,1\}),$$
and define an open covering $\mathfrak U$ of $X$ by $U_{+}=X\setminus\{\infty\}, U_{-}=X\setminus\{0\}$.  
Then we can choose $f=\frac{y}{x}$ as a function having two simple poles at $0$ and $\infty$ and no other singularities.
The Residue Theorem implies that it is impossible to represent $f$ as the difference of two functions, 
one regular on $U_+$ and the other on $U_-$, so the cohomology class of $f$ considered as a $\check{C}$ech cocycle
of the covering $\mathfrak U$ with coefficients in $\OOO_X$ is nonzero.
We now verify that $\At(\EEE_1)=0$. It is represented by the cocycle
\begin{eqnarray}
\ud g_{+-}g^{-1}_{+-}=\left( \begin{array}{cc} 0 & \epsilon df \\ 0 & 0
\end{array}\right),
\end{eqnarray}
and $$df=\ud(\frac{y}{x})=\frac{\ud y}{x}-y\frac{\ud x}{x^2}=\omega_+ -\omega_-,$$
where $$\omega_+=2\frac{\ud y}{x}-y\frac{\ud x}{x^2},\ \omega_-=\frac{\ud y}{x},$$ $\omega_+$ (resp. $w_-$) being regular on $U_+$ (resp. $U_{-}$).
Hence,
\begin{eqnarray}
\ud g_{+-}g^{-1}_{+-}=\left( \begin{array}{cc} 0 & \epsilon\omega_+\\ 0 & 0 \end{array}\right)
-\left( \begin{array}{cc} 0 & \epsilon\omega_-\\ 0 & 0 \end{array}\right)
\end{eqnarray}
is a \v Cech coboundary, and $\At(\EEE_1)=0$.
Thus $\EEE_1$ has a regular connection.\\
Now, we will show that the map $\res_{10}$ defined in the last corollary is not surjective, so not
every regular connection on $\EEE$ extends to a regular connection on $\EEE_1$.
We remark that in our case $\Omega^{1}_X$ is trivial, $D=0$, so $\res_{10}$  is just the restriction map
$\res_{10}:H^{0}(\mathcal{E}nd(\EEE_1))\rar H^{0}(\mathcal{E}nd(\EEE_0)).$
Consider $\EEE_1$ as an extension of another kind:
$$0\rar\epsilon\EEE\rar\EEE_1\rar\EEE\rar 0,$$
where $\epsilon\EEE\simeq\OOO_X^{\oplus{2}}$ and $\EEE\simeq\EEE_1/\epsilon\EEE\simeq\OOO_X^{\oplus{2}}$.
Apply to it $\mathcal{H}om(\EEE_1, .)$(the Hom-sheaf as $\OOO_{X\times V_1}$-modules):
$$0\rar \mathcal{H}om(\EEE_1,\EEE)\rar\mathcal{E}nd{(\EEE_1)}\rar\mathcal{H}om(\EEE_1, \EEE)\rar 0.$$
As $\EEE\simeq\OOO_X^{\oplus 2}$, the first and the third terms of the last triple
are described as follows:
$$\mathcal{H}om(\EEE_1,\EEE)\simeq\mathcal{E}nd{(\EEE)}=M_2(\OOO_X).$$
Take an element in $H^0(\mathcal{H}om(\EEE_1, \EEE))\simeq M_2(k)$
given by the matrix \begin{eqnarray}
\left( \begin{array}{cc} 0 & 0 \\ 0 & 1 
\end{array}\right),
\end{eqnarray}
(as above, $\EEE_1, \EEE$ are trivialized by the bases $e_{\pm}=(e_{\pm 1},e_{\pm 2})).$
We will see that it is not in the image of the restriction map $\res_{1,0}$.\\
Indeed, assume there is a lift of \begin{eqnarray}\left( \begin{array}{cc} 0 & 0 \\ 0 & 1 
\end{array}\right)
\end{eqnarray} to $H^0(\mathcal{E}nd(\EEE_1))$. Then it is given in the basis $e_{+}$ by a matrix of the form 
\begin{eqnarray}
A_+=\left( \begin{array}{cc} 0 & 0 \\ 0 & 1 
\end{array}\right)+\epsilon B,
\end{eqnarray}
$B\in M_2(k[U_+])$. Transforming it to the basis $e_-$, we obtain the matrix
\begin{eqnarray}
A_-=\left( \begin{array}{cc} 0 & -\epsilon f \\ 0 & 1 
\end{array}\right)+\epsilon B,
\end{eqnarray}
which has to be regular in $U_-$. Thus $\epsilon f=\epsilon b_{12}-a_{-12}$, where $b_{12}$ is
regular in $U_+$ and $a_{-12}$ is regular in $U_-$.
This contradicts the fact that $f$ is not a $\check{C}$ech coboundary in $\check{C}(\mathfrak U, \OOO_X)$,
and this ends the proof.
\subsection{Kuranishi space for deformations of connections}
\begin{theorem}\label{theorem 1.3}
Let $X$ be a complete scheme of finite type over $k$ or a complex space (in which case $k=\CC$), $\CCC^{\fatdot}$
the $2$-term complex of sheaves on $X$ defined by (\ref{equation c}),
$W=\HH^{1}(X,\CCC^{\fatdot})$,$(\delta_1\dots,\delta_N)$ a basis of $W$ and $(t_1,\dots,t_N)$ the dual coordinates on $W$.
Let $W_k$ denote the $k$-th infinitesimal neighborhood of $0$ in $W$, and $(\EEE_1, \nabla_1)$ the universal first order
deformation over $X\times W_1$ of a connection $(\EEE,\nabla)$ on $X$ with fixed divisor of poles $D$. Then there exists a formal power series
$$f(t_1,\dots,t_N)=\sum_{k=2}^{\infty} f_{k}(t_1\dots,t_N)\in \HH^{2}(X, \CCC^{\fatdot})[[t_1,\dots,t_N]],$$
where $f_k$ is  homogeneous of degree $k$ ($k\geq 2$), with the following property.
Let $I$ be the ideal of $k[[t_1,\dots,t_N]]$ generated by the image of the map $f^*:\HH^{2}(X,\CCC^{\fatdot})^{*}\rar k[[t_1,\dots,t_N]]$, adjoint to $f$.
Then for any $k\geq 2$, the pair $(\EEE_1,\nabla_1)$ extends to a connection $(\EEE_k,\nabla_k)$ on $X\times V_k$, where
$V_k$ is the closed subscheme of $W_k$ defined by the ideal $I\otimes k[[t_1,\dots,t_N]]/(t_1,\dots,t_N)^{k+1}.$
\end{theorem}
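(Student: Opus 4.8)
The plan is to run the inductive scheme from the proof of Theorem~\ref{theorem 1} essentially verbatim, replacing the single \v Cech complex of $\mathcal{E}nd(\EEE)$ by the total complex $(L^\fatdot, D)$ of the double complex (\ref{equation 18}) attached to $\CCC^\fatdot$. The deformation datum at order $k$ is now a \emph{pair}: the deformed transition matrices $G^{(k)}_{\alpha\beta} = \sum_{i=0}^{k} G_{\alpha\beta,i}$ and the deformed connection matrices $A^{(k)}_{\alpha} = \sum_{i=0}^{k} A_{\alpha,i}$. Under the dictionary ``matrix in the basis $e_\alpha$'', the corrections $a_{\alpha\beta,i} = G_{\alpha\beta,i} g^{-1}_{\alpha\beta}$ and $\AAA_{\alpha,i} = A_{\alpha,i}$ assemble, for each $i$, into a homogeneous degree-$i$ cochain $(a_i, \AAA_i)$ in the total-degree-$1$ part $\check{C}^1(\mathfrak U, \CCC^\fatdot)$, which is exactly the object classifying first order deformations in Theorem~\ref{theorem 1.1}. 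Accordingly, the two relations to be propagated through the induction are the multiplicative cocycle relation for $(G^{(k)}_{\alpha\beta})$ and the connection transition rule (\ref{equation 6}).

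First I would fix cross-sections $\sigma_i : \HH^i(X, \CCC^\fatdot) \rar \check{Z}^i(\mathfrak U, \CCC^\fatdot)$ and a section $\tau$ of the projection onto $\check{B}^2(\mathfrak U, \CCC^\fatdot)$, as in the bundle case. The base case $k \le 1$ is supplied by Theorem~\ref{theorem 1.1}: choose cocycles $(a^{(i)}, \AAA^{(i)}_1) = \sigma_1(\delta_i)$ dual to $t_1,\dots,t_N$ and set $G_{\alpha\beta,1} = \sum_i a^{(i)}_{\alpha\beta} g_{\alpha\beta} t_i$, $A_{\alpha,1} = \sum_i \AAA^{(i)}_{\alpha,1} t_i$, with $f_0 = f_1 = 0$. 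The first nonzero term $f_2$ is then the Yoneda square $[a_1, \AAA_1] \circ [a_1, \AAA_1] \in \HH^2(X, \CCC^\fatdot)$, whose two components $c^{2,0}, c^{1,1}$ are precisely those computed just before the subsection on the Atiyah class.

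The inductive step proceeds as in Theorem~\ref{theorem 1}. Suppose the data of degree $<k$ are already fixed. The degree-$k$ discrepancies then form a pair $(F^{2,0}_{\alpha\beta\gamma,k}, F^{1,1}_{\alpha\beta,k})$: here $F^{2,0}_k$ is the degree-$k$ component of the multiplicative product $G^{(k-1)}_{\alpha\beta} G^{(k-1)}_{\beta\gamma} G^{(k-1)}_{\gamma\alpha}$, and $F^{1,1}_k$ is the degree-$k$ component of the failure of $(A^{(k-1)}_\alpha)$ to satisfy (\ref{equation 6}), namely the higher order analogue of the cochain $k_{\alpha\beta}$ of (\ref{equation 19}). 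By the induction hypothesis this pair is a $2$-cocycle modulo $I^{(k)}$, and I would let $f_k$ be a lift of its class in $\HH^2(X, \CCC^\fatdot)$. Once the image of $f^{(k)*}$ has been absorbed into $I^{(k+1)}$, the pair becomes a coboundary modulo $I^{(k+1)}$, so applying $(\tau\otimes\id)$ to its reduction yields degree-$k$ corrections $(a_k, \AAA_k)$; I set $G_{\alpha\beta,k} = a_{\alpha\beta,k} g_{\alpha\beta}$ and $A_{\alpha,k} = \AAA_{\alpha,k}$, which cancel both discrepancies modulo $I^{(k+1)}$. With $G^{(k)}$ and $A^{(k)}$ so defined, the next pair $(F^{2,0}_{\alpha\beta\gamma,k+1}, F^{1,1}_{\alpha\beta,k+1})$ is the degree-$(k+1)$ homogeneous component of the same two expressions. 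The passage to the inverse limit and the identification of the subscheme $V_k \subset W_k$ cut out by $I$ are then formally identical to the bundle case.

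The hard part will be the analogue of Lemma~\ref{lemma 1}: verifying that the freshly defined pair $(F^{2,0}_{k+1}, F^{1,1}_{k+1})$ is a genuine $2$-cocycle modulo $I^{(k+1)}$ in the total complex, which is exactly what the induction consumes at the next stage. This amounts to two identities. The component in $\check{C}^3(\mathfrak U, \CCC^0)$ is $\check{d} F^{2,0}_{k+1} \equiv 0$, which is Lemma~\ref{lemma 1} and needs no change. The genuinely new one is the mixed identity in $\check{C}^2(\mathfrak U, \CCC^1)$, namely $\check{d} F^{1,1}_{k+1} + d_{\CCC} F^{2,0}_{k+1} \equiv 0 \bmod I^{(k+1)}$ with $d_{\CCC} = \nabla_{\mathcal{E}nd(\EEE)}$. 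I would derive it by applying $\nabla_{\mathcal{E}nd(\EEE)}$ to the telescoped product identity used in the proof of Lemma~\ref{lemma 1} and expanding through the Leibniz rule, so that the \v Cech differential of the connection discrepancy is matched term by term against $\nabla_{\mathcal{E}nd(\EEE)}$ of the transition discrepancy; the order-$1$ relation (\ref{equation 9}), which already equates $\check{d}\AAA_1$ with $\nabla_{\mathcal{E}nd(\EEE)}(a)$, is the prototype of this matching, and the skew-symmetry of $F^{1,1}_{k+1}$ falls out of the degenerate case $\delta = \gamma$, exactly as at the end of the proof of Lemma~\ref{lemma 1}.
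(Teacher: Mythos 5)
Your plan is correct, and its engine is the same as the paper's: an inductive \v Cech construction of the pair $(G^{(k)}_{\alpha\beta},A^{(k)}_{\alpha})$, the order-$(k+1)$ obstruction read off as a $2$-cocycle of the total complex of $\CCC^{\fatdot}$ whose leading term is the Yoneda square, and a two-part cocycle verification (Lemma~\ref{lemma 1} for the component in $\check{C}^{3}(\mathfrak U,\CCC^{0})$, a Leibniz-rule expansion of the telescoped product identity for the mixed component in $\check{C}^{2}(\mathfrak U,\CCC^{1})$ --- the latter is exactly the computation (\ref{gather.7..})). Where you genuinely diverge is in the bookkeeping. The paper does not work uniformly in the total complex from the outset: it first exploits the degeneration of the spectral sequence $E_{1}^{p,q}=H^{q}(X,\CCC^{p})\Rightarrow\HH^{p+q}(X,\CCC^{\fatdot})$ to split $W$ by the exact sequence $0\rar W'\rar W\rar W''\rar 0$, with $W''\subset H^{1}(X,\mathcal{E}nd(\EEE))$ the directions that actually move the underlying bundle and $W'$ a quotient of $H^{0}(X,\mathcal{E}nd(\EEE)\otimes\Omega^{1}(D))$, and chooses coordinates adapted to this splitting. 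The transition data $G_{\alpha\beta,k}$, $F_{\alpha\beta\gamma,k}$ and a bundle obstruction $\bar f_{k}$ valued in $H^{2}(X,\mathcal{E}nd(\EEE))$ are then built in the $W''$-coordinates only, by quoting the proof of Theorem~\ref{theorem 1} verbatim, while the connection data $A_{\alpha,k}$, $K_{\alpha\beta,k}$ and a second obstruction $\kappa_{k}$ valued in $H^{1}(X,\mathcal{E}nd(\EEE)\otimes\Omega^{1}(D))$ depend on all coordinates; two ideals $\bar I^{(k)}$ and $J^{(k)}$ are carried separately and the class $f_{k}\in\HH^{2}(X,\CCC^{\fatdot})$ is reassembled from $(\bar f_{k},\kappa_{k})$ only at the end. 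Your single-complex version buys uniformity --- one section $\tau$, one ideal, one cocycle condition --- at the harmless cost of letting the transition functions be deformed by \v Cech-trivial cocycles in the $W'$-directions; a suitable normalization of $\sigma_{1}$ (subtracting $D$ of a $0$-cochain so that the $\mathcal{E}nd(\EEE)$-component of each $W'$-representative vanishes) recovers the paper's normal form exactly. The paper's splitting buys a more explicit universal family (the bundle visibly deforms only along $W''$) and direct reuse of Theorem~\ref{theorem 1}, but it leans on the two-term shape of $\CCC^{\fatdot}$ and must be abandoned for the integrable case, where the spectral sequence no longer sits on two columns; there your uniform formulation is the one that carries over unchanged.
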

\begin{proof}
We will start by fixing a particular choice of coordinates $(t_1,\dots,t_N)$, coming from the spectral sequence 
$E_1^{p,q}=H^{q}(\CCC^p)\Rightarrow\HH^{p+q}(\CCC^{\bullet}).$ The latter is supported on two vertical strings
$p=0$ and $p=1$ (see Fig. \ref{figu}).
\begin{figure}[h]
\begin{picture}(0,250)(50,95)
\includegraphics{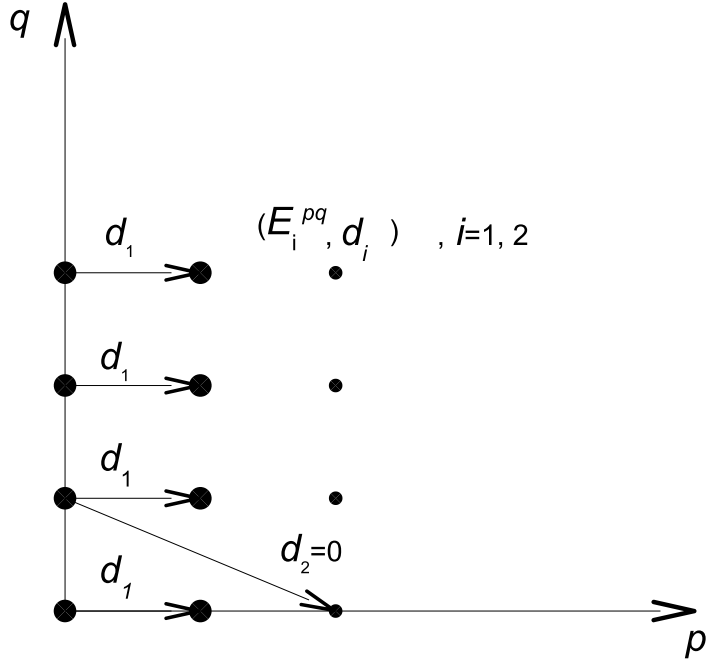}
\end{picture}
\caption{The spectral sequence is supported on $2$ vertical strings $p=0, p=1$.}
\label{figu}
\end{figure}

Thus the spectral sequence degenerates in the second term $E_2$, and we have the long
exact sequence
\arraycolsep=0.ex
\begin{eqnarray*}\label{eqnarray}
0&\lra&\HH^0(X,\CCC^{\fatdot})\lra H^{0}(X,\mathcal{E}nd(\EEE))\xymatrix@1{\ar[r]^{d_1}&}H^{0}(X,\mathcal{E}nd(\EEE)
\otimes\Omega^{1}_X (D))\\
&\lra&\HH^{1}(X,\CCC^{\fatdot})\lra H^{1}(X,\mathcal{E}nd(\EEE)) \xymatrix@1{\ar[r]^{d_1}&}H^{1}(X,\mathcal{E}nd(\EEE)\otimes\Omega^{1}_X (D))\\
&\lra&\HH^{2}(X,\CCC^{\fatdot})\lra H^{2}(X,\mathcal{E}nd(\EEE))\xymatrix@1{\ar[r]^{d_1}&}H^{2}(X,\mathcal{E}nd(\EEE)\otimes\Omega^{1}_X (D))\ra\dots,
\nonumber\end{eqnarray*}

We deduce the exact triple
$$0\rar W'\rar W\rar W''\rar 0,$$
with $$W'=\frac{H^{0}(X, \End(\EEE)\otimes\Omega^{1}_X (D))}{\im d_1}, W=\HH^{1}(X,\CCC^{\fatdot}),$$ $$W''=\ker(H^{1}(X,\End(\EEE))
\rar H^{1}(X, \End(\EEE)\otimes\Omega^{1}_X (D)).$$
Let $N'=\dim W'$, $N''=\dim W''$; choose $t_1,\dots,t_N$ in such a way that $s_1=t_{N'+1},\dots,s_{N''}=t_{N'+N''} (N=N'+N''),$
are coordinates on $W''$ and $t_1,\dots,t_{N'}$ restrict to $W'$ as coordinates on $W'$.
We will construct by induction on $k\geq 0$ the homogeneous forms
\begin{eqnarray}\label{eqnarray.6..}%
G_{\alpha \beta, k}(s_1,\dots,s_{N''})\in\Gamma(U_{\alpha \beta},\mathcal{E}nd(\EEE))\otimes k[s_1,\dots,s_{N''}], & \\ \nonumber
F_{\alpha \beta \gamma, k}(s_1,\dots,s_{N''})\in\Gamma(U_{\alpha \beta \gamma},\mathcal{E}nd(\EEE))\otimes k[s_1,\dots,s_{N''}], & \\ \nonumber
\bar{f}_{k}(s_1,\dots,s_{N''})\in H^{2}(X,\mathcal{E}nd(\EEE))\otimes k[s_1,\dots,s_{N''}], & \\ \nonumber
A_{\alpha ,k}(t_1\dots,t_N)\in\Gamma(U_{\alpha},\mathcal{E}nd(\EEE)\otimes\Omega^{1}_X (D))\otimes k[t_1,\dots,t_N], & \\ \nonumber
\kappa_k(t_1\dots,t_N)\in H^1(X,\mathcal{E}nd(\EEE)\otimes\Omega^{1}_X (D))\otimes k[t_1,\dots,t_N], & \\ \nonumber
K_{\alpha \beta, k}(t_1\dots,t_N)\in\Gamma(U_{\alpha \beta},\mathcal{E}nd(\EEE)\otimes\Omega^{1}_X (D))\otimes k[t_1,\dots,t_N]
\end{eqnarray} 
with the following properties:\\
$(i)$ $G_{\alpha \beta, 0}=g_{\alpha \beta}$, and $A_{\alpha,0}$ define $\EEE$ and resp. $\nabla$ with respect to the
local trivializations $e_{\alpha}$ of $\EEE$ on $U_{\alpha}$.\\ 
$(ii)$ $\bar{f_k}=0, F_{\alpha \beta \gamma, k}=0$ for $k=0,1$, and $K_{\alpha \beta,0}=0$.\\
$(iii)$ For each $k\geq 1$, let $\bar{f}^{(k)}=\sum_{i\leq k}\bar{f_i}$, and let $\bar{I}^{(k+1)}$ be the ideal generated by $(s_1,\dots,s_{N''})^{k+2}$ and the image of the adjoint map $\bar{f}^{(k)*}:H^{2}(X, \mathcal{E}nd(\EEE))^*\rar k[s_1,\dots,s_{N''}]$. Then $(F_{\alpha \beta \gamma, k+1})$ is a cocycle modulo $\bar{I}^{(k+1)}$ and $\bar{f}_{k+1}$ is a lift to $W''\otimes k[s_1,\dots,s_{N''}]$ of the cohomology class
$$[(F_{\alpha \beta \gamma, k+1}\mod \bar{I}^{(k+1)})]\in W''\otimes k[s_1,\dots,s_N'']/\bar{I}^{(k+1)}.$$\\
$(iv)$ For any $k\geq 1$, set $G^{(k)}_{\alpha \beta}=\sum_{i\leq k}G_{\alpha \beta, i}$. Then \begin{equation}\label{equation 21}G^{(k)}_{\alpha \beta}G^{(k)}_{\beta \gamma}G^{(k)}_{\gamma \alpha}\equiv(1+F_{\alpha \beta \gamma, k+1})\mod \bar{I}^{(k+1)}.\end{equation}\\
$(v)$ For each $k\geq 1$, set $\kappa^{(k)}=\sum_{i\leq k} \kappa_i$, and let $J^{(k+1)}$ be the ideal generated by $(t_1,\dots,t_N)^{k+2}$ and by the image of the adjoint map $\kappa^{(k)*}:H^{1}(X, \mathcal{E}nd(\EEE)\otimes\Omega^{1}(D))^*\rar k[t_1,\dots,t_N]$. Then $(K_{\alpha \beta, k+1})$ is a cocycle modulo $J^{(k+1)}+\bar{I}^{(k+2)}$ and $\kappa_{k+1}$ is a lift of the cohomology class $$[(K_{\alpha \beta, k+1}\mod (J^{k+1}+\bar{I}^{(k+2)})]\in H^1(X, \mathcal{E}nd(\EEE)\otimes\Omega^1(D))\otimes k[[t_1,\dots,t_N]]/(J^{k+1}+\bar{I}^{(k+1)})$$ in  $H^1(X, \mathcal{E}nd(\EEE)\otimes\Omega^1(D))\otimes k[t_1,\dots,t_N]$.\\
$(vi)$ For any $k\geq 0$, set   $A^{(k)}_{\alpha}=\sum_{i\leq k} A_{\alpha,i}$. Then \begin{equation}\label{equation 22}
K_{\alpha \beta, k+1}\equiv\ud G^{(k+1)}_{\alpha \beta}- G^{(k+1)}_{\alpha \beta}A^{(k)}_{\beta}+A^{(k)}_{\alpha} G^{(k+1)}_{\alpha \beta} \mod (J^{k+1}+\bar{I}^{(k+2)}).\end{equation}
In these properties, $G^{(k)}_{\alpha \beta}$ is considered as an endomorphism of $\EEE_k$ over $U_{\alpha \beta}\times V_k$
given by its matrix with respect to two bases: $e_{\alpha}$ for the source, $e_{\beta}$ for the target, where $\EEE_k$ 
is the vector bundle over $X\times V_k$ defined by the $1$-cocycle $(G^{(k)}_{\alpha \beta})$. Similarly $(A^{(k)}_{\alpha})$ is
understood as a $1$-cochain with values in $\mathcal{E}nd(\EEE_k)\otimes\Omega^{1}(D)$, and in formula (\ref{equation 22}), $A^{(k)}_{\alpha}$
(resp $A^{(k)}_{\beta}$) is represented by its matrix in the basis $e_{\alpha}$ (resp $e_{\beta}$).
The base changes $G_{\alpha \beta,k+1}$ acting on both sides of (\ref{equation 22}), reduce to $G_{\alpha \beta,0}$,
since the only nonzero terms in (\ref{equation 22}) are of degree $k+1$, and everything is reduced modulo $(t_1,\dots,t_N)^{k+2}$.
Thus (\ref{equation 22}) defines $(K_{\alpha \beta, k+1})$ as a $1$-cochain with values in $\mathcal{E}nd(\EEE)\otimes\Omega^{1}(D)$. Going over to the proof, we first remark that $G_{\alpha \beta,0}, A_{\alpha,0}$ are already known, and we have to indicate the choice of $G_{\alpha \beta,k}, A_{\alpha,k}$ inductively on $k\geq 0$, the other data $F_{\alpha \beta \gamma, k},\bar{f}_{k},
K_{\alpha \beta, k},\kappa_{k}$ being recovered via formulas (\ref{equation 21}),(\ref{equation 22}).
To initialize the induction, first look at (\ref{equation 21}) with $k=0$. Then $F_{\alpha \beta \gamma, 1}=0$
by $(ii)$, which implies
\begin{equation}\label{equation 23} G_{\alpha \beta,1}G_{\beta \gamma,0}G_{\gamma \alpha,0}+G_{\alpha \beta,0}G_{\beta \gamma,1}G_{\gamma \alpha,0}
+G_{\alpha \beta,0}G_{\beta \gamma,0}G_{\gamma \alpha,1}=0\end{equation}
The latter equation expresses the fact that $(G_{\alpha \beta,1})$ is a $1$-cocycle with values in $\mathcal{E}nd(\EEE)\otimes (W'')^*$.
As in Section 2, we can write $G_{\alpha \beta,1}=\sum a^{(i)}_{\alpha \beta}g_{\alpha\beta}s_i$, where $[(a^{(i)}_{\alpha \beta})]$ for $i=1,\dots,N''$ form the basis of $W''$ dual to $s_1,\dots,s_{N''}$.
Here and further on, we adopt the following convention:
all the $G_{\alpha \beta,k}$ (resp. $G^{(k)}_{\alpha \beta})$ are regarded as $1$-cochains with values in $\mathcal{E}nd(\EEE)$
(resp. $\mathcal{E}nd(\EEE_k)$) given by matrices with respect to two bases: $e_{\alpha}$ for the source, $e_{\beta}$ for the target.
We denote by $\EEE_k$ the vector bundle over $X\times V_k$ defined by the cocycle $G^{(k)}_{\alpha \beta}$.

Hence, looking at the first term $G_{\alpha \beta,1}G_{\beta \gamma,0}G_{\gamma \alpha,0}$ of the sum in (\ref{equation 23}), we see that it
represents the matrix of $G_{\alpha \beta,1}$ with respect to one and the same basis $e_{\alpha}$ for the source
and the target. The same applies to the other two summands in (\ref{equation 23}), thus (\ref{equation 23}) is the cocycle condition
$$ a_{\alpha \beta}+a_{\beta \gamma}+a_{\gamma \alpha}=0$$
put down via matrices of the three summands in the basis $e_{\alpha}$.

We will adopt the same convention for cochains with values in $\mathcal{E}nd(\EEE)\otimes\Omega^1 (D)$ or 
in $\mathcal{E}nd(\EEE_k)\otimes\Omega^1 (D)$. The $A_{\alpha,k}$ (resp $A^{(k)}_{\alpha}$) will be considered
as matrices representing cochains in $\mathcal{E}nd(\EEE)\otimes\Omega^1 (D)$ (resp.  $\mathcal{E}nd(\EEE_k)\otimes\Omega^1 (D))$
in the basis $e_{\alpha}$ over $U_{\alpha}$.
Now write (\ref{equation 22})  for $k=0$ :
\begin{equation}\label{equation 24}
K_{\alpha \beta,1}=\ud G_{\alpha \beta,1}- G_{\alpha \beta,1}A_{\beta, 0}+A_{\alpha,0} G_{\alpha \beta, 1};
\end{equation} we take into account that $I^{(1)}=J^{(1)}=0$ and that
$\ud G_{\alpha \beta,0}- G_{\alpha \beta,0}A_{\beta, 0}+A_{\alpha,0} G_{\alpha \beta, 0}=0$, the latter equation being 
a form of (\ref{equation 6}) in which $G_{\alpha \beta,0}$ are considered as matrices of endomorphisms of $\EEE$
written with respect to two bases: $e_{\alpha}$ for the source, $e_{\beta}$ for the target, and $(\ud G_{\alpha \beta,0})$
is a cocycle representing $\At^{D}(\EEE)$.

The r.h.s of (\ref{equation 24}), with the same convention that $G_{\alpha \beta,1}$ are matrices of endormorphisms of $\EEE$
with respect to the two bases, is just the cochain $(\ud a_{\alpha \beta}+[A_{\alpha}, a_{\alpha \beta}])\in\check{C}^1(\mathfrak U, \mathcal{E}nd(\EEE)\otimes\Omega^1 (D))$. As in (\ref{equation 16}), we can rewrite it as $\nabla_{\mathcal{E}nd(\EEE)}(a)$, where $a=(G_{\alpha \beta,1})$, and this representation makes obvious that $(K_{\alpha \beta,1})$ is 
a $1$-cocycle.
The differential $d_1$ of the spectral sequence being induced by $\nabla_{\mathcal{E}nd(\EEE)}$, we see that the
cocycle $(K_{\alpha \beta,1})$ is a coboundary  if and only if $$[a]=[G_{\alpha \beta,1}]\in\ker(H^{1}(X,\mathcal{E}nd(\EEE))\otimes (W'')^*\rar H^{1}(X,\mathcal{E}nd(\EEE)\otimes\Omega^1 (D))\otimes (W'')^*).$$ Assuming that $(K_{\alpha \beta,1})$ is a coboundary, we choose
$(A_{\alpha,1})$ as a solution to 
\begin{equation}\label{equation 25}
\tilde{K}_{\alpha \beta,1}= G_{\alpha \beta,0}A_{\beta,1}-A_{\alpha,1} G_{\alpha \beta,0}
\end{equation} 
Such a solution can be chosen as a linear form in $s_1,\dots,s_{N''}$. Single out one such solution and denote it 
$(A^{''}_{\alpha,1})=(A^{''}_{\alpha,1}(s_1,\dots,s_{N''}))$. Let $(A'^{(i)}_{\alpha,1}),i=1,\dots,N'$ be a basis 
of $H^{0}(\mathfrak U, \mathcal{E}nd(\EEE)\otimes\Omega^1 (D))$ dual to the coordinates $t_1,\dots,t_{N'}$ on $W'$.
Then set
$$A_{\alpha,1}=A^{''}_{\alpha,1}(s_1,\dots,s_{N''})+\sum_{i=1}^{N'}A'^{(i)}_{\alpha,1}t_i.$$
Now assume that the forms (\ref{eqnarray.6..}) have been constructed up to degree $k\geq 0$ and define them for degree $k+1$.
Start by $F_{\alpha \beta \gamma,k+1}$, which we define, as in the proof of Theorem \ref{theorem 1}, to be a lift
to $\check{Z}^{2}(\mathfrak U, \mathcal{E}nd(\EEE))\otimes k[s_1,\dots,s_{N''}]$, of the homogeneous component
of degree $k+1$ in $G^{(k)}_{\alpha \beta}G^{(k)}_{\beta \gamma}G^{(k)}_{\gamma \alpha}$, which is a cocycle
modulo $\bar{I}^{(k+1)}+(s_1,\dots,s_{N''})^{k+1}$ by the proof of Lemma \ref{lemma 1}.

Then we set $\bar{f}_{k+1}$ equal to any lift of the cohomology class $(F_{\alpha \beta \gamma,k+1})\in
H^{2}(X,\mathcal{E}nd(\EEE))\otimes k[[s_1,\dots,s_{N''}]]/\bar{I}^{(k+1)}$ to $H^{2}(X,\mathcal{E}nd(\EEE))\otimes k[s_1,\dots,s_{N''}]$.
By construction, $(F_{\alpha \beta \gamma,k+1})$ is a coboundary modulo $\bar{I}^{(k+2)}+(s_1,\dots,s_{N''})^{k+2}$,
so there exists a cochain in $$\check{C}^1(\mathfrak U,\mathcal{E}nd(\EEE))\otimes k[s_1,\dots,s_{N''}]/(\bar{I}^{(k+2)}+(s_1,\dots,s_{N''})^{k+2})$$
whose coboundary is $(F_{\alpha \beta \gamma,k+1})\mod (\bar{I}^{(k+2)}+(s_1,\dots,s_{N''})^{k+2})$, and
$(G_{\alpha \beta,k+1})$ is defined as any lift of this cochain to $\check{C}^1(\mathfrak U,\mathcal{E}nd(\EEE))\otimes k[s_1,\dots,s_{N''}]$
which is homogeneous of degree $k+1$ in $s_1,\dots,s_{N''}$.
Consider now the expression\\
\begin{eqnarray*}\tilde{K}_{\alpha \beta,k+1}=\ud G^{(k+1)}_{\alpha \beta}- G^{(k+1)}_{\alpha \beta}A^{(k)}_{\beta}+A^{(k)}_{\alpha} G^{(k+1)}_{\alpha \beta}=\ud G^{(k)}_{\alpha \beta}- G^{(k)}_{\alpha \beta}A^{(k-1)}_{\beta}+A^{(k-1)}_{\alpha} G^{(k)}_{\alpha \beta}&\\ \nonumber+\ud G_{\alpha \beta,k+1}- G_{\alpha \beta, k+1}A^{(k-1)}_{\beta}+A^{(k-1)}_{\alpha}G_{\alpha \beta,k+1}- G^{(k+1)}_{\alpha \beta}A_{\beta,k}+A_{\alpha,k}G^{(k+1)}_{\alpha \beta},\end{eqnarray*}
By the induction hypothesis, $\tilde{K}_{\alpha \beta, k}=\ud G^{(k)}_{\alpha \beta}- G^{(k)}_{\alpha \beta}A^{(k-1)}_{\beta}+A^{(k-1)}_{\alpha} G^{(k+1)}_{\alpha \beta}$ is a cocycle modulo $J^{k}+\bar{I}^{(k+1)}$ and is a coboundary modulo $J^{k+1}+\bar{I}^{(k+1)}+(t_1,\dots,t_N)^{k+1}.$
From (\ref{equation 22}), in order that $\tilde{K}_{\alpha \beta,k+1}$ has no homogeneous components of order $<k+1$ modulo $J^{k+1}+\bar{I}^{(k+1)}+(t_1,\dots,t_N)^{k+1}$, we have to set $(A_{\alpha,k})$ to be a solution of \begin{equation}\label{equation s1}
G^{(k+1)}_{\alpha\beta}A_{\beta,k}
-A_{\alpha,k}G^{(k+1)}_{\alpha\beta}\equiv\tilde{K}_{\alpha\beta,k}\mod(J^{k+1}+\bar{I}^{k+1}+(t_1,\dots,t_N)^{k+1}),\end{equation}
where $G^{(k+1)}_{\alpha \beta}$ can be replaced by $G_{\alpha \beta,0}$, so that (\ref{equation s1}) is an equation for
the cochain $(G_{\alpha \beta,0}A_{\beta,k})$ with values in $\mathcal{E}nd(\EEE)\otimes\Omega^{1}(D)$.
Thus we come to the following inductive procedure:
define $K_{\alpha \beta,k+1}$ as the homogeneous form of degree $k+1$ in $\tilde{K}_{\alpha \beta,k+1}$.
Assuming it is a cocycle modulo $(J^{k+1}+\bar{I}^{(k+2)})$, we define $\kappa_{k+1}$ as a lift to 
$H^1(X,\mathcal{E}nd(\EEE)\otimes\Omega^{1}(D))\otimes k[t_1,\dots,t_N]$ of the cohomology class
$[(K_{\alpha \beta,k+1})\mod J^{k+1}+\bar{I}^{(k+2)}]$. Then $J^{(k+2)}$ is well-defined and
$(K_{\alpha \beta,k+1})$ becomes a coboundary modulo $J^{(k+2)}+\bar{I}^{(k+2)}+(t_1,\dots,t_N)^{k+2}$.
Hence we can construct $(A_{\alpha,k+1})$ as a lift to $\check{C}^0(\mathfrak U, \mathcal{E}nd(\EEE)\otimes\Omega^{1}(D))\otimes k[t_1,\dots,t_N]$
of a solution  $(A_{\alpha,k+1})$ of the equation 
$$G_{\alpha \beta,0}A_{\beta,k+1}-A_{\alpha,k+1}G_{\alpha \beta,0}\equiv\tilde{K}_{\alpha \beta,k+1}\mod
(J^{k+2}+\bar{I}^{(k+2)}+(t_1,\dots,t_N)^{k+2}).$$
Thus we have to verify that $(K_{\alpha \beta,k+1})$ is a cocycle.
\end{proof}
\begin{lemma}
$(K_{\alpha \beta,k+1})$ defined as the homogeneous component of degree $k+1$ of
$\tilde{K}_{\alpha \beta,k+1}$, is a $1$-cocycle modulo $J^{k+1}+\bar{I}^{(k+2)}.$
\end{lemma}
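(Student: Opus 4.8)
The plan is to obtain the \v Cech cocycle identity for $(K_{\alpha \beta, k+1})$ from a covariant Leibniz rule applied to the triple product $G^{(k+1)}_{\alpha \beta}G^{(k+1)}_{\beta \gamma}G^{(k+1)}_{\gamma \alpha}$, in direct analogy with the way the multiplicative relation was used in the proof of Lemma \ref{lemma 1}. Throughout I suppress the superscript $(k+1)$ and write $D_{\alpha \beta}(H)=\ud H-HA^{(k)}_{\beta}+A^{(k)}_{\alpha}H$ for a matrix $H$ over $U_{\alpha \beta}$, so that by (\ref{equation 22}) the cochain $\tilde{K}_{\alpha \beta, k+1}$ is the degree-$(k+1)$ homogeneous part of $D_{\alpha \beta}(G_{\alpha \beta})$. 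The first step is the Leibniz identity
\[
D_{\alpha \gamma}(G_{\alpha \beta}G_{\beta \gamma})=D_{\alpha \beta}(G_{\alpha \beta})\,G_{\beta \gamma}+G_{\alpha \beta}\,D_{\beta \gamma}(G_{\beta \gamma}),
\]
which is immediate because the two terms $\mp G_{\alpha \beta}A^{(k)}_{\beta}G_{\beta \gamma}$ cancel. Iterating it on the triple product and using that $D_{\alpha \alpha}(H)=\ud H+[A^{(k)}_{\alpha},H]$ is exactly $\nabla_{\mathcal{E}nd(\EEE)}(H)$ written in the basis $e_{\alpha}$, one gets
\[
\nabla_{\mathcal{E}nd(\EEE)}(G_{\alpha \beta}G_{\beta \gamma}G_{\gamma \alpha})=\tilde{K}_{\alpha \beta}G_{\beta \gamma}G_{\gamma \alpha}+G_{\alpha \beta}\tilde{K}_{\beta \gamma}G_{\gamma \alpha}+G_{\alpha \beta}G_{\beta \gamma}\tilde{K}_{\gamma \alpha},
\]
where $\tilde{K}_{\ast \ast}=D_{\ast \ast}(G_{\ast \ast})$.

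The geometric input is that, after the choice of $G_{\alpha \beta, k+1}$ made just before the statement, the vector bundle part extends, i.e. $G_{\alpha \beta}G_{\beta \gamma}G_{\gamma \alpha}\equiv 1\mod(J^{k+1}+\bar{I}^{(k+2)})$ in the relevant degrees (this is the vanishing of $F_{\alpha \beta \gamma, k+1}$ modulo the ideal, guaranteed by the proof of Lemma \ref{lemma 1}). Since $\nabla_{\mathcal{E}nd(\EEE)}=\ud+[A^{(k)}_{\alpha},-]$ carries the ideal $(J^{k+1}+\bar{I}^{(k+2)})$ into itself and kills the constant matrix $1$, the left-hand side above is $\equiv 0$, whence
\[
\tilde{K}_{\alpha \beta}G_{\beta \gamma}G_{\gamma \alpha}+G_{\alpha \beta}\tilde{K}_{\beta \gamma}G_{\gamma \alpha}+G_{\alpha \beta}G_{\beta \gamma}\tilde{K}_{\gamma \alpha}\equiv 0\mod(J^{k+1}+\bar{I}^{(k+2)}).
\]

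It then remains to read off the degree-$(k+1)$ part and to interpret it intrinsically. Using $G_{\beta \gamma}G_{\gamma \alpha}\equiv G_{\alpha \beta}^{-1}$ and $G_{\alpha \beta}G_{\beta \gamma}\equiv G_{\alpha \gamma}$ modulo the ideal, the three summands become $\tilde{K}_{\alpha \beta}G_{\alpha \beta}^{-1}$, $G_{\alpha \beta}(\tilde{K}_{\beta \gamma}G_{\beta \gamma}^{-1})G_{\alpha \beta}^{-1}$ and $G_{\alpha \gamma}(\tilde{K}_{\gamma \alpha}G_{\gamma \alpha}^{-1})G_{\alpha \gamma}^{-1}$, which are nothing but the matrices, in the one basis $e_{\alpha}$, of the sections $K_{\alpha \beta}$, $K_{\beta \gamma}$, $K_{\gamma \alpha}$ of $\mathcal{E}nd(\EEE)\otimes\Omega^{1}(D)$. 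Here I use the inductive hypothesis in an essential way: the choices of $A_{\alpha, i}$ for $i\leq k$, i.e. the solutions of (\ref{equation 25}) and (\ref{equation s1}), were made precisely so that the homogeneous components of $\tilde{K}_{\alpha \beta}$ of degree $\leq k$ vanish modulo the ideal; consequently the only degree-$(k+1)$ contribution is obtained by pairing $\tilde{K}_{\ast \ast, k+1}$ with the degree-$0$ transition matrices $g_{\ast \ast}$, so that the higher-order parts of the conjugating factors $G_{\ast \ast}$ drop out. Extracting the degree-$(k+1)$ component thus gives $K_{\alpha \beta, k+1}+K_{\beta \gamma, k+1}+K_{\gamma \alpha, k+1}\equiv 0$ in the basis $e_{\alpha}$, which is exactly the asserted cocycle condition $\check{d}(K_{\alpha \beta, k+1})\equiv 0\mod(J^{k+1}+\bar{I}^{(k+2)})$.

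The formal ingredients -- the Leibniz rule and the vanishing of the triple product -- are immediate, so the only real obstacle is the last bookkeeping step: one must check carefully that the lower-degree homogeneous parts of $\tilde{K}_{\alpha \beta}$ genuinely drop out modulo $J^{k+1}+\bar{I}^{(k+2)}$, for otherwise spurious coboundary terms coming from the higher-order parts of the conjugating transition matrices would contaminate the degree-$(k+1)$ identity. This is where the inductive choices of the $A_{\alpha, i}$ and the precise shape of the ideals $J^{k+1}$ (in the variables $t$) and $\bar{I}^{(k+2)}$ (in the variables $s$) must be tracked; once this vanishing is confirmed, the cocycle condition follows immediately from the displayed identity.
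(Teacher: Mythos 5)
Your proof is correct and takes essentially the same route as the paper's: both verify the \v Cech cocycle identity for $K_{\cdot,k+1}$ by substituting the defining expression $\ud G^{(k+1)}_{\alpha\beta}-G^{(k+1)}_{\alpha\beta}A^{(k)}_{\beta}+A^{(k)}_{\alpha}G^{(k+1)}_{\alpha\beta}$ into the cyclic sum, letting the $A^{(k)}$-terms telescope, and then invoking the multiplicative near-cocycle property of the $G^{(k+1)}_{\alpha\beta}$ together with the inductive vanishing of the lower-degree parts of $\tilde{K}$. Your packaging via the Leibniz rule for $D_{\alpha\beta}$ and the single congruence $G_{\alpha\beta}G_{\beta\gamma}G_{\gamma\alpha}\equiv 1$ is a tidier presentation of the paper's explicit expansion, where the same cancellation appears as $\ud(F_{\alpha\beta\gamma,k+1})-\ud(F_{\alpha\beta\gamma,k+1})\equiv 0$.
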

\begin{proof}
By the induction hypothesis, we have\\
$$\ud G^{(k)}_{\alpha \beta}\equiv G^{(k)}_{\alpha \beta}A^{(k-1)}_{\beta}-A^{(k-1)}_{\alpha} G^{(k)}_{\alpha \beta} \mod (J^{k}+\bar{I}^{(k+1)}),$$
$$G^{(k)}_{\alpha \beta}G^{(k)}_{\beta \gamma}G^{(k)}_{\gamma \alpha}\equiv 1+F_{\alpha \beta \gamma, k+1}\mod \bar{I}^{(k+1)},$$
and by construction,
\begin{eqnarray*}G_{\alpha\beta,k+1}G^{(k)}_{\beta\gamma}G^{(k)}_{\gamma\alpha}+G^{(k)}_{\alpha\beta}G_{\beta\gamma,k+1}G^{(k)}_{\gamma\alpha}+ G^{(k)}_{\alpha\beta}G^{(k)}_{\beta\gamma}G_{\gamma\alpha,k+1}&\ \\ \equiv-F_{\alpha\beta\gamma,k+1}\mod(\bar{I}^{(k+2)}+(s_1,\dots,s_{N''})^{k+2}),\end{eqnarray*}
$$K_{\alpha \beta,k+1}\equiv\ud G^{(k+1)}_{\alpha \beta}- G^{(k+1)}_{\alpha \beta}A^{(k)}_{\beta}+A^{(k)}_{\alpha} G^{(k+1)}_{\alpha \beta} \mod (J^{k+1}+\bar{I}^{(k+1)}).$$
Denote $G^{(k+1)}_{\alpha \beta},G^{(k)}_{\alpha \beta},G_{\alpha \beta,k+1},A^{(k)}_{\alpha},K_{\alpha \beta,k+1}$ by 
$G_{\alpha \beta},G'_{\alpha \beta}, G''_{\alpha \beta},A_{\alpha},K_{\alpha \beta}$ respectively.\\ We have 
\arraycolsep=0.5ex
\begin{gather}\label{gather.7..}
K_{\alpha \beta}G_{\beta \gamma}G_{\gamma \alpha}+G_{\alpha \beta}K_{\beta \gamma}G_{\gamma \alpha}+G_{\alpha \beta}G_{\beta \gamma}K_{\gamma \alpha}\equiv\ud G_{\alpha \beta}G_{\beta \gamma}G_{\gamma \alpha}+G_{\alpha \beta}\ud G_{\beta \gamma}G_{\gamma \alpha}\\ \nonumber+G_{\alpha \beta}G_{\beta \gamma}\ud G_{\gamma \alpha}-G_{\alpha \beta}A_{\beta}G_{\beta \gamma}G_{\gamma \alpha}+A_{\alpha}G_{\alpha \beta}G_{\beta \gamma}G_{\gamma \alpha}-G_{\alpha \beta}G_{\beta \gamma}A_{\gamma}G_{\gamma \alpha}+G_{\alpha \beta}A_{\beta}G_{\beta \gamma}G_{\gamma \alpha}\\ \nonumber- G_{\alpha \beta}\ud G_{\beta \gamma}G_{\gamma \alpha}A_{\alpha}+
G_{\alpha \beta}\ud G_{\beta \gamma}A_{\gamma}G_{\gamma \alpha}\equiv\ud G'_{\alpha \beta}G'_{\beta \gamma}G'_{\gamma \alpha}+G'_{\alpha \beta}\ud G'_{\beta \gamma}G'_{\gamma \alpha}+G'_{\alpha \beta}G'_{\beta \gamma}\ud G'_{\gamma \alpha}\\ \nonumber+\ud G''_{\alpha \beta}G'_{\beta \gamma}G'_{\gamma \alpha}+G'_{\alpha \beta}\ud G''_{\beta \gamma}G'_{\gamma \alpha}+G'_{\alpha \beta}G'_{\beta \gamma}\ud G''_{\gamma \alpha}+\ud G'_{\alpha \beta}G''_{\beta \gamma}G'_{\gamma \alpha}+\ud G'_{\alpha \beta}G'_{\beta \gamma}G''_{\gamma \alpha}\\ \nonumber+G''_{\alpha \beta}\ud G'_{\beta \gamma} G'_{\gamma \alpha}+G'_{\alpha \beta}\ud G'_{\beta \gamma}G''_{\gamma \alpha}+G''_{\alpha \beta}G'_{\beta \gamma}\ud G'_{\gamma \alpha}\equiv\ud(G'_{\alpha \beta}G'_{\beta \gamma}G'_{\gamma \alpha})-G'_{\alpha \beta}G''_{\beta \gamma}\ud G'_{\gamma \alpha}\\ \nonumber+\ud(G''_{\alpha \beta} G'_{\beta \gamma} G'_{\gamma \alpha}+G'_{\alpha \beta}G''_{\beta \gamma}G'_{\gamma \alpha}+G'_{\alpha \beta}G'_{\beta \gamma}G''_{\gamma \alpha})
\equiv\ud (F_{\alpha \beta \gamma,k+1})-\ud (F_{\alpha \beta \gamma,k+1})
\equiv 0\\ \nonumber \mod (J^{k+1}+\bar{I}^{(k+2)})
\end{gather}
This ends the proof.
\end{proof} 
Coming back to the proof of the Theorem, we define $f_k$ as any lift to $\HH^{2}(\CCC^{\fatdot})\otimes k[t_1,\dots,t_N]$,
homogeneous of degree $k$ in $t_1,\dots,t_N$, of the cohomology class of the cochain
\begin{equation}\label{equation 27}((K_{\alpha \beta,k}), (F_{\alpha \beta \gamma,k}))\mod (J^k +\bar{I}^{k+1})\in\check{C}^{2}(\mathfrak U,\CCC^{\fatdot})\otimes k[[t_1,\dots,t_N]]/(J^k +\bar{I}^{(k+1)}),\end{equation} which we are assuming to be a cocycle.
Then quotienting by $I$ makes (\ref{equation 27}) a coboundary of $((A_{\alpha,k}), (G_{\alpha \beta,k}))$, and the pair
$(G^{(k)}_{\alpha \beta},(A^{(k)}_{\alpha}))$ defines $(\EEE_k,\nabla_k)$ over $X\times V_k$.
It remains to prove that (\ref{equation 27}) is a cocycle with values in $\CCC^{\fatdot}\otimes k[t_1,\dots,t_N]/(J^k +\bar{I}^{k+1})$.
One part of this, namely, the equation
$$\check{d}(K_{\alpha \beta,k})=\nabla_{\mathcal{E}nd(\EEE)}(F_{\alpha \beta \gamma,k})$$
is verified by the computation (\ref{gather.7..}). The second part $\check{d}(F_{\alpha \beta \gamma,k})=0$ is guaranteed by Lemma \ref{lemma 1}.
\section{Integrable connections}\label{Int connec}
\subsection{Higher order deformations of integrable connections}
From now on, we take into account the fact that $(\EEE,\nabla)$ is an integrable connection with fixed divisor 
of poles $D$ and consider deformations of $(\EEE,\nabla)$ preserving the integrability and the divisor of poles.
In Theorem \ref{theorem 1.2}, we characterized the first order deformations of $(\EEE,\nabla)$ in terms
of the hypercohomology group $\HH^{1}(X, \FFF^{\fatdot})=\HH^{1}(X, \RRR^{\fatdot}).$
Now we will consider the second order deformation and respectively the first obstruction.
So, we search for the extension 
\begin{eqnarray}\label{eqnarray.8..}%
\tilde{g}_{\alpha \beta}=(1+\epsilon a_{\alpha \beta,1}+\epsilon^2 a_{\alpha \beta,2})g_{\alpha \beta} & \\ \nonumber
\tilde{A}_{\alpha}=A_{\alpha}+\epsilon A_{\alpha,1}+\epsilon^2 A_{\alpha,2}  & \\ \nonumber
\end{eqnarray}
of $(g_{\alpha \beta},A_{\alpha})$ to $V=\Spec k[\epsilon]/(\epsilon^3)$.
To order $1$, we have the conditions (\ref{equation 17}):
\begin{equation}
\check{d}(a_{\alpha \beta,1})=0, \check{d}(A_{\alpha,1})=\nabla(a_{\alpha \beta,1}), \nabla(A_{\alpha,1})=0.
\end{equation}
Expanding (\ref{equation 7}) to order $2$, we obtain in addition to (\ref{equation a}) and $(\ref{equation h})$, the equation
\begin{equation}\label{equation 28}
\nabla A_{\alpha,2}=-A_{\alpha,1}\wedge A_{\alpha,1},
\end{equation}
Note that $\nabla(A_{\alpha,1})=0$ implies that $\nabla(A_{\alpha,1}\wedge A_{\alpha,1})=0$.
One easily verifies the following relations
\begin{eqnarray}\label{eqnarray.9..}%
\nonumber\nabla(A_{\alpha,1}\wedge A_{\alpha,1})=0 & \\ \nonumber
\check{d}(A_{\alpha,1}\wedge A_{\alpha,1})=-\nabla(A_{\alpha,1}a_{\alpha \beta,1}-a_{\alpha \beta,1}A_{\beta,1})  & \\ \nonumber
\check{d}(A_{\alpha,1}a_{\alpha \beta,1}-a_{\alpha \beta,1}A_{\beta,1}) =\nabla(a_{\alpha \beta,1}a_{\beta \gamma,1}\circlearrowleft),
& \\ \nonumber
\end{eqnarray}
where $\circlearrowleft$ denotes the skew-symmetrization on the subscripts $\alpha,\beta,\gamma$.
These three equations express the fact that the triple 
$$((a_{\alpha \beta,1}a_{\beta \gamma,1}\circlearrowleft),(A_{\alpha,1}a_{\alpha \beta,1}-a_{\alpha \beta,1}A_{\beta,1}),
(A_{\alpha,1}\wedge A_{\alpha,1}))\in\check{C}^{2}(\mathfrak U,\RRR^{\fatdot})$$
is a cocycle with respect to the differential $D=\nabla\pm\check{d}$.
Then the conditions saying that (\ref{eqnarray.8..}) is an integrable connection with fixed divisor of poles $D$
modulo $\epsilon^3$, that is, formulas (\ref{eqnarray.4..}), (\ref{eqnarray.5..}) and (\ref{equation 28}), mean that the cocycle defined above is the coboundary of the cochain $((a_{\alpha \beta,2}),(\AAA_{\alpha,2}))$:$$D(a_2,\AAA_2)=
((a_{\alpha \beta,1}a_{\beta \gamma,1}\circlearrowleft),(A_{\alpha,1}a_{\alpha \beta,1}-a_{\alpha \beta,1}A_{\beta,1}),
(A_{\alpha,1}\wedge A_{\alpha,1})).$$
As the cocycle (\ref{eqnarray.9..}) represents the Yoneda square of $[a_1,\AAA_1]$, we deduce:
\begin{proposition}
The first order deformation $(\EEE_1,\nabla_1)$ of $(\EEE,\nabla)$ defined by the cocycle $((a_{\alpha \beta,1}), (\AAA_{\alpha,1}))$
extend to an integrable connection $(\EEE_2,\nabla_2)$ over $X\times V_2$ with fixed divisor of poles $D$ if and
only if the Yoneda square $[a_1,\AAA_1]\circ [a_1,\AAA_1]$ is 
zero in $\HH^{2}(\RRR^{\fatdot})$.
\end{proposition}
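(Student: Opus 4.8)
The plan is to read the three second-order conditions as the three components, in the total \v Cech complex $(\check{C}^{\fatdot}(\mathfrak U,\RRR^{\fatdot}),D)$ with total differential $D=\check{d}+(-1)^{p}\nabla_{\mathcal{E}nd(\EEE)}$ on $\check{C}^{p}(\mathfrak U,\RRR^{q})$, of a single coboundary equation. I assume the first-order data $(a_1,\AAA_1)=((a_{\alpha \beta,1}),(\AAA_{\alpha,1}))$ already satisfies (\ref{equation 17}), so that $(a_1,\AAA_1)\in\check{Z}^{1}(\mathfrak U,\RRR^{\fatdot})$, and search for the second-order correction $(a_2,\AAA_2)=((a_{\alpha \beta,2}),(\AAA_{\alpha,2}))\in\check{C}^{1}(\mathfrak U,\RRR^0)\oplus\check{C}^{0}(\mathfrak U,\RRR^1)$ in (\ref{eqnarray.8..}). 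First I would check that the $\epsilon^2$-parts of the bundle cocycle condition, of the transition rule (\ref{equation 6}) and of the integrability condition (\ref{equation 7})---that is, (\ref{eqnarray.5..}), (\ref{eqnarray.4..}) and (\ref{equation 28})---read coordinatewise as
$$\check{d}(a_2)=-\sigma^{2,0},\qquad \check{d}(\AAA_2)-\nabla_{\mathcal{E}nd(\EEE)}(a_2)=\sigma^{1,1},\qquad \nabla_{\mathcal{E}nd(\EEE)}(\AAA_2)=-\sigma^{0,2},$$
where $\sigma^{2,0}=(a_{\alpha \beta,1}a_{\beta \gamma,1}\circlearrowleft)$, $\sigma^{1,1}=(A_{\alpha,1}a_{\alpha \beta,1}-a_{\alpha \beta,1}A_{\beta,1})$ and $\sigma^{0,2}=(A_{\alpha,1}\wedge A_{\alpha,1})$ are the three terms of (\ref{eqnarray.9..}). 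These are precisely the $(2,0)$-, $(1,1)$- and $(0,2)$-components of the single equation $D(a_2,\AAA_2)=\sigma$ in $\check{C}^{2}(\mathfrak U,\RRR^{\fatdot})$, where $\sigma$ denotes the $2$-cochain with components $(-\sigma^{2,0},\sigma^{1,1},-\sigma^{0,2})$.

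Next I would verify that $\sigma$ is a $D$-cocycle, so that asking whether it is a $D$-coboundary is meaningful. The condition $\check{d}(\sigma^{2,0})=0$ is the fact, already used in Section \ref{Kvb}, that the Yoneda product $a_1\circ a_1$ is a \v Cech $2$-cocycle; the remaining three conditions are exactly the relations (\ref{eqnarray.9..}): $\nabla_{\mathcal{E}nd(\EEE)}(\sigma^{0,2})=0$ follows from the first-order integrability $\nabla_{\mathcal{E}nd(\EEE)}(\AAA_1)=0$ and the Leibniz rule, while the two mixed relations $\check{d}(\sigma^{0,2})=-\nabla_{\mathcal{E}nd(\EEE)}(\sigma^{1,1})$ and $\check{d}(\sigma^{1,1})=\nabla_{\mathcal{E}nd(\EEE)}(\sigma^{2,0})$ follow from $\check{d}(a_1)=0$, $\check{d}(\AAA_1)=\nabla_{\mathcal{E}nd(\EEE)}(a_1)$ together with the Leibniz rule $\nabla_{\mathcal{E}nd(\EEE)}(XY)=\nabla_{\mathcal{E}nd(\EEE)}(X)Y+Y\nabla_{\mathcal{E}nd(\EEE)}(X)$. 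I would then identify $[\sigma]$ with the Yoneda square: for the DG-algebra structure on $\RRR^{\fatdot}$ given by composition of endomorphisms and wedge of forms, the components $\sigma^{2,0},\sigma^{1,1},\sigma^{0,2}$ are the three pieces of the cup product of the total-degree-$1$ class $[a_1,\AAA_1]$ with itself, so that $[\sigma]=[a_1,\AAA_1]\circ[a_1,\AAA_1]$ in $\HH^{2}(X,\RRR^{\fatdot})$ (up to an overall sign, which is immaterial for the vanishing statement); for the $(2,0)$- and $(1,1)$-slots this is the same computation as in Section \ref{Obstruc}.

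The conclusion is then formal. Any total-degree-$1$ cochain of $\check{C}^{\fatdot}(\mathfrak U,\RRR^{\fatdot})$ is automatically of the shape $(a_2,\AAA_2)$ above, so the system $D(a_2,\AAA_2)=\sigma$ admits a solution if and only if $\sigma\in\check{B}^{2}(\mathfrak U,\RRR^{\fatdot})$, i.e. if and only if $[\sigma]=0$. For the forward implication, an extension $(\EEE_2,\nabla_2)$ over $X\times V_2$ supplies such a solution, its connection part $A_{\alpha,2}\in\Gamma(U_{\alpha},\mathcal{E}nd(\EEE)\otimes\Omega^1(D))$ keeping the divisor of poles equal to $D$; for the converse, a $D$-primitive $(a_2,\AAA_2)$ of $\sigma$ produces, via (\ref{eqnarray.8..}), an integrable connection with poles $D$ over $V_2$ restricting to $(\EEE_1,\nabla_1)$. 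Hence $(\EEE_1,\nabla_1)$ extends if and only if $[a_1,\AAA_1]\circ[a_1,\AAA_1]=0$ in $\HH^{2}(X,\RRR^{\fatdot})$.

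I expect the only genuinely delicate point to be the sign bookkeeping, not any conceptual difficulty. One must pin down the signs in the total differential $D$ and in the three components of $\sigma$ so that (\ref{eqnarray.5..}), (\ref{eqnarray.4..}), (\ref{equation 28}) become the three slots of one equation and, simultaneously, so that (\ref{eqnarray.9..}) reads as $D\sigma=0$; tracing this through shows that the $(1,1)$-slot enters with the sign opposite to the other two, which is precisely why the triple is said to be a cocycle for $D=\nabla\pm\check{d}$. A secondary point, handled as in Section \ref{Kvb}, is that $a_2$ may be chosen skew-symmetric without altering its \v Cech class, so that the resulting $(\tilde{g}_{\alpha \beta})$ is a genuine multiplicative cocycle of transition functions.
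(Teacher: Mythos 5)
Your proposal is correct and follows essentially the same route as the paper: the paper likewise packages the $\epsilon^2$-parts of the cocycle, transition and integrability conditions (its (\ref{eqnarray.5..}), (\ref{eqnarray.4..}), (\ref{equation 28})) into the single equation $D(a_2,\AAA_2)=\sigma$ in the total complex $\check{C}^{\fatdot}(\mathfrak U,\RRR^{\fatdot})$, observes via (\ref{eqnarray.9..}) that $\sigma$ is a $D$-cocycle representing the Yoneda square of $[a_1,\AAA_1]$, and concludes that solvability is equivalent to $[\sigma]=0$. Your explicit sign bookkeeping and the remark on choosing $a_2$ skew-symmetric are finer than what the paper records (it only writes $D=\nabla\pm\check{d}$), but they do not change the argument.
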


Thus the integrable case looks similar to the non-integrable one (compare to Prop \ref{proposition 2}), provided 
we replace the $2$-term complex $\CCC^{\fatdot}$ by $\RRR^{\fatdot}$. As far as only the hypercohomology
$\HH^1$ and $\HH^2$ are concerned, we can also truncate $\RRR^{\fatdot}$ at the level $2$:
$\HH^i(\RRR^{\fatdot})=\HH^i(\tilde{\RRR}^{\fatdot})$, for $i=0,1,2$, where
$\tilde{\RRR}^{\fatdot}=[\RRR^0\rar\RRR^1\rar\ker(\RRR^2\rar\RRR^3)].$
\subsection{Kuranishi space of integrable connections}
Now, we turn to the construction of the Kuranishi space of integrable connections with fixed divisor
of poles $D$. Its construction is completely similar to the one in the non-integrable case, so instead
of giving a proof of the next theorem, we will only supply some remarks indicating modifications that should be
brought to the proof of Theorem \ref{theorem 1.3} in order to get the proof in the integrable case. 

The spectral sequence $E^{p,q}_{1}=H^q(X,\RRR^p)$ converging to $\HH^{\fatdot}(\RRR^{\fatdot})$ is not
concentrated on two vertical strings, so here $\HH^{2}(\RRR^{\fatdot})$ has a filtration consisting of 
three nonzero summands which are subquotients of $H^0(X,\mathcal{E}nd(\EEE)\otimes\Omega^2_X(*D)),
H^1(X,\mathcal{E}nd(\EEE)\otimes\Omega^1_X(D),H^2(X,\mathcal{E}nd(\EEE)).$
Hence, we have to add to the forms (\ref{eqnarray.6..}) two more homogeneous forms of degree $k$, say
\begin{eqnarray}\label{eqnarray.10..}%
L_{\alpha,k}(t_1,\dots,t_N)\in\Gamma(U_{\alpha},\mathcal{E}nd(\EEE)\otimes\Omega^2_X(*D))\otimes k[t_1,\dots,t_N],  & \\ \nonumber
l_k(t_1,\dots,t_N)\in H^{0}(X,\mathcal{E}nd(\EEE)\otimes\Omega^2_X(*D))\otimes k[t_1,\dots,t_N],  
& \\ \nonumber
\end{eqnarray}
and modify according the conditions $(i),\dots,(vi)$ to which the forms (\ref{eqnarray.6..}),(\ref{eqnarray.10..}) should satisfy.
Remark also that the long exact cohomology sequence for $\CCC^{\fatdot}$ introduced in the proof of Theorem \ref{theorem 1.3} remains exact only in its $4$ terms when
$\CCC^{\fatdot}$ is replaced by $\RRR^{\fatdot}$.
\begin{theorem}\label{theorem 1.4}
Let $X$ be a complete scheme of finite type over $k$ or a complex space (in which case $k=\CC$), $\nabla$ an integrable connection on $\EEE$ with fixed divisor of poles $D$, $\RRR^{\fatdot}$
the complex of sheaves on $X$ defined above, $W=\HH^{1}(X,\RRR^{\fatdot})$, $(\delta_1\dots,\delta_N)$ a basis of $W$ and $(t_1,\dots,t_N)$ the dual coordinates on $W$.
Let $W_k$ denote the $k$-th infinitesimal neighborhood of $0$ in $W$, and $(\EEE_1, \nabla_1)$ the universal first
deformation of $(\EEE,\nabla)$ over $X\times W_1$ in the class of integrable connections with fixed divisor of poles $D$. Then there exists a formal power series
$$f(t_1,\dots,t_N)=\sum_{k=2}^{\infty} f_{k}(t_1\dots,t_N)\in \HH^{2}(X, \RRR^{\fatdot})[[t_1,\dots,t_N]],$$
where $f_k$ is  homogeneous of degree $k$ ($k\geq 2$), with the following property.
Let $I$ be the ideal of $k[[t_1,\dots,t_N]]$ generated by the image of the map $f^*:\HH^{2}(X,\RRR^{\fatdot})^{*}\rar k[[t_1,\dots,t_N]]$, adjoint to $f$.
Then for any $k\geq 2$, the pair $(\EEE_1,\nabla_1)$ extends to an integrable connection $(\EEE_k,\nabla_k)$ on $X\times V_k$, where
$V_k$ is the closed subscheme of $W_k$ defined by the ideal $I\otimes k[[t_1,\dots,t_N]]/(t_1,\dots,t_N)^{k+1}.$
\end{theorem}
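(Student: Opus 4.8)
The plan is to carry over, essentially verbatim, the inductive construction from the proof of Theorem \ref{theorem 1.3}, replacing the two-term complex $\CCC^{\fatdot}$ by $\RRR^{\fatdot}$; since only $\HH^1$ and $\HH^2$ intervene, one may equally work with the truncation $\tilde{\RRR}^{\fatdot}=[\RRR^0\rar\RRR^1\rar\ker(\RRR^2\rar\RRR^3)]$, which has the same $\HH^i$ for $i\leq 2$. The one structural change is that the spectral sequence $E_1^{p,q}=H^q(X,\RRR^p)$ is no longer supported on the two strings $p=0,1$, so $\HH^2(X,\RRR^{\fatdot})$ now carries a three-step filtration whose graded pieces are subquotients of $H^2(X,\mathcal{E}nd(\EEE))$, of $H^1(X,\mathcal{E}nd(\EEE)\otimes\Omega^1_X(D))$ and of $H^0(X,\mathcal{E}nd(\EEE)\otimes\Omega^2_X(*D))$. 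To match this third slot I adjoin to the forms (\ref{eqnarray.6..}) the two families (\ref{eqnarray.10..}): the local $\Omega^2$-valued cochains $L_{\alpha,k}$, recording the integrability defect of the deformed connection in degree $k$, together with their classes $l_k\in H^0(X,\mathcal{E}nd(\EEE)\otimes\Omega^2_X(*D))$. Conditions $(i)$--$(vi)$ are augmented by a further item pinning $l_{k+1}$ as a lift of $[(L_{\alpha,k+1})]$ modulo the relevant ideal, in exact parallel with the roles of $\bar f_k$ and $\kappa_k$.

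The inductive step then runs inside the full double complex $(\check{C}^{p}(\mathfrak U,\RRR^q),\check{d},(-1)^p\nabla_{\mathcal{E}nd(\EEE)})$. Having fixed all forms through degree $k$, I produce $F_{\alpha\beta\gamma,k+1}$, $K_{\alpha\beta,k+1}$ and now also $L_{\alpha,k+1}$ as the degree-$(k+1)$ defects of, respectively, the cocycle condition $G^{(k)}_{\alpha\beta}G^{(k)}_{\beta\gamma}G^{(k)}_{\gamma\alpha}=1$, the connection-transition rule (\ref{equation 22}), and the integrability condition $\ud A^{(k)}_{\alpha}+A^{(k)}_{\alpha}\wedge A^{(k)}_{\alpha}=0$; by the higher-degree analogue of (\ref{equation 28}) the last is $L_{\alpha,k+1}=\sum_{i+j=k+1}A_{\alpha,i}\wedge A_{\alpha,j}$. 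One caveat attends the coordinates: the long exact sequence of Theorem \ref{theorem 1.3} now survives only through its first four terms, so the clean splitting $0\to W'\to W\to W''\to 0$ is lost, the quotient being cut out inside $H^1(X,\mathcal{E}nd(\EEE))$ by both $d_1$ and the secondary differential $d_2$. The basis $(\delta_i)$ of $W=\HH^1(X,\RRR^{\fatdot})$ and its dual cocycles must therefore be adapted to the full filtration rather than to a two-step one.

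As in the non-integrable case, the crux is the cocycle lemma: the triple
$$\bigl((F_{\alpha\beta\gamma,k+1}),(K_{\alpha\beta,k+1}),(L_{\alpha,k+1})\bigr)\in\check{C}^{2}(\mathfrak U,\RRR^{\fatdot})$$
must be shown to be a cocycle for the total differential $D=\check{d}\pm\nabla_{\mathcal{E}nd(\EEE)}$ modulo $J^{k+1}+\bar I^{(k+2)}$. This amounts to four identities, one in each total degree $p+q=3$. The first, $\check{d}(F_{\alpha\beta\gamma,k+1})\equiv 0$, is Lemma \ref{lemma 1}. The second, $\nabla_{\mathcal{E}nd(\EEE)}(F_{\alpha\beta\gamma,k+1})\equiv\pm\check{d}(K_{\alpha\beta,k+1})$, is the computation (\ref{gather.7..}) transcribed without change. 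The two genuinely new ones are $\nabla_{\mathcal{E}nd(\EEE)}(K_{\alpha\beta,k+1})\equiv\pm\check{d}(L_{\alpha,k+1})$ and $\nabla_{\mathcal{E}nd(\EEE)}(L_{\alpha,k+1})\equiv 0$; the latter is the Bianchi identity, immediate from $\nabla_{\mathcal{E}nd(\EEE)}^2=0$ and already visible in the first line of (\ref{eqnarray.9..}).

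I expect the remaining identity $\nabla_{\mathcal{E}nd(\EEE)}(K_{\alpha\beta,k+1})\equiv\pm\check{d}(L_{\alpha,k+1})\mod(J^{k+1}+\bar I^{(k+2)})$ to be the main obstacle, since it couples the connection-transition defect to the curvature defect across adjacent slots of the complex. Its proof is the higher-degree counterpart of the middle relation in (\ref{eqnarray.9..}): it rests on the graded Leibniz rule for $\nabla_{\mathcal{E}nd(\EEE)}$ together with integrability, and is formally the same telescoping as (\ref{gather.7..}) carried out one level up. Granting all four identities, the triple is a $D$-cocycle; I set $f_{k+1}$ equal to a lift of its class in $\HH^2(X,\RRR^{\fatdot})$, and quotienting by the ideal $I$ generated by the image of $f^{*}$ renders it a $D$-coboundary whose trivializing cochain supplies the gauge corrections $G_{\alpha\beta,k+1}$ and $A_{\alpha,k}$. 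Then $(G^{(k)}_{\alpha\beta},A^{(k)}_{\alpha})$ defines the required integrable connection $(\EEE_k,\nabla_k)$ over $X\times V_k$, and the induction closes.
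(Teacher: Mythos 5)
Your proposal follows essentially the same route as the paper, which itself only sketches this proof as a list of modifications to the argument for Theorem \ref{theorem 1.3}: replace $\CCC^{\fatdot}$ by $\RRR^{\fatdot}$ (or its truncation), adjoin the curvature-defect forms $L_{\alpha,k}$ and $l_k$ of (\ref{eqnarray.10..}) to account for the third graded piece of $\HH^{2}(X,\RRR^{\fatdot})$, and note that the long exact sequence survives only in its first four terms. Your explicit enumeration of the four components of the total-differential cocycle condition, with the two new identities reducing to the graded Leibniz rule and the Bianchi identity as in (\ref{eqnarray.9..}), is a correct and in fact more detailed account than the paper provides.
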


\begin{remark}
The complex $\RRR^{\fatdot}$ may be replaced by its subcomplex
$0\rar\mathcal{E}nd(\EEE)\rar\mathcal{E}nd(\EEE)\otimes\Omega^1_X(D)\rar\mathcal{E}nd(\EEE)\otimes\Omega^2_X(2D)\rar\dots.$
Theorem \ref{theorem 1.3} will remain valid if we replace $\RRR^{\fatdot}$ in its statement by this smaller complex.
\end{remark}
In the case where $\nabla$ is an integrable logarithmic connection, we can reduce $\RRR^{\fatdot}$ further to
$\LLL^{\fatdot}=[0\rar\mathcal{E}nd(\EEE)\rar\mathcal{E}nd(\EEE)\otimes\Omega^1_X(\log D)\rar\mathcal{E}nd(\EEE)\otimes\Omega^2_X(\log D)\rar\dots].$
We now go over to integrable logarithmic connections. 
\subsection{Integrable logarithmic connections}
\begin{definition}
Let $X$ be a nonsingular complex projective variety, $S$ a normal crossing divisor with
smooth components. An integrable logarithmic connection $E$ on $X$ is a pair $(\EEE, \nabla)$ where $\EEE$ is a torsion free coherent sheaf of $\OOO_X$-modules on $X$ and $\nabla:\EEE\rar\EEE\otimes\Omega^{1}_X(\log S)$ is  $\CC$-linear and satisfies the Leibniz rule and the integrability condition $\nabla^2=0$ (see in the beginning of Sect. \ref{Connections}).\end{definition} 

Let $\DDD_X$ be the sheaf of algebraic differential operators on $X$ and let $\DDD_X[\log S]$ be the $\OOO_X$-subalgebra generated
by the germs of tangent vector fields which preserve the ideal sheaf of the reduced scheme $S$. 
According to \cite{Ni}, a logarithmic connection on $X$ with singularities over $S$ can be interpreted as a $\DDD_X[\log S]$-module which is coherent and torsion free as an $\OOO_X$-module.

\begin{remark}
A nonsingular integrable connection on $X$ is simply a $\DDD_X$-module which is coherent as an $\OOO_X$-module.
\end{remark}
 
\begin{definition}
An infinitesimal deformation of an integrable logarithmic connection $\EEE$ is a pair $(\EEE_V, \alpha)$, where $\EEE_V$ is a 
family of logarithmic connections parameterized by $V=\Spec(\CC[\epsilon])/\epsilon^{2}$, with an 
isomorphism $\alpha:{\EEE_V}/{\epsilon\EEE_V}\rar\EEE$.  
\end{definition}
We define $T_{\EEE}$ as the set of all equivalence classes of infinitesimal deformations of $\EEE$.
Let the sheaf $\KKK_{\EEE}$ be the kernel of $\nabla:\mathcal{E}nd(\EEE)\otimes\Omega^{1}(\log S) \rar\mathcal{E}nd(\EEE)\otimes\Omega^{2}(\log S).$ 
As the curvature of $\nabla$ is $0$, the image of $\nabla:\EEE\rar\EEE\otimes\Omega^{1}(\log S)$,
is contained in $\KKK_{\EEE}$. If $A\in H^{0}(X, \KKK_{\EEE})$, then $\nabla+\epsilon A$ is a family of logarithmic connections on 
the underlying sheaf $\EEE$ parameterized by $V$. This gives a linear map $p: H^{0}(X,\KKK_{\EEE})\rar T_{\EEE}.$
\begin{theorem}\label{theo}
If an integrable logarithmic connection $\EEE$ is locally free, the vector space $T_{\EEE}$ of infinitesimal deformations
of $\EEE$ (which equals the tangent space at $[\EEE]$ to the moduli scheme $\MMM$ of stable integrable logarithmic connections
when $\EEE$ is stable) is canonically isomorphic to the first hypercohomology $\HH^{1}(\CCC_{\EEE})$ of the complex
$\CCC_{\EEE}=(\nabla:\mathcal{E}nd(\EEE)\rar\KKK_{\EEE})$, which is in turn equal to the first hypercohomology of the logarithmic de Rham
complex $\LLL^{\fatdot}=(\mathcal{E}nd(\EEE)\otimes\Omega^{\fatdot}_{X}(\log S), \nabla)$ associated to $\mathcal{E}nd(\EEE)$.
\end{theorem}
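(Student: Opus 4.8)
The plan is to establish the two isomorphisms of the statement separately: first the deformation-theoretic identification $T_\EEE \cong \HH^1(\CCC_\EEE)$, obtained by the same \v Cech computation that produced Theorems \ref{theorem 1.1} and \ref{theorem 1.2}; and then the purely homological identification $\HH^1(\CCC_\EEE) = \HH^1(\LLL^\fatdot)$, obtained by recognizing $\CCC_\EEE$ as the canonical truncation $\tau_{\leq 1}\LLL^\fatdot$.

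For the first isomorphism I would use that $\EEE$ is locally free to choose a sufficiently fine cover $\mathfrak U = (U_\alpha)$ trivializing $\EEE$, with transition cocycle $(g_{\alpha\beta})$ and connection matrices $(A_\alpha)$. An infinitesimal deformation $(\EEE_V, \alpha)$ is then encoded by deformed data $\tilde g_{\alpha\beta} = g_{\alpha\beta} + \epsilon g_{\alpha\beta,1}$ and $\tilde A_\alpha = A_\alpha + \epsilon A_{\alpha,1}$, giving the \v Cech pair $(a, \AAA_1)$ with $a = (g_{\alpha\beta,1}g^{-1}_{\alpha\beta})$ and $\AAA_1 = (A_{\alpha,1})$ expressed in the bases $e_\alpha$. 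By Proposition \ref{proposition 3} the conditions that $(\tilde g_{\alpha\beta}, \tilde A_\alpha)$ define a connection with fixed poles are $\check d(a) = 0$ and $\check d(\AAA_1) = \nabla_{\mathcal{E}nd(\EEE)}(a)$, while expanding the integrability relation (\ref{equation 7}) to first order gives $\nabla_{\mathcal{E}nd(\EEE)}(\AAA_1) = 0$, exactly as in Proposition \ref{proposition 4}. This last relation says precisely that $\AAA_1$ is a $0$-cochain with values in $\KKK_\EEE = \ker(\nabla : \mathcal{E}nd(\EEE)\otimes\Omega^1(\log S) \rar \mathcal{E}nd(\EEE)\otimes\Omega^2(\log S))$, so that $(a, \AAA_1) \in \check Z^1(\mathfrak U, \CCC_\EEE)$. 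The gauge transformations $e_\alpha \mapsto e_\alpha(1 + \epsilon h_\alpha)$ change $(a, \AAA_1)$ by the coboundary $(\check d h, \nabla_{\mathcal{E}nd(\EEE)}(h))$, so equivalence classes of deformations correspond bijectively to $\HH^1(X, \CCC_\EEE)$; the independence of the class from all choices is checked exactly as for Theorem \ref{theorem 1.1}, which gives the canonical isomorphism $T_\EEE \cong \HH^1(\CCC_\EEE)$.

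For the second isomorphism I would observe that, by the very definition of $\KKK_\EEE$, one has $\KKK_\EEE = \ker(\nabla : \LLL^1 \rar \LLL^2)$, so the two-term complex $\CCC_\EEE = [\mathcal{E}nd(\EEE) \rar \KKK_\EEE]$ is nothing but the canonical truncation $\tau_{\leq 1}\LLL^\fatdot$ (the image of $\nabla : \mathcal{E}nd(\EEE) \rar \LLL^1$ lands in $\KKK_\EEE$ because $\nabla^2 = 0$, so the degree-$1$ differential of $\CCC_\EEE$ agrees with that of $\LLL^\fatdot$ followed by the inclusion $\KKK_\EEE \hookrightarrow \LLL^1$). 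The inclusion $\iota : \CCC_\EEE \hookrightarrow \LLL^\fatdot$ therefore induces isomorphisms on the cohomology sheaves $\HHH^q$ for $q \leq 1$, with $\HHH^q(\CCC_\EEE) = 0$ for $q > 1$. Feeding this into the hypercohomology spectral sequence $E_2^{p,q} = H^p(X, \HHH^q(-)) \Rightarrow \HH^{p+q}(-)$, the morphism $\iota$ induces an isomorphism on all terms with $q \leq 1$, hence on the abutment in total degree $\leq 1$; in particular $\HH^1(\CCC_\EEE) \cong \HH^1(\LLL^\fatdot)$. Equivalently, from the short exact sequence of complexes $0 \rar \CCC_\EEE \rar \LLL^\fatdot \rar \mathcal Q^\fatdot \rar 0$ one checks that $\mathcal Q^\fatdot$ is supported in degrees $\geq 1$ with $\mathcal Q^1 = (\mathcal{E}nd(\EEE)\otimes\Omega^1(\log S))/\KKK_\EEE$ injecting into $\mathcal Q^2$, so that $\HHH^0(\mathcal Q^\fatdot) = \HHH^1(\mathcal Q^\fatdot) = 0$, whence $\HH^0(\mathcal Q^\fatdot) = \HH^1(\mathcal Q^\fatdot) = 0$ and the long exact hypercohomology sequence yields the isomorphism.

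I expect the only genuine content to lie in the first isomorphism, and there the main point to be careful about is matching the sheaf-theoretic definition of $T_\EEE$ (via pairs $(\EEE_V, \alpha)$) with the \v Cech description, together with verifying that the first-order integrability condition is exactly $\AAA_1 \in \KKK_\EEE$; both are direct adaptations of the computations behind Propositions \ref{proposition 3} and \ref{proposition 4}, so no new difficulty arises. The hypothesis that $\EEE$ is locally free is used precisely to allow the trivializing cover and the transition-function formalism, without which the \v Cech argument would not apply. The second isomorphism is then formal once $\CCC_\EEE = \tau_{\leq 1}\LLL^\fatdot$ is recognized, canonical truncation preserving hypercohomology in degrees $\leq 1$.
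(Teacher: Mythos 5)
Your argument is correct, but it is worth saying up front that the paper does not actually prove Theorem~\ref{theo}: its ``proof'' is the single line ``See \cite{Ni}'', deferring entirely to Nitsure. What you have written is a genuine, self-contained proof that reuses the paper's own machinery. Your first step is exactly the \v Cech computation of Propositions~\ref{proposition 3} and~\ref{proposition 4} transplanted from the $\Omega^1(D)/\Omega^2(*D)$ setting to the logarithmic setting: the conditions $\check d(a)=0$, $\check d(\AAA_1)=\nabla_{\mathcal{E}nd(\EEE)}(a)$, $\nabla_{\mathcal{E}nd(\EEE)}(\AAA_1)=0$ say precisely that $(a,\AAA_1)\in\check Z^1(\mathfrak U,\CCC_{\EEE})$, and the gauge action produces exactly the $1$-coboundaries; this is the logarithmic analogue of Theorem~\ref{theorem 1.2}, where the paper's $\FFF^1=\ker(\mathcal{E}nd(\EEE)\otimes\Omega^1(D)\rar\mathcal{E}nd(\EEE)\otimes\Omega^2(*D))$ plays the role of $\KKK_{\EEE}$. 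Your second step, identifying $\CCC_{\EEE}$ with the canonical truncation $\tau_{\leq 1}\LLL^{\fatdot}$ and concluding $\HH^1(\CCC_{\EEE})\cong\HH^1(\LLL^{\fatdot})$ either by the distinguished triangle or by the quotient complex $\mathcal Q^{\fatdot}$, is correct and is essentially the same observation the paper makes informally when it asserts $\HH^1(\FFF^{\fatdot})=\HH^1(\RRR^{\fatdot})$ (``these complexes have the same $1$-cocycles and $1$-coboundaries''); your spectral-sequence/long-exact-sequence formulation is cleaner and actually justifies it. Two small caveats. First, to encode $\EEE_V$ by deformed transition matrices you implicitly need $\EEE_V$ to be locally free over $\OOO_X[\epsilon]$, which follows from flatness of the family over $V$ together with local freeness of the central fibre (Nakayama); this deserves a sentence, since the ambient definition of a logarithmic connection only requires torsion-free coherent sheaves. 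Second, the parenthetical claim that $T_{\EEE}$ equals the tangent space to the moduli scheme $\MMM$ at $[\EEE]$ when $\EEE$ is stable is not addressed by your argument and cannot be: it presupposes the existence of $\MMM$ as a (fine or coarse) moduli scheme, which is precisely the content of \cite{Ni} and must remain an external input.
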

\begin{proof}
See \cite{Ni}.
\end{proof}

We deduce the construction of the Kuranishi space of integrable logarithmic connections over $X$.
\subsection{Kuranishi space of integrable logarithmic connections}
\begin{theorem}
Let $X$ be a smooth projective variety over an algebraically closed field $k$ (or on $\CC$), $\EEE$ a vector bundle on $X$, $\nabla$ an integrable logarithmic connection on $\EEE$, $\LLL^{\fatdot}$ the complex of sheaves on $X$ defined in Theorem \ref{theo}, $W=\HH^{1}(X,\LLL^{\fatdot})$, $(\delta_1\dots,\delta_N)$ a basis of $W$ and $(t_1,\dots,t_N)$ the dual coordinates on $W$.
Let $W_k$ denote the $k$-th infinitesimal neighborhood of $0$ in $W$, and $(\EEE_1, \nabla_1)$ the universal first order
deformation of $(\EEE,\nabla)$ over $X\times W_1$ in the class of integrable logarithmic connections with fixed divisor of poles $D$. Then there exists a formal power series
$$f(t_1,\dots,t_N)=\sum_{k=2}^{\infty} f_{k}(t_1\dots,t_N)\in \HH^{2}(X,\LLL^{\fatdot})[[t_1,\dots,t_N]],$$
where $f_k$ is  homogeneous of degree $k$ ($k\geq 2$), with the following property.
Let $I$ be the ideal of $k[[t_1,\dots,t_N]]$ generated by the image of the map $f^*:\HH^{2}(X,\LLL^{\fatdot})^{*}\rar k[[t_1,\dots,t_N]]$, adjoint to $f$. Then for any $k\geq 2$, the pair $(\EEE_1,\nabla_1)$ extends to an integrable logarithmic connection $(\EEE_k,\nabla_k)$ on $X\times V_k$, where
$V_k$ is the closed subscheme of $W_k$ defined by the ideal $I\otimes k[[t_1,\dots,t_N]]/(t_1,\dots,t_N)^{k+1}.$
\end{theorem}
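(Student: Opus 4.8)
The plan is to run the inductive construction from the proofs of Theorems \ref{theorem 1.3} and \ref{theorem 1.4} essentially verbatim, systematically replacing the complex $\RRR^{\fatdot}$ by the logarithmic de Rham complex $\LLL^{\fatdot}$. The starting point is Theorem \ref{theo}, which identifies $W=\HH^1(X,\LLL^{\fatdot})$ with the space of first order deformations of $(\EEE,\nabla)$ in the class of integrable logarithmic connections, and thereby singles out $\HH^2(X,\LLL^{\fatdot})$ as the receptacle for the obstructions. First I would set up the spectral sequence $E_1^{p,q}=H^q(X,\LLL^p)\Rightarrow\HH^{p+q}(X,\LLL^{\fatdot})$ and use its $E_2$-term to choose a basis $(\delta_1,\dots,\delta_N)$ of $W$ together with dual coordinates $(t_1,\dots,t_N)$.

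As in Theorem \ref{theorem 1.4}, this spectral sequence is not concentrated on two vertical strings, so $\HH^2(X,\LLL^{\fatdot})$ carries a filtration whose graded pieces are subquotients of $H^2(X,\mathcal{E}nd(\EEE))$, $H^1(X,\mathcal{E}nd(\EEE)\otimes\Omega^1_X(\log S))$ and $H^0(X,\mathcal{E}nd(\EEE)\otimes\Omega^2_X(\log S))$. Accordingly, the inductive procedure must carry, in addition to the families $G_{\alpha\beta,k}$ and $A_{\alpha,k}$, the auxiliary homogeneous forms $L_{\alpha,k}$ and $l_k$ recording the logarithmic $2$-form component, exactly as in (\ref{eqnarray.10..}). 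The inductive step then produces the deformed transition cochains $G_{\alpha\beta,k}$, the connection cochains $A_{\alpha,k}$ with logarithmic poles along $S$, and the obstruction forms $f_k\in\HH^2(X,\LLL^{\fatdot})$, with leading term the Yoneda square $f_2=[a_1,\AAA_1]\circ[a_1,\AAA_1]$.

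The single structural fact that makes the whole construction close up inside the logarithmic class is that $\Omega^{\fatdot}_X(\log S)$ is a differential graded subalgebra of $\Omega^{\fatdot}_X(*S)$: it is stable under both the exterior differential $\ud$ and the wedge product. Consequently the operators appearing in the obstruction calculus, namely $\nabla_{\mathcal{E}nd(\EEE)}$ (assembled from $\ud$ and commutation with the $A_\alpha$) and the quadratic terms $A_{\alpha,i}\wedge A_{\alpha,j}$ coming from the integrability relation (\ref{equation 28}), send logarithmic cochains to logarithmic cochains. This guarantees that every $A_{\alpha,k}$, every $K_{\alpha\beta,k}$ and $L_{\alpha,k}$, and every obstruction class remains in $\LLL^{\fatdot}$, so that the formal connection $\nabla_k$ built on $\EEE_k$ over $X\times V_k$ is again logarithmic.

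The main obstacle is the verification, at each stage of the induction, that the obstruction $2$-cochain assembled from $(K_{\alpha\beta,k})$, $(F_{\alpha\beta\gamma,k})$ and $(L_{\alpha,k})$ is in fact a cocycle with respect to the total differential $D=\check{d}\pm\nabla_{\mathcal{E}nd(\EEE)}$ of the \v{C}ech double complex of $\LLL^{\fatdot}$. This is the analogue of Lemma \ref{lemma 1} and of the associativity computation (\ref{gather.7..}) in the proof of Theorem \ref{theorem 1.3}, but it must now be carried out respecting all three graded pieces of $\HH^2(X,\LLL^{\fatdot})$, including the new integrability component living in $H^0(X,\mathcal{E}nd(\EEE)\otimes\Omega^2_X(\log S))$. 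Once this cocycle property is in place, quotienting by the ideal $I$ renders the obstruction cochain a coboundary, and the pair $(G^{(k)}_{\alpha\beta},(A^{(k)}_{\alpha}))$ defines the desired extension $(\EEE_k,\nabla_k)$ over $X\times V_k$, completing the induction.
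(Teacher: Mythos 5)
Your proposal follows essentially the same route as the paper, which gives no separate proof of this theorem but simply deduces it from the inductive constructions of Theorems \ref{theorem 1.3} and \ref{theorem 1.4} after replacing $\RRR^{\fatdot}$ by the logarithmic complex $\LLL^{\fatdot}$. You in fact supply the one justification the paper leaves implicit --- that $\Omega^{\fatdot}_X(\log S)$ is stable under $\ud$ and the wedge product, so the whole obstruction calculus closes up inside the logarithmic class --- which is exactly the right point to make.
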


\section{Parabolic connections}\label{Par connec}

Let $X$ be a smooth projective curve of genus $g$.
We set
\[
 T_n :=\left.\left\{ (t_1,\ldots,t_n)\in\overbrace{X\times\cdots\times X}^n
 \right| \text{$t_i\neq t_j$ for $i\neq j$} \right\}
\]
for a positive integer $n$.
For integers $d,r$ with $r>0$, we set
\[
 \Lambda^{(n)}_r(d):=
 \left\{ (\lambda^{(i)}_j)^{1\leq i\leq n}_{0\leq j\leq r-1}
 \in \CC^{nr}
 \left|
 d+\sum_{i,j}\lambda^{(i)}_j=0
 \right\}\right..
\]
Take an element $t=(t_1,\ldots,t_n)\in T_n$ and
$\lambda=(\lambda^{(i)}_j)_{1\leq i\leq n,0\leq j\leq r-1}
\in\Lambda^{(n)}_r(d)$.
\begin{definition}\rm $(E,\nabla,\{l^{(i)}_*\}_{1\leq i\leq n})$ is said to be
a $(t,\lambda)$-parabolic connection of rank $r$ if \\
$(1)$ $E$ is a rank $r$ algebraic vector bundle on $X$, and \\
$(2)$ $\nabla: E \ra E\otimes\Omega_C^1 (\log(t_1+\dots+t_n)$ is a connection, and\\
$(3)$ for each $t_i$, $l^{(i)}_*$ is a filtration of $E|_{t_i}=l^{(i)}_0\supset l^{(i)}_1
 \supset\cdots\supset l^{(i)}_{r-1}\supset l^{(i)}_r=0$
 such that $\dim(l^{(i)}_j/l^{(i)}_{j+1})=1$ and
 $(\Res_{t_i}(\nabla)-\lambda^{(i)}_j\mathrm{id}_{E|_{t_i}})
 (l^{(i)}_j)\subset l^{(i)}_{j+1}$
 for $j=0,\ldots,r-1$.
\end{definition}
\begin{remark}\rm
By condition (3) above and \cite {EV-1}, we have
\[
 \deg E=\deg(\det(E))=-\sum_{i=1}^n\Tr\Res_{t_i}(\nabla)
 =-\sum_{i=1}^n\sum_{j=0}^{r-1}\lambda^{(i)}_j=d.
\]
\end{remark}

Let $T$ be a smooth algebraic scheme which is a covering 
of the moduli stack of $n$-pointed smooth projective curves of genus $g$
over $\CC$ and take a universal family $(\CCC,\tilde{t}_1,\ldots,\tilde{t}_n)$
over $T$.

\begin{definition}\rm
We denote the pull-back of $\CCC$ and $\tilde{t}$ with respect to the morphism
$T\times\Lambda^{(n)}_r(d)\rightarrow T$
by the same characters $\CCC$ and
$\tilde{t}=(\tilde{t}_1,\ldots,\tilde{t}_n)$.
Then $D(\tilde{t}):=\tilde{t}_1+\cdots+\tilde{t}_n$
becomes a family of  Cartier divisors on $\CCC$ flat over
$T\times\Lambda^{(n)}_r(d)$.
We also denote by $\tilde{\lambda}$ the pull-back of the
universal family on $\Lambda^{(n)}_r(d)$ by the morphism
$T\times\Lambda^{(n)}_r(d)\rightarrow \Lambda^{(n)}_r(d)$.
We define a functor
$\MMM^{\boldsymbol{\alpha}}_{\CCC/T}
(\tilde{t},r,d)$
from the category of locally noetherian schemes over
$T\times\Lambda^{(n)}_r(d)$ to the category of sets by
\[
 \MMM^{\boldsymbol{\alpha}}_{\CCC/T}
(\tilde{t},r,d)(S):=
 \left\{ (E,\nabla,\{l^{(i)}_j\}) \right\}/\sim,
\] where
\begin{enumerate}
\item $E$ is a vector bundle on $\CCC_S=\CCC\times_{T\times\Lambda^{(n)}_r(d)} S$ of rank $r$,
\item $\nabla:E\rightarrow E\otimes\Omega^1_{\CCC_S/S}(D(\tilde{t})_S)$
 is a relative connection,
\item $E|_{(\tilde{t}_i)_S}=l^{(i)}_0\supset l^{(i)}_1
 \supset\cdots\supset l^{(i)}_{r-1}\supset l^{(i)}_r=0$
 is a filtration by subbundles such that
 $(\Res_{(\tilde{t}_i)_S}(\nabla)-(\tilde{\lambda}^{(i)}_j)_S)(l^{(i)}_j)
 \subset l^{(i)}_{j+1}$
 for $0\leq j\leq r-1$, $i=1,\ldots,n$,
\item for any geometric point $s\in S$,
$\dim (l^{(i)}_j/l^{(i)}_{j+1})\otimes k(s)=1$ for any $i,j$ and
$(E,\nabla,\{l^{(i)}_j\})\otimes k(s)$
is $\alpha$-stable.
\end{enumerate}
Here $(E,\nabla,\{l^{(i)}_j\})\sim
(E',\nabla',\{l'^{(i)}_j\})$ if
there exist a line bundle $\LLL$ on $S$ and
an isomorphism $\sigma:E\stackrel{\sim}\ra E'\otimes\LLL$ 
such that $\sigma|_{t_i}(l^{(i)}_j)=l'^{(i)}_j$ for any $i,j$ and the diagram
\[
 \begin{CD}
  E @>\nabla>> E\otimes\Omega^1_{\CCC/T}(D(\tilde{t})) \\
  @V \sigma VV  @V\sigma\otimes\mathrm{id} VV \\
  E'\otimes\LLL @>\nabla'>>
  E'\otimes\Omega^1_{\CCC/T}(D(\tilde{t}))\otimes\LLL \\
 \end{CD}
\]
commutes.
\end{definition}
We now can construct the moduli space of this functor.
\begin{theorem}\label{moduli-exists}
There exists a relative fine moduli scheme
\[
 M^{\boldsymbol{\alpha}}_{\CCC/T}(\tilde{t},r,d)\rightarrow T\times\Lambda^{(n)}_r(d)
\]
of $\boldsymbol{\alpha}$-stable parabolic connections of rank $r$ and degree $d$,
which is smooth, irreducible and quasi-projective and has an algebraic symplectic structure.
The fiber $M^{\boldsymbol{\alpha}}_{\CCC_x}(\tilde{t}_x,\lambda)$
over $(x,\lambda)\in T\times\Lambda^{(n)}_r(d)$ is the irreducible moduli space of
$\boldsymbol{\alpha}$-stable $(\tilde{t}_x,\lambda)$ parabolic connections
whose dimension is $2r^2(g-1)+nr(r-1)+2$ if it is non-empty.
\end{theorem}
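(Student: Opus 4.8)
The plan is to follow the construction of Inaba--Iwasaki--Saito and to use the deformation theory of the previous sections to describe the local geometry. Since $X$ is a curve, $\Omega^2_X=0$, so every relative connection is automatically integrable and the relevant deformation complex is two-term, exactly as in Theorem~\ref{theorem 1.3}. First I would establish boundedness of the family of $\boldsymbol{\alpha}$-stable $(\tilde t,\lambda)$-parabolic connections of fixed rank $r$ and degree $d$: after a uniform twist $E(m)$ one arranges $H^1(E(m))=0$ and global generation for every member, so the bundles $E$ are parameterized by a relative Quot scheme, the relative connections $\nabla$ by an affine bundle over it (nonempty fiberwise by the vanishing of $\At^{D}(E)$, cf. Section~\ref{Connections}), and the flags $l^{(i)}_*$ by relative full-flag bundles over the sections $\tilde t_i$. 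This produces a quasi-projective parameter scheme $R\rar T\times\Lambda^{(n)}_r(d)$ on which $\PGL$ acts by change of the chosen basis. I would then verify that $\boldsymbol{\alpha}$-stability coincides with GIT-stability for a suitable linearization and form the quotient $M^{\boldsymbol{\alpha}}_{\CCC/T}(\tilde t,r,d):=R^{s}/\PGL$. Because an $\boldsymbol{\alpha}$-stable parabolic connection is simple (its only horizontal parabolic endomorphisms are scalars), the stabilizers are trivial, the $\PGL$-action on $R^{s}$ is free, and the geometric quotient exists as a quasi-projective scheme carrying a universal family; this is the desired relative fine moduli scheme.

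\textbf{Smoothness, dimension and symplectic form.} The deformation theory above identifies the tangent space of the fiber over $(x,\lambda)$ at $[(E,\nabla,\{l^{(i)}_*\})]$ with $\HH^1(X,\PPP^{\fatdot})$, where $\PPP^{\fatdot}=[\PPP^0\xrightarrow{\nabla_{\mathcal{E}nd(E)}}\PPP^1]$ is the two-term parabolic de~Rham complex whose degree-$0$ term $\PPP^0$ is the sheaf of endomorphisms of $E$ preserving the flags $l^{(i)}_*$, and whose degree-$1$ term $\PPP^1\subset\mathcal{E}nd(E)\otimes\Omega^1_X(D(\tilde t))$ consists of those $\mathcal{E}nd$-valued logarithmic $1$-forms whose residue at each $t_i$ is strictly compatible with the filtration (this last condition freezes $\lambda$). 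For smoothness I would split off the trace, $\PPP^{\fatdot}=\PPP^{\fatdot}_{\det}\oplus\PPP^{\fatdot}_0$: the obstruction to deforming the determinant connection vanishes because rank-$1$ connections on a curve are unobstructed, while the trace-free obstruction $\HH^2(X,\PPP^{\fatdot}_0)$ is Serre-dual to $\HH^0(X,\PPP^{\fatdot}_0)$, which is $0$ by simplicity; hence the Kuranishi map of the parabolic analogue of Theorem~\ref{theorem 1.4} is identically zero and the fiber is smooth. The dimension is then $\dim\HH^1(X,\PPP^{\fatdot})=-\chi(\PPP^{\fatdot})+\dim\HH^0+\dim\HH^2$, and a Riemann--Roch computation gives $-\chi(\PPP^{\fatdot})=2r^2(g-1)+nr(r-1)$, while $\dim\HH^0=\dim\HH^2=1$ (the scalar, resp. the dual determinant, directions), yielding $2r^2(g-1)+nr(r-1)+2$. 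The algebraic symplectic form comes from the composite $\HH^1(X,\PPP^{\fatdot})\otimes\HH^1(X,\PPP^{\fatdot})\rar\HH^2(X,\mathcal{E}nd(E)\otimes\Omega^1_X(D))\mapor{\Tr}H^1(X,\Omega^1_X)\cong\CC$ of cup product, trace and Serre duality; its nondegeneracy is exactly the self-duality of $\PPP^{\fatdot}$ used for smoothness, its antisymmetry comes from the degree shift in the de~Rham complex, and a relative Hodge-theoretic argument shows closedness and algebraic variation over $T\times\Lambda^{(n)}_r(d)$.

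\textbf{Irreducibility, the main obstacle.} This is the deepest step and does not follow from the infinitesimal analysis. Here I would bring in the Riemann--Hilbert correspondence $\RH$, sending a $(\tilde t_x,\lambda)$-parabolic connection to the local-system data of its horizontal sections on the punctured curve. For fixed generic residual data $\RH$ is an analytic isomorphism onto the moduli of the corresponding fundamental-group representations with prescribed local conjugacy classes, and in general it is a proper, surjective, bimeromorphic morphism. Granting that the target representation space is irreducible of the same dimension, and that the fiber is smooth (so its irreducible components coincide with its connected components), properness and generic bijectivity of $\RH$ force the fiber to be connected, hence irreducible; irreducibility of the total moduli scheme then follows from that of the base $T\times\Lambda^{(n)}_r(d)$ together with flatness of the family. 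The hard part is establishing the properness of $\RH$: this requires a valuative-criterion analysis showing that a one-parameter family of stable parabolic connections whose monodromy stays bounded cannot escape the moduli space, which in turn relies on a delicate control of the connection matrices near the poles, and it is precisely at this point that the full strength of the Inaba--Iwasaki--Saito machinery is needed.
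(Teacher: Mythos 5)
The paper does not actually prove this theorem: it is imported verbatim from Inaba's work and the ``proof'' consists of the single line ``See \cite{I}''. Your sketch follows essentially the same route as that reference (a bounded parameter scheme with a GIT quotient, smoothness and the dimension count $2r^2(g-1)+nr(r-1)+2$ via the self-dual two-term parabolic de Rham complex together with the trace splitting, the symplectic form from cup product, trace and Serre duality, and irreducibility via properness of the Riemann--Hilbert map), so it is consistent with the paper; the one detail I would flag is that the connections do not form an affine bundle over the whole Quot scheme, since $\At^{D}(E)$ need not vanish for every $E$, so one must either restrict to the locus where a connection exists or, as Inaba does, parametrize the triples $(E,\nabla,\{l^{(i)}_*\})$ directly.
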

\begin{proof}
See \cite{I}.
\end{proof}

Let $(\tilde{E},\tilde{\nabla},\{\tilde{l}^{(i)}_j\})$
be a universal family on
$\CCC\times_T M^{\boldsymbol{\alpha}}_{\CCC/T}(\tilde{t},r,d)$.
We define a complex $\GGG^{\fatdot}$ by
\begin{align*}
 \GGG^0&:=\left\{ s\in {\mathcal End}(\tilde{E}) \left|
 \text{$s|_{\tilde{t}_i\times M^{\boldsymbol{\alpha}}_{\CCC/T}(\tilde{t},r,d)}
 (\tilde{l}^{(i)}_j)\subset\tilde{l}^{(i)}_j$ for any $i,j$}
 \right\}\right. \\
 \GGG^1&:=\left\{ s\in {\mathcal End}(\tilde{E})
 \otimes\Omega^1_{\CCC/T}(D(\tilde{t})) \left|
 \text{$\Res_{\tilde{t}_i\times M^{\boldsymbol{\alpha}}_{\CCC/T}(\tilde{t},r,d)}(s)
 (\tilde{l}^{(i)}_j)\subset \tilde{l}^{(i)}_{j+1}$ for any $i,j$}
 \right\}\right. \\
 \nabla_{\GGG^{\fatdot}} &: \GGG^0 \lra \GGG^1 ; \quad
 \nabla_{\GGG^{\fatdot}}(s)=\tilde{\nabla}\circ s - s\circ\tilde{\nabla}.
\end{align*}

As in the previous section, we can construct the Kuranishi space of $(t,\lambda)$-parabolic connections on a smooth projective curve
in using the hypercohomology of $\GGG^{\fatdot}$.
\begin{theorem}
Let $X$ be a smooth projective curve over $k$, $(\EEE,\nabla,\{l^{(i)}_{*}\})$ a $(t,\lambda)$-parabolic connection on $X$, $\GGG^{\fatdot}$ the complex of sheaves on $X$ defined above, $W=\HH^{1}(X,\GGG^{\fatdot})$, $(\delta_1\dots,\delta_N)$ a basis of $W$ and $(t_1,\dots,t_N)$ the dual coordinates on $W$.
Let $W_k$ denote the $k$-th infinitesimal neighborhood of $0$ in $W$, and $(\EEE_1,\nabla_1,\{l^{(i)}_{*}\}_{1})$ the universal first order
deformation of $(\EEE,\nabla,\{l^{(i)}_{*}\})$ over $X\times W_1$ in the class of $(t,\lambda)$-parabolic connections. Then there exists a formal power series
$$f(t_1,\dots,t_N)=\sum_{k=2}^{\infty} f_{k}(t_1\dots,t_N)\in \HH^{2}(X, \GGG^{\fatdot})[[t_1,\dots,t_N]],$$
where $f_k$ is  homogeneous of degree $k$ ($k\geq 2$), with the following property.
Let $I$ be the ideal of $k[[t_1,\dots,t_N]]$ generated by the image of the map $f^*:\HH^{2}(X,\GGG^{\fatdot})\rar k[[t_1,\dots,t_N]]$, adjoint to $f$.
Then for any $k\geq 2$, the triple $(\EEE_1,\nabla_1,\{l^{(i)}_{*}\}_{1})$ extends to a $(t,\lambda)$-parabolic connection $(\EEE_k,\nabla_k,\{l^{(i)}_{*}\}_{k})$ on $X\times V_k$, where
$V_k$ is the closed subscheme of $W_k$ defined by the ideal $I\otimes k[[t_1,\dots,t_N]]/(t_1,\dots,t_N)^{k+1}.$
\end{theorem}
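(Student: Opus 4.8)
The plan is to transcribe the proof of Theorem \ref{theorem 1.3} essentially word for word, replacing the two-term complex $\CCC^{\fatdot}$ by $\GGG^{\fatdot}$. The construction is in fact slightly simpler here because $X$ is a curve: one has $\Omega^2_X=0$, so the integrability condition $\nabla^2=0$ is vacuous and $\GGG^{\fatdot}=[\GGG^0\to\GGG^1]$ is a genuine two-term complex. Hence the spectral sequence $E^{p,q}_1=H^q(X,\GGG^p)\Rightarrow\HH^{p+q}(X,\GGG^{\fatdot})$ is supported on the two columns $p=0,1$, degenerates at $E_2$, and produces a long exact sequence exactly as in Theorem \ref{theorem 1.3}. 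First I would extract the short exact sequence $0\to W'\to W\to W''\to 0$ with $W=\HH^1(X,\GGG^{\fatdot})$, $W''=\ker\!\big(H^1(X,\GGG^0)\to H^1(X,\GGG^1)\big)$ and $W'=\coker\!\big(H^0(X,\GGG^0)\to H^0(X,\GGG^1)\big)$, and split the coordinates $t_1,\dots,t_N$ accordingly into a group $s_1,\dots,s_{N''}$ governing the deformation of the underlying parabolic bundle $(\EEE,\{l^{(i)}_*\})$ and a group $t_1,\dots,t_{N'}$ governing the deformation of the connection.

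Next I would run the induction on $k$ of Theorem \ref{theorem 1.3} verbatim, constructing the homogeneous forms $G_{\alpha\beta,k}$, $A_{\alpha,k}$, the auxiliary cochains $F_{\alpha\beta\gamma,k}$, $K_{\alpha\beta,k}$, and the components $\bar f_k,\kappa_k$ of $f_k$, subject to the same properties $(i)$--$(vi)$. The single modification is that every $0$-cochain used as a base change, and every lift $(a_{\alpha\beta,k})$, $(A_{\alpha,k})$, is now taken to be a section of $\GGG^0$, respectively of $\GGG^1$: base changes must preserve the flags and connection deformations must have residues mapping $\tilde l^{(i)}_j$ into $\tilde l^{(i)}_{j+1}$. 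Since the transition data $G_{\alpha\beta,k}=a_{\alpha\beta,k}g_{\alpha\beta}$ have $a_{\alpha\beta,k}\in\GGG^0$, the constant flag $\tilde l^{(i)}_*=l^{(i)}_*$ read in the local trivialization $e_\alpha$ is preserved by the gluing and thus defines a global parabolic structure on $\EEE_k$; the condition $A_{\alpha,k}\in\GGG^1$ guarantees that $(\Res_{\tilde t_i}(\nabla_k)-\lambda^{(i)}_j)(\tilde l^{(i)}_j)\subset\tilde l^{(i)}_{j+1}$ is maintained with the \emph{same} residue eigenvalues $\lambda^{(i)}_j$. By the tangent-space identification $T=\HH^1(X,\GGG^{\fatdot})$ underlying Theorem \ref{moduli-exists}, a pair $(a,\AAA_1)$ lies in $\check Z^1(\mathfrak U,\GGG^{\fatdot})$ precisely when the deformed triple is again a $(t,\lambda)$-parabolic connection, so the datum $(G^{(k)}_{\alpha\beta},A^{(k)}_\alpha)$ produced at each stage is a genuine $k$-th order $(t,\lambda)$-parabolic deformation.

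The step I expect to demand the most care is checking that the parabolic constraints are stable under all the algebraic operations entering the induction, so that the obstruction cochains genuinely take values in $\GGG^{\fatdot}$. Concretely I must verify three closedness properties: that $\GGG^0$ is closed under composition (immediate); that $\nabla_{\GGG^{\fatdot}}$ sends $\GGG^0$ into $\GGG^1$; and that products and brackets of a section of $\GGG^0$ with a section of $\GGG^1$ land back in $\GGG^1$. The second reduces at each $\tilde t_i$ to the identity $\Res_{\tilde t_i}(\nabla\circ s-s\circ\nabla)=[\Res_{\tilde t_i}(\nabla),s|_{\tilde t_i}]$, which maps $l^{(i)}_j$ into $l^{(i)}_{j+1}$ because $\Res_{\tilde t_i}(\nabla)$ acts by the scalar $\lambda^{(i)}_j$ on each graded piece $l^{(i)}_j/l^{(i)}_{j+1}$ while $s|_{\tilde t_i}$ merely preserves the filtration, so their commutator is strictly filtration-decreasing; the third follows from $\Res_{\tilde t_i}(sA)=s|_{\tilde t_i}\,\Res_{\tilde t_i}(A)$ and $\Res_{\tilde t_i}(As)=\Res_{\tilde t_i}(A)\,s|_{\tilde t_i}$ together with the inclusions defining $\GGG^0$ and $\GGG^1$. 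In particular the two components $a_{\alpha\beta,1}a_{\beta\gamma,1}+\cdots$ and $\AAA_{\alpha,1}a_{\alpha\beta,1}-a_{\alpha\beta,1}\AAA_{\beta,1}$ of the Yoneda square lie in $\GGG^0$ and $\GGG^1$ respectively, whence $[a_1,\AAA_1]\circ[a_1,\AAA_1]\in\HH^2(X,\GGG^{\fatdot})$. Granting this closedness, the cocycle computation (\ref{gather.7..}) and the vanishing $\check d(F_{\alpha\beta\gamma,k})=0$ of Lemma \ref{lemma 1} carry over unchanged, and the resulting series $f=\sum_{k\geq 2}f_k$ together with the ideal $I$ generated by the image of $f^*$ cuts out the subschemes $V_k\subset W_k$ over which $(\EEE_1,\nabla_1,\{l^{(i)}_*\}_1)$ extends, as claimed.
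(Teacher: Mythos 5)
Your proposal is correct and follows exactly the route the paper intends: the paper gives no explicit proof of this theorem, merely asserting that the construction of Theorem \ref{theorem 1.3} carries over with $\CCC^{\fatdot}$ replaced by $\GGG^{\fatdot}$, and your transcription of that induction is the right move (indeed simpler here, since $\Omega^2_X=0$ on a curve makes $\GGG^{\fatdot}$ a genuine two-term complex with a two-column spectral sequence, just as for $\CCC^{\fatdot}$). The one substantive point the paper leaves implicit --- that $\GGG^0$ is closed under composition, that $\nabla_{\GGG^{\fatdot}}(\GGG^0)\subset\GGG^1$ via $\Res_{\tilde t_i}(\nabla\circ s-s\circ\nabla)=[\Res_{\tilde t_i}(\nabla),s|_{\tilde t_i}]$ being strictly filtration-decreasing, and that $\GGG^0\cdot\GGG^1,\ \GGG^1\cdot\GGG^0\subset\GGG^1$, so all obstruction cochains genuinely take values in $\GGG^{\fatdot}$ --- is exactly what needed checking, and your verification of it is correct.
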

We now want to construct the Kuranishi space of $T$-parabolic bundles.
Let $T$ be a finite set of smooth points $\{P_1,\dots,P_n\}$ of $X$ and $W$ a vector bundle on $X$.
\begin{definition}
By a quasi-parabolic structure on a vector bundle $W$ at a smooth point $P$ of $X$, we mean a choice of a flag
$$W_P=F_1(W)_P\supset F_2(W)_P\supset...\supset F_l(W)_P=0,$$ in the fibre $W_P$ of $W$ at $P$.
A parabolic structure at $P$ is a pair consisting of a flag as above and a sequence $0\leq\alpha_1<\alpha_2<...<\alpha_l<1$ of weights of $W$ at $P$.
\end{definition}
The integers $k_1=\dim F_1(W)_P-\dim F_2(W)_P$,\ldots, $k_l=\dim(F_l(W)_P)$ are called the multiplicities of $\alpha_1,\dots,\alpha_l$.
A $T$-parabolic structure on $W$ is the triple consisting of a flag at $P$, some weights $\alpha_i$, and their multiplicities $k_i$.
A vector bundle $W$ endowed with a $T$-parabolic structure is called a $T$-parabolic bundle. 
\begin{definition}
A $T$-parabolic bundle $W_1$ on $X$ is a $T$-parabolic subbundle of a $T$-parabolic bundle $W_2$ on $X$, if
$W_1$ is a subbundle of $W_2$ and at each smooth point $P$ of $T$, the weights of $W_1$ are a subset of those of $W_2$.
Further, if we take the weight $\alpha_{j_0}$ such that $1\leq j_0\leq m$, and  the weight $\beta_{k_0}$ for the greatest integer $k_0$ such that $F_{j_0}(W_1)_P\subset F_{k_0}(W_2)_P$, then $\alpha_{j_0}=\beta_{k_0}$.
\end{definition}
\begin{definition}
The parabolic degree of a $T$-parabolic vector bundle $W$ on $X$ is 
$$\mathrm{par\deg}(W):=\deg(W)+\sum_{P\in I}\sum_{i=1} ^{r}k_i (P)\alpha_i (P).$$
\end{definition}
\begin{definition}
A $T$-parabolic bundle $W$
is stable (resp. semistable) if
for any proper nonzero $T$-parabolic subbundle $W'\subset W$
the inequality
\begin{gather*} {\mathrm{par\deg} W'}<{(\textrm{resp. $\leq$})}
 \frac{\mathrm{par\deg W} \rk(W')}{\rk W}\end{gather*}
holds.
\end{definition}

We have a forgetful map $g$ from $(t,\lambda)$ parabolic connections to $T$-parabolic bundles.
We thus can construct the Kuranishi space of $T$-parabolic bundles by following an analogous argument
to the one given above. We first introduce the Higgs field $\Phi:\EEE\rar\EEE\otimes\Omega_X^1(D)$ defined as follows:
$$\forall p\in X, \forall f\in\OOO_{X,p}, \forall s\in\EEE_P, \Phi(fs)=f\Phi(s).$$
We afterwards consider a parabolic bundle $\EEE$ with fixed weights and parabolic points $P_1,\dots,P_N$. We set $L=K\otimes\OOO(P_1,\dots,P_N)$, the line bundle associated to the canonical divisor together with the divisor of poles $D=P_1+\dots+P_N$.
The sheaf of rational $1$-forms on $X$ is identified with the sheaf of rational sections of the canonical bundle having single poles at points
$P_1,\dots,P_N$. We replace $t_i$ by $P_i$, for $i=1,\dots,N$ and $M^{\boldsymbol{\alpha}}_{\CCC/T}(\tilde{t},r,d)$ by $M^s_{T}$. 
We define a complex $\BBB^{\fatdot}$ by
\begin{align*}
 \BBB^0&:=\left\{ s\in {\mathcal End}(\tilde{E}) \left|
 \text{$s|_{\tilde{P}_i\times M^{s}_{Z,\CCC/T}(\tilde{P},r,d)}
 (\tilde{l}^{(i)}_j)\subset\tilde{l}^{(i)}_j$ for any $i,j$}
 \right\}\right. \\
 \BBB^1&:=\left\{ s\in {\mathcal End}(\tilde{E})
 \otimes\Omega^1_{\CCC/T}(D(\tilde{Pi})) \left|
 \text{$\Res_{\tilde{P}_i\times M^{s}_{Z,\CCC/T}(\tilde{P},r,d)}(s)
 (\tilde{l}^{(i)}_j)\subset \tilde{l}^{(i)}_{j+1}$ for any $i,j$}
 \right\}\right.\\
 \ad\Phi_{\BBB^{\fatdot}} &: \BBB^0 \lra \BBB^1 ; \quad
 \ad\Phi_{\BBB^{\fatdot}}(s)=\tilde{\Phi}\circ s - s\circ\tilde{\Phi}.
\end{align*}
From this, we deduce the construction of the Kuranishi space of $T$-parabolic bundles on a smooth projective curve.
\begin{theorem}
Let $X$ be a smooth projective curve over $k$ or a complex space (in which case $k=\CC$), $\EEE$  a $T$-parabolic bundle on $X$, $\BBB^{\fatdot}$ the complex of sheaves on $X$ defined as above, $W=\HH^{1}(X,\BBB^{\fatdot})$, $(\delta_1\dots,\delta_N)$ a basis of $W$ and $(t_1,\dots,t_N)$ the dual coordinates on $W$.
Let $W_k$ denote the $k$-th infinitesimal neighborhood of $0$ in $W$, and $\EEE_1$ the universal first order
deformation of $\EEE$ over $X\times W_1$. Then there exists a formal power series
$$f(t_1,\dots,t_N)=\sum_{k=2}^{\infty} f_{k}(t_1\dots,t_N)\in \HH^{2}(X, \BBB^{\fatdot})[[t_1,\dots,t_N]],$$
where $f_k$ is  homogeneous of degree $k$ ($k\geq 2$), with the following property.
Let $I$ be the ideal of $k[[t_1,\dots,t_N]]$ generated by the image of the map $f^*:\HH^{2}(X,\BBB^{\fatdot})^{*}\rar k[[t_1,\dots,t_N]]$, adjoint to $f$.
Then for any $k\geq 2$, $\EEE_1$ extends to a $T$-parabolic bundle $\EEE_k$ on $X\times V_k$, where
$V_k$ is the closed subscheme of $W_k$ defined by the ideal $I\otimes k[[t_1,\dots,t_N]]/(t_1,\dots,t_N)^{k+1}.$
\end{theorem}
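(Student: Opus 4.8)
The plan is to transcribe the proof of Theorem~\ref{theorem 1.3} into the present setting, the only changes being that the sheaves $\mathcal{E}nd(\EEE)$ and $\mathcal{E}nd(\EEE)\otimes\Omega^1(D)$ are replaced by the parabolic sheaves $\BBB^0$ and $\BBB^1$, and that the $k$-linear differential $\nabla_{\mathcal{E}nd(\EEE)}$ is replaced by the $\OOO_X$-linear map $\ad\Phi_{\BBB^{\fatdot}}$. As recalled in the Introduction for the Higgs complex of \cite{B-R}, the data deformed by $\BBB^{\fatdot}$ (the $T$-parabolic bundle together with its Higgs field) has tangent space $T^1=\HH^1(X,\BBB^{\fatdot})=W$ and obstruction space $T^2=\HH^2(X,\BBB^{\fatdot})$, the quadratic initial term of the Kuranishi map being the Yoneda square on $\HH^1(X,\BBB^{\fatdot})$; the space sought is the formal fiber $f^{-1}(0)=\underleftarrow{\lim}\,V_k$.

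First I would analyse the spectral sequence $E_1^{p,q}=H^q(X,\BBB^p)\Rightarrow\HH^{p+q}(X,\BBB^{\fatdot})$. Since $\BBB^{\fatdot}$ is concentrated in the two degrees $p=0,1$, it is supported on two vertical strings exactly as in Figure~\ref{figu} and degenerates at $E_2$, producing a long exact sequence whose connecting maps $d_1$ are induced by $\ad\Phi$. From it I extract the exact triple $0\rar W'\rar W\rar W''\rar 0$ with
$$W'=\frac{H^0(X,\BBB^1)}{\im d_1},\qquad W''=\ker\bigl(H^1(X,\BBB^0)\tto{d_1}H^1(X,\BBB^1)\bigr),$$
and choose dual coordinates $t_1,\dots,t_{N'}$ restricting to $W'$ together with $s_1,\dots,s_{N''}$ restricting to $W''$, precisely as in Theorem~\ref{theorem 1.3}.

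Next I would run the induction on $k$, building the homogeneous forms $G_{\alpha\beta,k}$ that deform the cocycle of $\EEE$ together with its flags $\tilde l^{(i)}_j$, the forms $\Phi_{\alpha,k}$ that deform the local matrices of the Higgs field (playing the role of $A_{\alpha,k}$ in Theorem~\ref{theorem 1.3}), and the obstruction data $F_{\alpha\beta\gamma,k}$, $K_{\alpha\beta,k}$, $f_k$, subject to the obvious analogues of conditions $(i)$--$(vi)$. The essential simplification over the connection case is that $\Phi$ is $\OOO_X$-linear, so its transition law is the pure conjugation $\Phi_\beta=g_{\alpha\beta}^{-1}\Phi_\alpha g_{\alpha\beta}$ and the exterior-derivative contribution $\ud G^{(k+1)}_{\alpha\beta}$ present in (\ref{equation 22}) disappears; throughout the recursion $\ad\Phi_{\BBB^{\fatdot}}$ simply takes over the role formerly played by $\nabla_{\mathcal{E}nd(\EEE)}$. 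The parabolic data require no separate bookkeeping: since the \v Cech complex of $\BBB^{\fatdot}$ computes $\HH^{\fatdot}(X,\BBB^{\fatdot})$, every cochain produced by the chosen cross-sections $\sigma_i$ and $\tau$ takes values in $\BBB^0$ or $\BBB^1$ by construction, hence automatically preserves the filtrations $\tilde l^{(i)}_j$ and the residue conditions defining $\BBB^{\fatdot}$.

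The main obstacle is, as in the earlier theorems, the cocycle verification at each inductive step: one must show that the degree-$(k+1)$ obstruction cochain $((K_{\alpha\beta,k+1}),(F_{\alpha\beta\gamma,k+1}))$ is a $2$-cocycle of $\BBB^{\fatdot}$ modulo the current ideal, so that its class $f_{k+1}\in\HH^2(X,\BBB^{\fatdot})$ is well defined and the extension of $\EEE_k$ to $V_{k+1}$ exists once the image of $f^*$ is divided out. This is the exact analogue of Lemma~\ref{lemma 1} and of the telescoping identity (\ref{gather.7..}); because $\ad\Phi_{\BBB^{\fatdot}}$ is $\OOO_X$-linear, the Leibniz manipulations needed for $\nabla$ collapse and the verification is if anything shorter than there. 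Finally, for each $k\geq 2$ the subscheme $V_k\subset W_k$ is defined by the reduction modulo $(t_1,\dots,t_N)^{k+1}$ of the ideal $I$ generated by the image of $f^*$, and $\underleftarrow{\lim}\,V_k=f^{-1}(0)$ is the formal Kuranishi space of the $T$-parabolic bundle $\EEE$, carrying the formal universal family $\underleftarrow{\lim}\,\EEE_k$.
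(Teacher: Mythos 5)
Your proposal is correct and coincides with the paper's approach: the paper offers no separate proof of this theorem, asserting only that the construction follows by an argument analogous to Theorem \ref{theorem 1.3}, and your transcription --- $\BBB^0,\BBB^1$ in place of $\mathcal{E}nd(\EEE)$ and $\mathcal{E}nd(\EEE)\otimes\Omega^1(D)$, the $\OOO_X$-linear $\ad\Phi$ in place of $\nabla_{\mathcal{E}nd(\EEE)}$ so that the $\ud G^{(k+1)}_{\alpha\beta}$ terms of (\ref{equation 22}) drop out of the recursion --- is exactly that argument. Your two supplementary observations are also the right ones: the deformed object must carry the Higgs field for the two-term complex $\BBB^{\fatdot}$ to be the controlling complex (the theorem's wording is loose on this point), and the parabolic flags need no extra bookkeeping because $\BBB^0$ is closed under products and $\BBB^1$ is a $\BBB^0$-bimodule, so the cochains $F_{\alpha\beta\gamma,k}$ and $K_{\alpha\beta,k}$ produced at each inductive step automatically land in the parabolic subsheaves.
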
 

\subsection*{Acknowledgements}
I am greatly indebted to my former research advisor D.~Markushevich for his continous guidance and help.

\renewcommand\refname{References}

\end{document}